\documentclass[twoside,british,english]{svjour3}
\usepackage[T1]{fontenc}
\synctex=-1
\usepackage{float}
\usepackage{units}
\usepackage{url}
\usepackage{amsmath}
\usepackage{amssymb}
\usepackage{graphicx}
\usepackage[authoryear]{natbib}

\makeatletter
\numberwithin{equation}{section}
\numberwithin{figure}{section}
  \newenvironment{svmultproof}{\begin{proof}}{\qed\end{proof}}

\AtBeginDocument{

}

\makeatother

\usepackage{babel}
  \addto\captionsbritish{}
  \addto\captionsbritish{}
  \addto\captionsbritish{}
  \addto\captionsbritish{}
  \addto\captionsbritish{}
  \addto\captionsbritish{}
  \addto\captionsbritish{}
  \addto\captionsenglish{}
  \addto\captionsenglish{}
  \addto\captionsenglish{}
  \addto\captionsenglish{}
  \addto\captionsenglish{}
  \addto\captionsenglish{}
  \addto\captionsenglish{}

\begin{document}

\title{Approximation and estimation of very small probabilities of multivariate
extreme events}

\titlerunning{Approximation/estimation of very small probabilities of multivariate
extreme events}

\author{Cees de Valk}

\institute{CentER, Tilburg University, P.O. Box 90153, 5000 LE Tilburg, The
Netherlands. Email: \foreignlanguage{british}{C}.F.deValk@uvt.nl/ceesfdevalk@gmail.com}

\date{December 16, 2015}
\maketitle
\begin{abstract}
This article discusses modelling of the tail of a multivariate distribution
function by means of a large deviation principle (LDP), and its application
to the estimation of the probability $p_{n}$ of a multivariate extreme
event from a sample of $n$ \emph{iid} random vectors, with $p_{n}\in[n^{-\tau_{2}},n^{-\tau_{1}}]$
for some $\tau_{1}>1$ and $\tau_{2}>\tau_{1}$. One way to view classical
tail limits is as limits of probability ratios. In contrast, the tail
LDP provides asymptotic bounds or limits for log-probability ratios.
After \foreignlanguage{british}{standardising} the marginals to standard
exponential, dependence is represented by a homogeneous rate function
$I$. Furthermore, the tail LDP can be extended to represent both
dependence and marginals, the latter implying marginal log-GW tail
limits. A connection is established between the tail LDP and residual
tail dependence (or hidden regular variation) and a recent extension
of it. Under a smoothness assumption, they are implied by the tail
LDP. Based on the tail LDP, a simple estimator for very small probabilities
of extreme events is formulated. It avoids estimation of $I$ by making
use of its homogeneity. Strong consistency in the sense of convergence
of log-probability ratios is proven. Simulations and an application
illustrate the difference between the classical approach and the LDP-based
approach.\end{abstract}

\begin{quote}
\textbf{Mathematics Subject Classification (20}1\textbf{0)}: 60F10,
60G70, 62G32
\end{quote}

\section{Introduction\label{sec:Introduction}}

In this article, we will consider estimation of very small probabilities
$p_{n}$ of multivariate extreme events from a sample of size $n$,
with
\begin{equation}
p_{n}\in[n^{-\tau_{2}},n^{-\tau_{1}}]\quad with\quad\tau_{2}>\tau_{1}>1,\label{eq:p_n}
\end{equation}
motivated by applications requiring quantile estimates for $p_{n}\ll1/n$
in \emph{e.g.} flood protection and more generally, natural hazard
assessment, and in operational risk assessment for financial institutions.
Multivariate events with such low probabilities are also relevant
to these fields of application. Examples are breaching of a flood
protection consisting of multiple sections differing in exposure,
design and maintenance along a shoreline or river bank (\citet{Steenbergen}),
damage to an offshore structure caused by the combined effects of
multiple environmental loads like water level, wave height, etc. (\citet{ISO}),
and operational losses suffered by banks in different business lines
and due to various types of events (\citet{Embrechts}). 

Most work on estimation of probabilities of extreme events is based
on the regularity assumption that the distribution function $F$ is
in the domain of attraction of some extreme value distribution function
(\citet{Laurens  boek,Resnick book}). In the univariate case, this
is equivalent to the generalised Pareto (GP) tail limit
\begin{equation}
\lim_{t\rightarrow\infty}t(1-F(xw(t)+U(t))=1/h_{\gamma}^{-1}(x)\quad\forall x\in h_{\gamma}((0,\infty))\label{eq:GP}
\end{equation}
for some positive function $w$, with $U(t):=F^{-1}(1-1/t)$ and
\begin{equation}
h_{\gamma}(\lambda):=\begin{cases}
(\lambda^{\gamma}-1)/\gamma & \textrm{if}\:\gamma\neq0\\
\log\lambda & \textrm{if}\:\gamma=0
\end{cases}\label{eq:h}
\end{equation}
for some $\gamma\in\mathbb{R}$, the extreme value index. In the multivariate
case, with $F$ the distribution function of a random vector $X=(X_{1},..,X_{m})$
with continuous marginals $F_{1},..,F_{m}$, it implies that each
marginal satisfies the GP tail limit (\ref{eq:GP}) and that $V:=(V_{1},..,V_{m})$,
the random vector with standard Pareto marginals with

\begin{equation}
V_{j}:=(1-F_{j}(X_{j}))^{-1}\label{eq:Vdef}
\end{equation}
for $i=1,..,m$, satisfies
\begin{equation}
\lim_{t\rightarrow\infty}tP\left(V\in tA\right)=\nu(A)\label{eq:conv_expmeasure}
\end{equation}
for every Borel set $A\subset[0,\infty)^{m}$ such that $\inf_{x\in A}\max(x_{1},..,x_{m})>0$
and $\nu(\partial A)=0$, with $\nu$ a measure satisfying $\nu(Aa)=a^{-1}\nu(A)$
for all these $A$ and all $a>0$. Based on the GP tail limit and
the exponent measure $\nu$ or its properties, estimators for probabilities
have been formulated; \emph{e.g.} \citet{Smithetal}, \citet{ColesTawn1990,Coles=000026Tawn},
\citet{Joe}, \citet{BruunTawn}, \citet{deHaan=000026Sinha}, \citet{Drees =000026 de Haan}. 

If the maxima of some components of $X$ under consideration are asymptotically
independent, these estimators may produce invalid results. To alleviate
this problem, residual tail dependence (RTD), also known as hidden
regular variation, was introduced as an additional regularity assumption
on the tail of the multivariate survival function $F^{c}$, defined
by $F^{c}(x)=P(X_{i}>x_{i}\:\forall i\in\{1,..,m\})$; \emph{e.g.}
\citet{Ledford=000026Tawn96,Ledford=000026Tawn97,Ledford=000026Tawn98},
\citet{LiPeng}, \citet{Resnick2002}, \citet{Draisma} and \citet{HeffResnick2005}.
This model was recently extended in \citet{Wadsworth=000026Tawn}.
Another approach, based on conditional limits, was proposed in \citet{HeffernanTawn}
and \citet{HeffernanResnick}.

The first-order tail regularity conditions (\ref{eq:GP}) and (\ref{eq:conv_expmeasure})
can be seen as limiting relations for probability ratios, so they
only allow estimation of probabilities $p_{n}$ vanishing slowly enough,
that is,
\begin{equation}
p_{n}\geq\lambda k_{n}/n\label{eq:pn_for_GP}
\end{equation}
for some $\lambda>0$ and some intermediate sequence $(k_{n})$, and
therefore, $np_{n}\rightarrow\infty$ as $n\rightarrow\infty$. For
an \emph{iid} sample, the empirical probability $\hat{p}_{n}$ is
an unbiased estimator for such $p_{n}$, satisfying that $\hat{p}_{n}/p_{n}\overset{p}{\rightarrow}1$
(from the binomial distribution of $n\hat{p}_{n}$). Therefore, estimators
for these $p_{n}$ which make use of tail regularity can at best achieve
a reduction in variance when compared to $\hat{p}_{n}$. To allow
tail extrapolation to be carried further to more rapidly vanishing
$p_{n}$, additional assumptions beyond (\ref{eq:GP}) and (\ref{eq:conv_expmeasure})
are introduced. Initially, \emph{e.g.} in \citet{Smithetal}, \citet{ColesTawn1990,Coles=000026Tawn}
and \citet{Joe}, the tail is assumed to follow the limiting distribution
exactly above some thresholds, so likelihood methods can be employed.
Later, \emph{e.g.} in \citet{deHaan=000026Sinha}, \citet{LiPeng},
\citet{Drees =000026 de Haan}, \citet{Draisma} and \citet{Laurens  boek},
convergence to the limiting distribution and its effect on bias in
estimates is explicitly considered. For the marginals, additional
assumptions on convergence to the limit (\ref{eq:GP}) in these articles
are identical to or stronger than those invoked for univariate quantile
estimation%
\footnote{Common assumptions are strong second-order extended regular variation
as in \emph{e.g.} Theorem 4.3.1(1) of \citet{Laurens  boek} or the
Hall class (\citet{Hall}).%
}. However, the latter appear to be restrictive when $\gamma=0$, regardless
of the precise nature of the assumption; see \citet{De Valk I}, Proposition
1. For example, they exclude the normal and the lognormal distribution,
but also for all $\alpha\in(0,\infty)\setminus\{1\}$ the distribution
functions of $Y^{\alpha}$ with $Y$ exponentially distributed, and
of $\exp((\log V)^{\alpha})$ with $V$ Pareto distributed. 

To overcome these limitations, we will consider a different approach
in this paper. Rather than imposing additional assumptions on convergence
beyond the first-order limits (\ref{eq:GP}) and (\ref{eq:conv_expmeasure}),
we will attempt to replace them by different types of first-order
limits more suitable for the probability range (\ref{eq:p_n}). Suppose
that $(k_{n})$ satisfies $k_{n}\leq n^{c}$ for some $c\in(0,1)$.
Then $(p_{n})$ satisfying (\ref{eq:p_n}) does not satisfy (\ref{eq:pn_for_GP}),
but
\begin{equation}
\tau_{1}\leq\frac{\log p_{n}}{\log(k_{n}/n)}\leq\tau_{2}/(1-c)<\infty.\label{eq:log_pn}
\end{equation}

This suggests that replacing the classical limits of probability ratios
by limits of log-probability ratios could provide a framework for
constructing estimators for probabilities of extreme events in the
range (\ref{eq:p_n}). 

In the next section, we address the limiting behaviour of log-probability
ratios in the univariate case as introduction to the multivariate
case. We will find that this behaviour is described by a large deviation
principle (LDP) (see \emph{e.g.} \citet{Dembo=000026Zeitouni}). It
is generalised to the multivariate setting in Section \ref{sec:Dependence}.
In Section \ref{sec:residual}, we establish a connection between
the tail LDP and residual tail dependence and related assumptions.
Section \ref{sec:estimator} returns to the basic LDP and applies
it to formulate a simple estimator for probabilities of extreme events
in the range (\ref{eq:p_n}) and to prove its consistency. In Section
\ref{sec:Examples}, this estimator is compared to its classical analogues
in simulations, and an application of the LDP-based estimator is presented
as illustration. Section \ref{sec:Discussion} closes with a discussion
of the results and of outstanding issues. Readers primarily interested
in tail dependence could scan Section 2 for the approach and background,
read the first part of Section 3 until eq. (\ref{eq:LDP_mexp_Q}),
and then continue with Sections 4-7. Lemmas can be found in Section
\ref{sec:lemmas}. 

The following notation is adopted: Id denotes the identity. The interior
of a set $S$ is denoted by $S^{o}$ and its closure by $\bar{S}$.
The image of a set $S$ under a function $f$ is written as $f(S)$.
The infimum of an (extended) real function $f$ over $S$ is written
as $\inf f(S)$; by convention, $\inf\{\emptyset\}:=\infty$. To avoid
tedious repetition, expressions of the form $a\leq\liminf_{y\rightarrow\infty}f(y)\leq\limsup_{y\rightarrow\infty}f(y)\leq b$
are abbreviated to $a\leq\liminf_{y\rightarrow\infty}f(y)\leq\limsup_{y\rightarrow\infty}...\leq b$.

\section{\label{sec:univar}Introducing the tail LDP: the univariate case}

We will begin by examining the univariate case in order to become
acquainted with a particular type of large deviation principle (LDP)
as a model of the tail of a distribution function. 

Let $X$ be a real-valued random variable and let $\{b_{y},\: y>0\}$
be a family of real functions such that for $D\subset[0,\infty)$,
$b_{y}(D)$ becomes more extreme in some sense when $y$ is increased.
In line with the classical limits (\emph{e.g.} (\ref{eq:GP})), we
could consider an affine function for $b_{y}$,\emph{ i.e.}, $b_{y}(x)=r(y)+g(y)x,$
with $r$ some nondecreasing function and $g$ some measurable positive
function. Instead, for a reason to be explained later, we will assume
that $F(0)<1$ and consider
\begin{equation}
b_{y}(x)=r(y)\textrm{e}^{g(y)x}\label{eq:By-logX}
\end{equation}
with $g$ and $r$ as above and $r(\infty)>0$. We will examine the
limiting behaviour of
\begin{equation}
\frac{1}{y}\log P(X\in b_{y}(D))\label{eq:logP}
\end{equation}
as $y\rightarrow\infty$. Substituting $y_{n}=-\log(k_{n}/n)$ for
$y$, this determines the behaviour of the log-probability ratio in
(\ref{eq:log_pn}) with $p_{n}=P(X\in b_{y_{n}}(D))$ as $n\rightarrow\infty$.

Generally speaking, normalised logarithms of probabilities like (\ref{eq:logP})
do not need to satisfy limits, so we will only assume that%
\footnote{See the end of Section \ref{sec:Introduction} for the notation employed
here.%
}
\begin{equation}
J(D^{o})\leq\liminf_{y\rightarrow\infty}\frac{1}{y}\log P(X\in b_{y}(D))\leq\limsup_{y\rightarrow\infty}...\leq J(\bar{D})\label{eq:LDP_general}
\end{equation}
for (at least) $D=(x,\infty)$ for all $x\geq0$, with $J$ some monotonic
set function taking values in $[0,\infty]$. Noting that $\varphi(x):=-J((x,\infty))$
is nondecreasing in $x$, we have at every continuity point $x$ of
$\varphi$ in $(0,\infty)$,
\begin{equation}
\lim_{y\rightarrow\infty}\frac{1}{y}\log(1-F(\textrm{e}^{g(y)x}r(y)))=-\varphi(x).\label{eq:a_limit}
\end{equation}

Let $q$ be the left-continuous inverse of $-\log(1-F)$, so
\begin{equation}
q:=F^{-1}(1-\textrm{e}^{-\textrm{Id}})=U\circ\exp.\label{eq:defq}
\end{equation}

Assume that $\varphi$ is not constant. By Lemma 1.1.1 of \citet{Laurens  boek},
(\ref{eq:a_limit}) implies $\lim_{y\rightarrow\infty}(\log q(y\lambda)-\log r(y))/g(y)=\varphi^{-1}(\lambda)$
at every continuity point of the left-continuous inverse $\varphi^{-1}$
of $\varphi$ in $(\varphi(0),\varphi(\infty))$. Therefore (\emph{cf.}
the proof of Theorem 1.1.3 in \citet{Laurens  boek}), we may take
$r=q$ and choose $g$ measurable and such that $\varphi^{-1}(\lambda)=h_{\theta}(\lambda)$
for some real $\theta$ (see (\ref{eq:h})). As a result,
\begin{equation}
\lim_{y\rightarrow\infty}\frac{\log q(y\lambda)-\log q(y)}{g(y)}=h_{\theta}(\lambda)\quad\forall\lambda>0\label{eq:logq-ERV}
\end{equation}
and from (\ref{eq:a_limit}),
\begin{equation}
\lim_{y\rightarrow\infty}\frac{1}{y}\log(1-F(\textrm{e}^{g(y)x}q(y)))=-h_{\theta}^{-1}(x)\quad\forall x\in h_{\theta}((0,\infty)).\label{eq:logGW-in-F}
\end{equation}

Eq. (\ref{eq:logq-ERV}) states that $\log q$ is extended regularly
varying with index $\theta$. By (\ref{eq:logq-ERV}), $\lim_{y\rightarrow\infty}g(y\lambda)/g(y)=\lambda^{\theta}$
for all $\lambda>0$, so $g\in RV_{\theta}$ ($g$ is regularly varying
with index $\theta$); see Appendix B of \citet{Laurens  boek}. Now
(\ref{eq:LDP_general}) can be fully specified:
\begin{proposition}
(a) \label{pro:PropA}Suppose that asymptotic bounds (\ref{eq:LDP_general})
with (\ref{eq:By-logX}) apply to all $D$ of the form $D=(x,\infty)$
with $x\geq0$, with $J$ monotonic and $x\mapsto J((x,\infty))$
non-constant. Then $g$ in (\ref{eq:By-logX}) can be chosen such
that (\ref{eq:LDP_general}) holds with $r=q$ and $J=-\inf h_{\theta}^{-1}(\textrm{Id})$
for some $\theta\in\mathbb{R}$ for every Borel set $D\subset[0,\infty)$,
i.e.,
\[
-\inf h_{\theta}^{-1}(D^{o})\leq\liminf_{y\rightarrow\infty}\frac{1}{y}\log P\left(\frac{\log X-\log q(y)}{g(y)}\in D\right)
\]
\begin{equation}
\leq\limsup_{y\rightarrow\infty}...\leq-\inf h_{\theta}^{-1}(\bar{D});\label{eq:LDP-log-GW}
\end{equation}

(b) Eq. (\ref{eq:LDP-log-GW}) is equivalent to (\ref{eq:logq-ERV}),
which is equivalent to (\ref{eq:logGW-in-F}).\end{proposition}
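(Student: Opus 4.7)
The plan is to follow the derivation already sketched in the paragraphs preceding the statement, then to extend the bounds from half-lines to arbitrary Borel sets, and finally to collect the equivalences in part (b).

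For part (a), the hypothesis on $D=(x,\infty)$ combined with monotonicity of $J$ gives (\ref{eq:a_limit}) at every continuity point of $\varphi(x):=-J((x,\infty))$ in $(0,\infty)$, via the sandwich that identifies both $J((x,\infty))$ and $J([x,\infty))$ with $-\varphi(x)$ at such points. Applying Lemma 1.1.1 of \citet{Laurens  boek} to the nondecreasing function $-\log(1-F)\circ\exp$ (whose left-continuous inverse is $\log q$) converts (\ref{eq:a_limit}) into $(\log q(y\lambda)-\log r(y))/g(y)\to\varphi^{-1}(\lambda)$ at continuity points of $\varphi^{-1}$ in $(\varphi(0),\varphi(\infty))$. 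Taking $\lambda=1$ shows $\log q(y)-\log r(y)=O(g(y))$, so replacing $r$ by $q$ and subtracting the constant $\varphi^{-1}(1)$ produces the extended regular variation of $\log q$; the standard characterisation (cf.~the proof of Theorem 1.1.3 in \citet{Laurens  boek}) then forces the limit to be $c\,h_\theta$ for some $c>0$ and $\theta\in\mathbb{R}$, and absorbing $c$ into $g$ yields (\ref{eq:logq-ERV}), whence (\ref{eq:logGW-in-F}) by inversion.

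The remaining step in part (a) is to promote (\ref{eq:logGW-in-F}) to arbitrary Borel $D\subset[0,\infty)$. Since $g(y)>0$ eventually, $x\mapsto b_y(x)$ is strictly increasing, so $b_y(D)\subset[b_y(\inf\bar D),\infty)$; hence for any $\epsilon>0$,
\[
P(X\in b_y(D))\leq P(X>b_y(\inf\bar D-\epsilon)),
\]
and (\ref{eq:logGW-in-F}) together with continuity of $h_\theta^{-1}$ on its domain yield $\limsup\leq-h_\theta^{-1}(\inf\bar D)=-\inf h_\theta^{-1}(\bar D)$ after $\epsilon\downarrow0$. For the lower bound, pick $x_1,x_2$ with $\inf D^o<x_1<x_2$ and $(x_1,x_2)\subset D^o$; then
\[
P(X\in b_y(D^o))\geq (1-F(b_y(x_1)))-(1-F(b_y(x_2))),
\]
and the second term is exponentially negligible compared to the first since $h_\theta^{-1}(x_1)<h_\theta^{-1}(x_2)$, giving $\liminf\geq -h_\theta^{-1}(x_1)$; sending $x_1\downarrow\inf D^o$ and using continuity of $h_\theta^{-1}$ produces $\liminf\geq-\inf h_\theta^{-1}(D^o)$.

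For part (b), restricting (\ref{eq:LDP-log-GW}) to $D=(x,\infty)$ at any continuity point of $h_\theta^{-1}$ immediately gives (\ref{eq:logGW-in-F}); Lemma 1.1.1 of \citet{Laurens  boek} applied in both directions with $\varphi^{-1}=h_\theta$ provides the two-way equivalence (\ref{eq:logGW-in-F})$\Leftrightarrow$(\ref{eq:logq-ERV}); and the Borel-set extension above supplies (\ref{eq:logGW-in-F})$\Rightarrow$(\ref{eq:LDP-log-GW}). I expect the most delicate point to be pinning down the limit in (\ref{eq:logq-ERV}) to exactly $h_\theta$ rather than an arbitrary affine rescaling of it: this step relies on the rigidity of extended regularly varying functions and on the specific normalising choices $r=q$ and $g\mapsto cg$ that kill, respectively, the constant shift and the multiplicative scale.
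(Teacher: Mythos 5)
Your proposal is correct and follows essentially the same route as the paper's proof: part (a) is obtained by repeating the derivation of (\ref{eq:logq-ERV}) and (\ref{eq:logGW-in-F}) from the half-line bounds given in the text preceding the proposition, and then extending (\ref{eq:logGW-in-F}) to arbitrary Borel $D\subset[0,\infty)$ by squeezing a small subinterval of $D^{o}$ (for the lower bound) and a half-line containing $\bar{D}$ (for the upper bound), exactly as the paper does with its $\alpha,\delta,\varepsilon$ parametrisation. The only substantive differences are presentational: you spell out the upper bound that the paper omits as ``similar'', and you make the normalising step ($r=q$, absorbing the multiplicative constant into $g$ to force the limit to be $h_{\theta}$) explicit where the paper merely cites Theorem 1.1.3 of de Haan \& Ferreira; neither change alters the argument.
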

\begin{svmultproof}
We have proven that (\ref{eq:LDP_general}) for $D=(x,\infty)$ implies
the equivalent limit relations (\ref{eq:logq-ERV}) and (\ref{eq:logGW-in-F}),
so it remains to be shown that (\ref{eq:logGW-in-F}) implies (\ref{eq:LDP-log-GW})
for every Borel set $D\subset[0,\infty)$. The lower bound holds if
$D^{o}$ is empty. Else, with $\alpha:=\inf h_{\theta}^{-1}(D^{o})\geq0$
and $\delta>0$ such that $(h_{\theta}(\alpha),h_{\theta}(\alpha+\delta)]\subset D^{o}$
and for every $\varepsilon\in(0,\delta/2)$, $P((\log X-\log q(y))/g(y)\in D^{o})$
$\geq F(\textrm{e}^{g(y)h_{\theta}(\alpha+\delta)}q(y))-F(\textrm{e}^{g(y)h_{\theta}(\alpha)}q(y))$
$\geq\textrm{e}^{-y(\alpha+\varepsilon)}-\textrm{e}^{-y(\alpha+\delta-\varepsilon)}$
$\geq\textrm{e}^{-y(\alpha+\varepsilon)}(0\vee1-\textrm{e}^{-y(\delta-2\varepsilon)})$,
provided that $y$ is large enough, as a consequence of (\ref{eq:logGW-in-F}).
As $\delta>0$ is arbitrary, this implies the lower bound in (\ref{eq:LDP-log-GW}).
The proof of the upper bound is similar and is therefore omitted. 
\end{svmultproof}

The pair of equivalent limit relations (\ref{eq:logq-ERV}) and (\ref{eq:logGW-in-F})
was named the log-Generalised Weibull (log-GW) tail limit in \citet{De Valk I},
where it was proposed as a model for estimating high quantiles for
probabilities in the range (\ref{eq:p_n}), as an alternative to the
more familiar GP tail limit. If $\theta=0$ and $g(y)\rightarrow g_{\infty}>0$
as $y\rightarrow\infty$, it reduces to the Weibull tail limit; see
\emph{e.g.} \citet{Broniatowski} and \citet{Kl=0000FCppelberg}. 

The log-GW tail limit looks deceptively similar to a GP tail limit,
but it is a very different beast, primarily due to the logarithm in
(\ref{eq:logGW-in-F}) (or equivalently, due to the exponent in (\ref{eq:defq})).
Its domain of attraction covers a wide range of tail weights: a class
of light tails having finite endpoints, tails with Weibull limits
(such as the normal distribution), all tails with classical Pareto
tail limits and, more generally, with log-Weibull tail limits. For
the latter, $F\circ\exp$ satisfies a Weibull tail limit; an example
is the lognormal distribution. For estimation of high quantiles with
probabilities (\ref{eq:p_n}) of distribution functions within the
domain of attraction of the GP limit with $\gamma=0$, the log-GW
tail limit offers a continuum of limits instead of just one; as a
consequence, it is much more widely applicable (see \citet{De Valk I}).
Readers more comfortable with classical tail limits may consider focusing
on tails with a Pareto tail limit ($\gamma>0)$, which have a log-GW
limit with $\theta=1$ (so $h_{\theta}(\lambda)=\lambda-1$) and $g(y)=\gamma y$.
This may make reading of the rest of the article easier. 

An expression of the form (\ref{eq:LDP-log-GW}) is an example of
a large deviation principle%
\footnote{An LDP on a topological space $\mathcal{T}$ is an expression of the
form (\ref{eq:LDP-log-GW}) with

$P((\log X-\log q(y))/g(y)\in D)$ generalised to $\mu_{y}(D)$, with
$\{\mu_{y},\: y>0\}$ some family of probability measures on the Borel
$\sigma$-algebra, and $h_{\theta}^{-1}$ generalised to some rate
function (= lower semicontinuous function) $I$; the expression is
supposed to hold for every Borel set $D$ in $\mathcal{T}$.%
} (LDP); see Section 1.2 of \citet{Dembo=000026Zeitouni} for a general
background. The \emph{rate function} of the LDP (\ref{eq:LDP-log-GW})
is $h_{\theta}^{-1}$. The bounds provided by an LDP are crude; for
example, they are unaffected by multiplying the probability in (\ref{eq:LDP-log-GW})
by a positive number. One could see this as the price to be paid for
approximating probabilities over a very wide range. More precise bounds
may exist, but such cases should be regarded as the exception rather
than the rule. Observe also that the bounds do not involve integration
and in fact, most of $D$ does not even matter to the values of the
bounds. The LDP (\ref{eq:LDP-log-GW}) reduces to a limit only if
$D$ satisfies $\inf h_{\theta}^{-1}(D^{o})=\inf h_{\theta}^{-1}(\bar{D})$;
such a $D$ is called a \emph{continuity set} of the rate function.

Had we considered events of the form $b_{y}(x)=r(y)+g(y)x$ instead
of (\ref{eq:By-logX}), then in the same way as above, we would have
arrived at a different tail limit, the GW limit defined by replacing
$\log q$ by $q$ in (\ref{eq:logq-ERV}) (see \citet{De Valk I}).
Its domain of attraction covers a much more limited range of tail
weights. Furthermore, if $F(0)<1$, then the GW limit implies a log-GW
limit (\emph{cf.} the proof of Lemma 3.5.1 in \citet{Laurens  boek}).
Therefore, to ensure that the results of this article are sufficiently
widely applicable, we focus on the log-GW limit. 

The events considered in (\ref{eq:LDP-log-GW}) with $D\subset[0,\infty)$
imply that $X$ is in the interval%
\footnote{Depending on $\theta$, we can extend this somewhat to $[q(y)\textrm{e}^{-cg(y)},\infty)$
for some $c>0$; see (\ref{eq:logGW-in-F}) %
} $[q(y),\infty)$ for $q(y)>0$. In a multivariate setting, it would
be desirable to extend this interval to $\mathbb{R}$, since a multivariate
event could be extreme in one variable, but not in some other variable.
This can be accomplished using a trick: define an approximation $\tilde{q}_{y}$
of $q$ (see (\ref{eq:defq})) for $y\in q^{-1}((0,\infty))$ by
\begin{equation}
\tilde{q}_{y}(z):=\begin{cases}
q(z) & \textrm{if}\: z\leq y\\
q(y)\textrm{e}^{g(y)h_{\theta}(z/y)} & \textrm{if}\: z>y,
\end{cases}\label{eq:q_curl}
\end{equation}
so for $z>y$, $\tilde{q}_{y}(z)$ is the log-GW tail approximation;
for $z\leq y$, it is exact. 

A random variable $Y$ with the standard exponential distribution
satisfies
\begin{equation}
-\inf A^{o}\leq\liminf_{y\rightarrow\infty}\frac{1}{y}\log P(Y\in Ay)\leq\limsup_{y\rightarrow\infty}...\leq-\inf\bar{A}\label{eq:LDP_Y}
\end{equation}
for every Borel set $A\subset[0,\infty)$, which can be proven in
a similar manner as Proposition \ref{pro:PropA}. If $F$ is continuous,
then $Y=-\log(1-F(X))$ has the standard exponential distribution
and $q$ is increasing, so we can substitute $P(X\in q(Ay))$ for
$P(Y\in Ay)$ in (\ref{eq:LDP_Y}). Under the assumptions of Proposition
\ref{pro:PropA}, we can substitute $P(X\in\tilde{q}_{y}(Ay))$ for
$P(Y\in Ay)$ in (\ref{eq:LDP_Y}) as well, extending (\ref{eq:LDP-log-GW})
to
\begin{equation}
-\inf A^{o}\leq\liminf_{y\rightarrow\infty}\frac{1}{y}\log P(X\in\tilde{q}_{y}(Ay))\leq\limsup_{y\rightarrow\infty}...\leq-\inf\bar{A}:\label{eq:LDP_univar_GWmixed}
\end{equation}

\begin{proposition}
\label{pro:PropC}(a) If $F$ is continuous, then (\ref{eq:LDP-log-GW})
for every Borel set $D\subset[0,\infty)$, (\ref{eq:logq-ERV}) and
(\ref{eq:logGW-in-F}) are all equivalent to (\ref{eq:LDP_univar_GWmixed})
for every Borel set $A\subset[0,\infty)$.\end{proposition}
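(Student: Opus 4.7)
Plan: The plan is to leverage Proposition~\ref{pro:PropA}(b), which already gives the mutual equivalence of (\ref{eq:LDP-log-GW}), (\ref{eq:logq-ERV}) and (\ref{eq:logGW-in-F}); hence it remains only to show (\ref{eq:LDP-log-GW}) $\Leftrightarrow$ (\ref{eq:LDP_univar_GWmixed}). The key tool is the quantile transform: under continuity of $F$, $Y := -\log(1-F(X))$ is standard exponential, and the change of variables $u = F(x)$ gives $P(X \in q(S)) = P(Y \in S)$ for every Borel $S \subset [0,\infty)$ (continuity of $F$ ensures that the atoms of $q$ arising from flat segments of $F$ carry zero $X$-probability). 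Combined with the exponential LDP (\ref{eq:LDP_Y}), this handles the regime $z \leq y$ of $\tilde{q}_{y}$, on which $\tilde{q}_{y} = q$; for $z > y$, the log-GW form $\tilde{q}_{y}(z) = q(y)\exp(g(y)h_\theta(z/y))$ together with the homeomorphism $h_\theta:[1,\infty) \to h_\theta([1,\infty))$ turns (\ref{eq:LDP-log-GW}) into bounds on events $\{X \in \tilde{q}_{y}(Ay)\}$ with $A \subset [1,\infty)$.

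For the direction (\ref{eq:LDP-log-GW}) $\Rightarrow$ (\ref{eq:LDP_univar_GWmixed}), split $A = A_{-} \cup A_{+}$ with $A_{-} := A \cap [0,1]$ and $A_{+} := A \cap (1,\infty)$. The images $\tilde{q}_{y}(A_{-}y) \subseteq [0,q(y)]$ and $\tilde{q}_{y}(A_{+}y) \subseteq (q(y),\infty)$ are disjoint, so their probabilities add; the first equals $P(Y \in A_{-}y)$ by the quantile transform, and the second equals $P((\log X - \log q(y))/g(y) \in h_\theta(A_{+}))$ and is controlled by (\ref{eq:LDP-log-GW}). On the logarithmic scale the sum of two probabilities behaves like their maximum, so $\limsup \frac{1}{y}\log P(X \in \tilde{q}_{y}(Ay)) \leq \max(-\inf \overline{A_{-}}, -\inf \overline{A_{+}}) = -\inf \bar A$. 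For the liminf, pick $\alpha' \in A^o$ close to $\inf A^o$ and a small open interval about $\alpha'$ contained in $A^o$; if this interval straddles $\lambda = 1$, replace it by a sub-interval lying entirely in $[0,1]$ or in $(1,\infty)$. Applying the relevant LDP (exponential or log-GW) to the sub-interval yields $\liminf \frac{1}{y}\log P(X \in \tilde{q}_{y}(Ay)) \geq -\alpha'$, and then $\alpha' \downarrow \inf A^o$. For the converse, given Borel $D \subset [0,\infty)$ set $A := h_\theta^{-1}(D) \subseteq [1,\infty)$ (when $\theta<0$, truncate $D$ to $h_\theta([1,\infty)) = [0,-1/\theta)$; the discarded part has $h_\theta^{-1} = \infty$ and so contributes trivially). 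Because $A \subseteq [1,\infty)$, the event $\{X \in \tilde{q}_{y}(Ay)\}$ coincides with $\{(\log X - \log q(y))/g(y) \in D\}$, so (\ref{eq:LDP_univar_GWmixed}) applied to this $A$ translates into (\ref{eq:LDP-log-GW}) via the homeomorphism-induced identities $\inf A^o = \inf h_\theta^{-1}(D^o)$ and $\inf \bar A = \inf h_\theta^{-1}(\bar D)$.

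The main technical obstacle is the forward limsup when $A$ has $\lambda = 1$ on its boundary: one must verify that pasting the exponential LDP onto the log-GW LDP loses no information at the junction. This holds because both LDPs give rate $1$ at $\lambda = 1$ (the exponential rate function is the identity, and the log-GW rate function is $h_\theta^{-1} \circ h_\theta = \textrm{Id}$), so the combined rate function is continuous there. A secondary subtlety is the identity $P(X \in q(S)) = P(Y \in S)$ on arbitrary Borel $S$, which depends on continuity of $F$ through the fact that atoms of $q$ carry zero $X$-probability.
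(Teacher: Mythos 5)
Your proof is correct and uses essentially the same machinery as the paper's: Proposition~\ref{pro:PropA}(b) for the mutual equivalence of the three univariate limits, then the exponential LDP (\ref{eq:LDP_Y}) on the regime $z\le y$ (via the quantile transform $P(X\in q(S))=P(Y\in S)$) and the log-GW LDP on $z>y$. The only cosmetic difference is that you decompose $A=A_-\cup A_+$ and argue via $\limsup\frac1y\log(a_y+b_y)=\max(\cdot,\cdot)$ whereas the paper does a case split on whether $A^o\cap(-\infty,1)$ is empty; and for the converse you map directly to (\ref{eq:LDP-log-GW}) via $A=h_\theta^{-1}(D)$ rather than recovering (\ref{eq:logGW-in-F}) from $A=[\lambda,\infty)$ -- both routes land in the same place by Proposition~\ref{pro:PropA}(b).
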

\begin{svmultproof}
Equivalence of (\ref{eq:logq-ERV}), (\ref{eq:logGW-in-F}) and (\ref{eq:LDP-log-GW})
follows from Proposition \ref{pro:PropA}(b). If $A^{o}\cap(-\infty,1)$
is nonempty, then $P(X\in\tilde{q}_{y}(Ay))\geq P(Y\in(A^{o}\cap(-\infty,1))y)$
and the lower bound in (\ref{eq:LDP_univar_GWmixed}) follows from
(\ref{eq:LDP_Y}). If not, then by (\ref{eq:q_curl}), $P(X\in\tilde{q}_{y}(Ay))\geq P(X\in\tilde{q}_{y}(A^{o}y))=P((\log X-\log q(y))/g(y)\in h_{\theta}(A^{o}))$
with $h_{\theta}(A^{o})\subset[0,\infty)$, so Proposition \ref{pro:PropA}
implies the lower bound in (\ref{eq:LDP_univar_GWmixed}). The upper
bound is proven similarly. To show that (\ref{eq:LDP_univar_GWmixed})
implies (\ref{eq:logGW-in-F}) for $x\in h_{\theta}((1,\infty))$,
take $A=[\lambda,\infty)$ for $\lambda\geq1$; it can be extended
to $x\in h_{\theta}((0,\infty))$ by a standard argument. 
\end{svmultproof}

When restricting $A$ to $[1,\infty)$, (\ref{eq:LDP_univar_GWmixed})
is equivalent to (\ref{eq:LDP-log-GW}) for $D=h_{\theta}^{-1}(A)$.
With $A$ in $[0,\infty)$, therefore, (\ref{eq:LDP_univar_GWmixed})
provides the intended generalisation of (\ref{eq:LDP-log-GW}). Note
that the log-GW index $\theta$ and auxiliary function $g$ are now
hidden in the approximation $\tilde{q}_{y}$ in (\ref{eq:q_curl}).
However, they are as essential in (\ref{eq:LDP_univar_GWmixed}) as
they are in the more explicit (\ref{eq:LDP-log-GW}).

\section{\label{sec:Dependence}Bounds and limits for probabilities of multivariate
tail events}

For the univariate tail, we obtained the LDP (\ref{eq:LDP_univar_GWmixed})
in a form which closely resembles (\ref{eq:LDP_Y}) for the standard
exponential distribution. This suggests that for a multivariate generalisation,
we examine first the case of a random vector $Y:=(Y_{1},..,Y_{m})$
with distribution function having standard exponential marginals.
A straightforward multivariate generalisation of the LDP (\ref{eq:LDP_Y})
would be
\begin{equation}
-\inf I(A^{o})\leq\liminf_{y\rightarrow\infty}\frac{1}{y}\log P(Y/y\in A)\leq\limsup_{y\rightarrow\infty}...\leq-\inf I(\bar{A})\label{eq:LDP_mexp}
\end{equation}
for every Borel set $A\subset[0,\infty)^{m}$, with $I$ some rate
function; we may regard (\ref{eq:LDP_mexp}) as the analogue of the
classical expression (\ref{eq:conv_expmeasure}). Further on, we will
prove that (\ref{eq:LDP_mexp}) holds if
\begin{equation}
I(x):=-\inf_{\varepsilon>0}\liminf_{y\rightarrow\infty}\frac{1}{y}\log P(Y/y\in B_{\varepsilon}(x))=-\inf_{\varepsilon>0}\limsup_{y\rightarrow\infty}\frac{1}{y}\log P(Y/y\in B_{\varepsilon}(x)),\label{eq:weakLDP}
\end{equation}
with $B_{\varepsilon}(x):=\{x'\in\mathbb{R}^{m}:\:\bigl\Vert x-x'\bigr\Vert_{\infty}<\epsilon\}$
the open ball of radius $\varepsilon>0$ with centre $x\in\mathbb{R}^{m}$.
For now, we turn to the rate function $I$, defined by (\ref{eq:weakLDP})
as some kind of limiting density, with the probability of an open
ball replaced by its logarithm. Several properties of $I$ follow
immediately from (\ref{eq:weakLDP}) and the exponential marginals
of $Y$. For every $\varepsilon>0$ and $x\in\mathbb{R}^{m}$ with
$x_{j}=\lambda>0$ for some $j\in\{1,..,m\}$, $\frac{1}{y}\log P(Y/y\in B_{\varepsilon}(x))\leq\frac{1}{y}\log P(Y_{j}/y>(\lambda-\varepsilon))=\varepsilon-\lambda$,
so
\begin{equation}
I(x)\geq\max_{j\in\{1,..,m\}}x_{j}\quad\forall x\in\mathbb{R}^{m}.\label{eq:I_lb}
\end{equation}

This implies that $I$ is a \textit{good} rate function, meaning that
$I^{-1}([0,a])$ is compact for every $a\in[0,\infty)$. Also, since
$B_{\varepsilon}(x\lambda)=\lambda B_{\varepsilon/\lambda}(x)$,
\begin{equation}
I(x\lambda)=\lambda I(x)\quad\forall\lambda>0,\: x\in\mathbb{R}^{m}.\label{eq:I_mexp_symm}
\end{equation}

Furthermore, $I(0)=0$, since $P(\left\Vert Y\right\Vert _{\infty}\leq y\varepsilon)\geq1-mP(Y_{1}>\varepsilon y)=1-me^{-\varepsilon y}$
in (\ref{eq:weakLDP}), and $I(x)=\infty$ whenever $\min(x_{1},..,x_{m})<0$.
\begin{remark}
\label{rem:phi}By (\ref{eq:I_mexp_symm}), $I(x)=\varrho(x)I(x/\varrho(x))$
for every $x\in\mathbb{R}^{m}\setminus\{0\}$ and every norm $\varrho$
on $\mathbb{R}^{m}$. This gives for every norm a ``spectral representation''
of $I$, analogous to the spectral measures in classical extreme value
theory (\emph{e.g.} \citet{Laurens  boek}, Section 6.1.4). For example,
in the bivariate case, the rate function can be represented on $[0,\infty)^{2}\setminus\{0\}$
by $I(x)=(x_{1}+x_{2})\psi(x_{2}/(x_{1}+x_{2}))$ with $\psi(t):=I(1-t,t)$
for $t\in[0,1]$, so by (\ref{eq:I_lb}), it satisfies $\psi(t)\geq\max(t,1-t)$
for all $t\in[0,1]$. The similarity of $\psi$ to the dependence
function $A$ of \citet{Pickands-dep} may be misleading, as a rate
function defined by (\ref{eq:weakLDP}) and a distribution function
are very different objects. Besides satisfying $A(t)\geq\max(t,1-t)$
for all $t\in[0,1]$, Pickands' function $A$ is convex, and $A(1)=A(0)=1$.
These latter conditions do not need to apply to $\psi$. \end{remark}
\begin{example}
\label{exa:biv-normal}Let $X\sim\mathcal{N}(0,V)$ with $V$ an $m\times m$
positive-definite matrix with unit diagonal; let $W:=V^{-1}$. Then
\[
I(x)=\Sigma_{j,i\in\{1,..,m\}}w{}_{ji}\sqrt{x_{i}x_{j}},\qquad x\in[0,\infty)^{m}.
\]

In the bivariate case with $v_{12}=v_{21}=:\rho$, $I(x)=(x_{1}+x_{2}-2\rho\sqrt{x_{1}x_{2}})/(1-\rho^{2})$,
so $\psi(t)=(1-2\rho\sqrt{t(1-t)})/(1-\rho^{2})$. If $\rho>0$, then
$I$ is convex and therefore, $\psi$ is convex. Figure \ref{fig:bivar_normal}.1
shows contour plots of $I$ for $\rho=0.8$ (left) and $\rho=0.2$
(middle). On the right, the function $\psi$ is plotted for these
two values of $\rho$; for both, $\psi(1)=\psi(0)>1$.
\end{example}
\begin{figure}
\label{fig:bivar_normal}\includegraphics[scale=0.27]{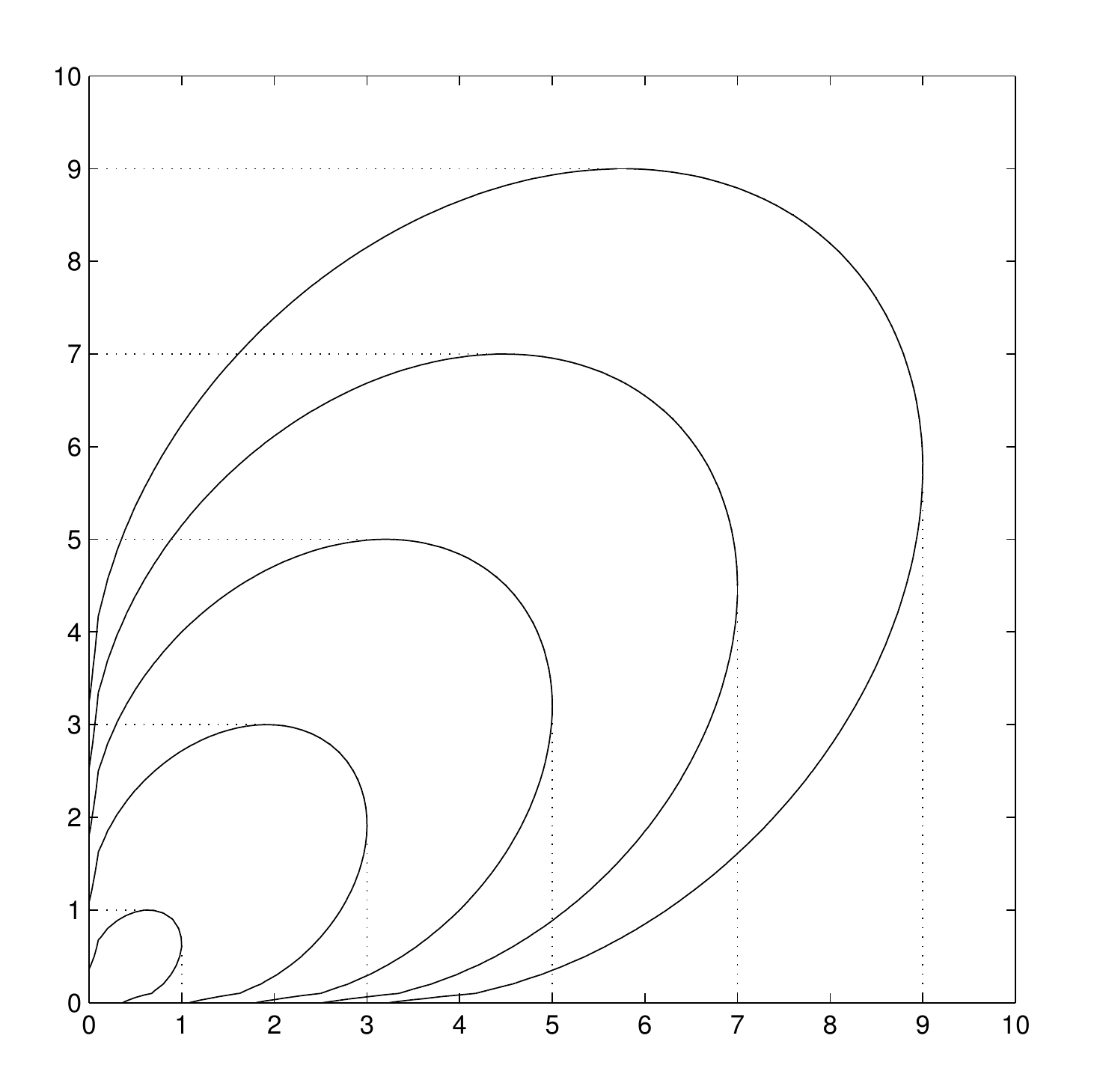}\includegraphics[scale=0.27]{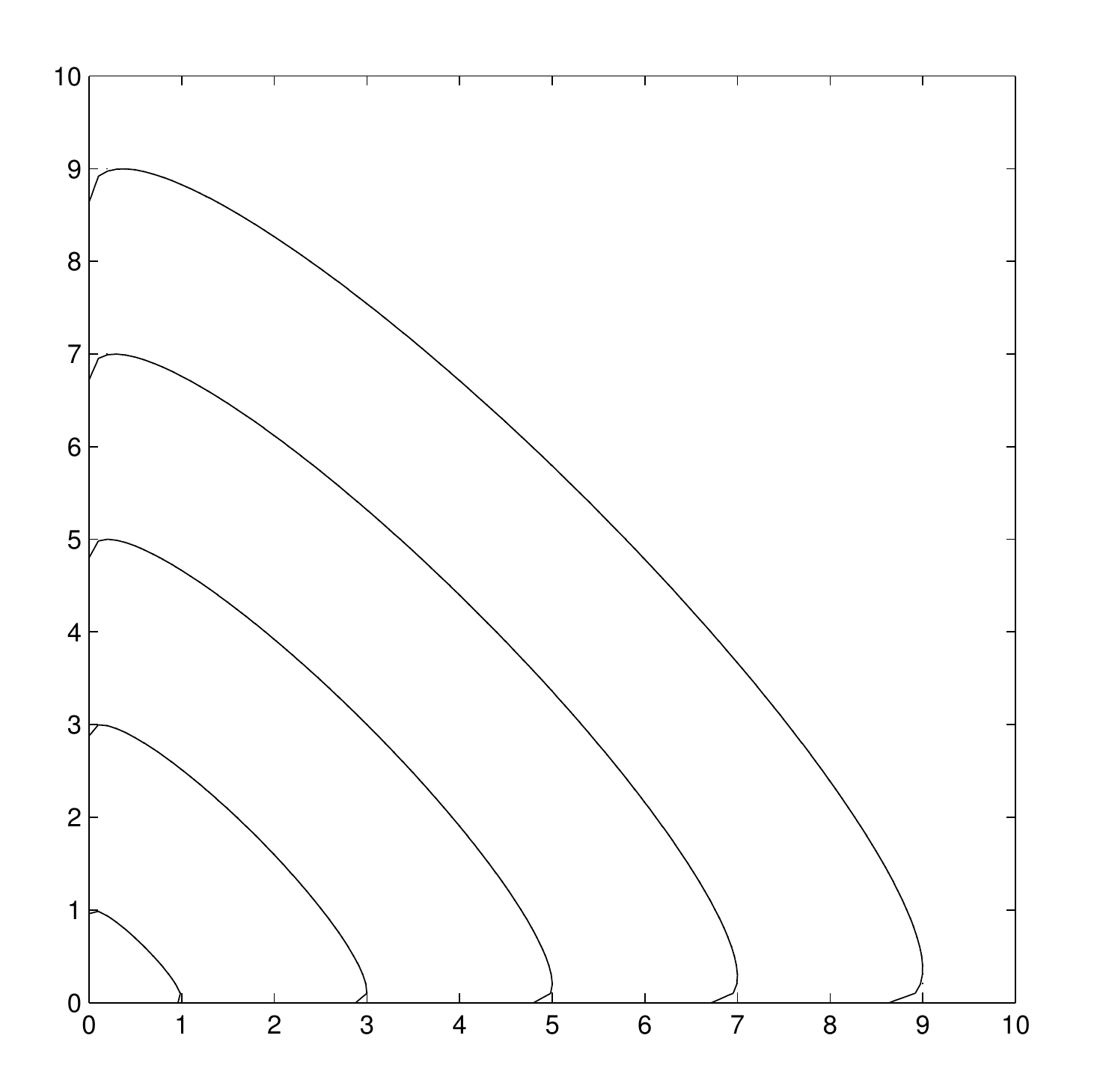}\includegraphics[scale=0.27]{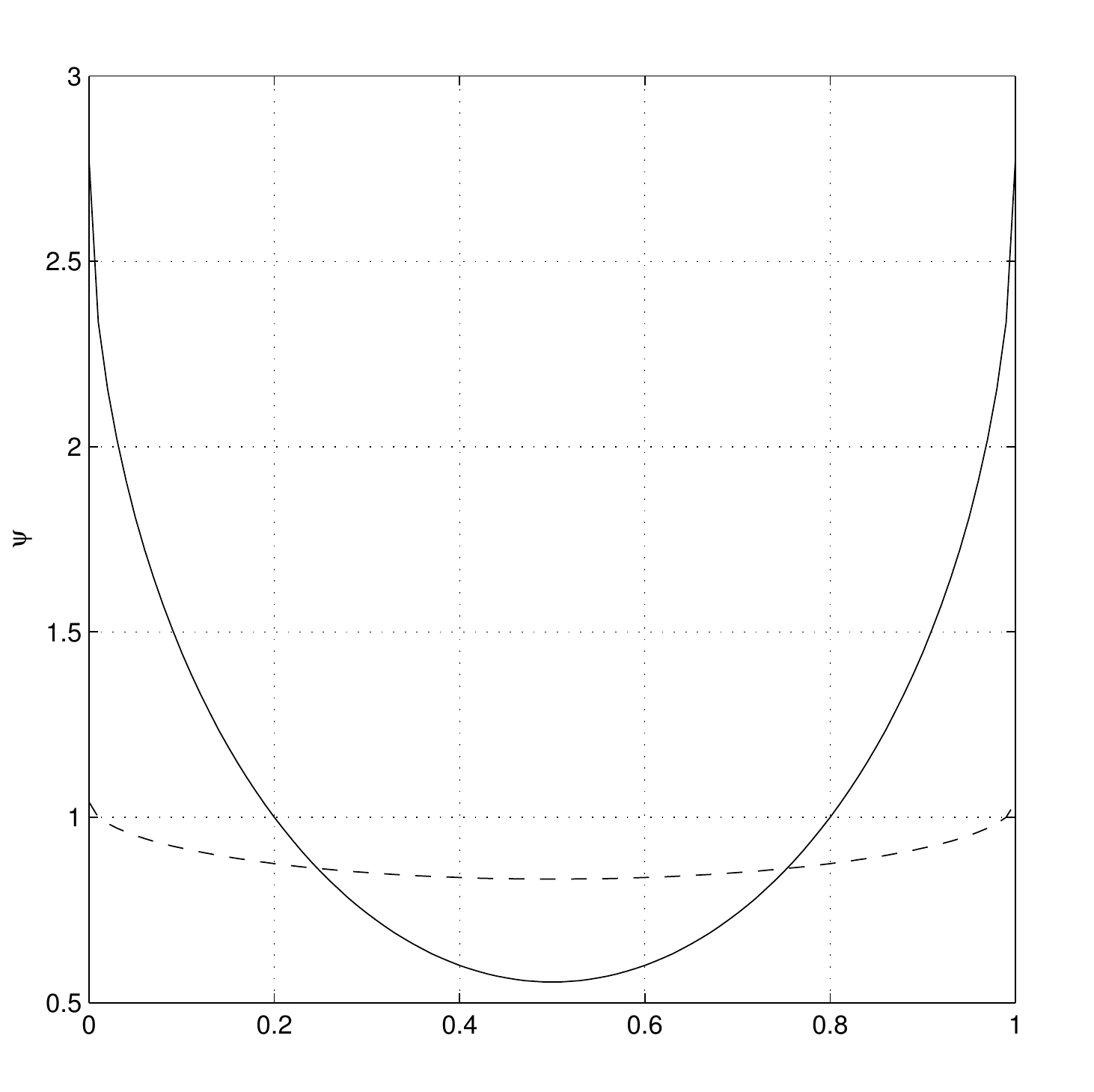}

\protect\caption{Left and middle: contours of the rate function $I$ (drawn) and function
$\kappa$ (dotted; see Section \ref{sec:residual}) for the bivariate
normal distribution with standard marginals and $\rho=0.8$ (left)
and $\rho=0.2$ (middle). Right: function $\psi$ (see text) for $\rho=0.8$
(drawn) and $\rho=0.2$ (dashed).}
\end{figure}

\begin{remark}
If $\lim_{y\rightarrow\infty}P(\min_{j=1,..,m}Y_{j}>y)/P(Y_{1}>y)>0$,
then $I(\mathit{1})=1$ with $\mathit{1}=(1,..,1)$; the converse
is not true. In the bivariate case, $I(\mathit{1})=1$ implies that
$\psi({\scriptstyle \frac{1}{2}})={\scriptstyle \frac{1}{2}}$ for
$\psi$ in Remark \ref{rem:phi}. It does not fix $\psi(t)$ at other
$t\in[0,1]$. For example, for the bivariate normal distribution with
standard marginals (see Example \ref{exa:biv-normal}) and with $\rho=1$,
$\psi(t)=\infty$ for all $t\in[0,1]\setminus\{{\scriptstyle \frac{1}{2}}\}$,
but for a positive mixture of this distribution function with a similar
one with $\rho=r<1$, $\psi(t)=(1-2r\sqrt{t(1-t)})/(1-r^{2})$ for
all $t\in[0,1]\setminus\{{\scriptstyle \frac{1}{2}}\}$.
\end{remark}

\begin{remark}
If $I$ is subadditive, then by (\ref{eq:I_mexp_symm}), it is convex,
and furthermore, by (\ref{eq:I_lb}), it is a norm. Again, this condition
does not need to be satisfied in general. 
\end{remark}
Since $P(Y_{1}>y\alpha)\leq P(\left\Vert Y\right\Vert _{\infty}>y\alpha)\leq mP(Y_{1}>y\alpha)$,
\begin{equation}
\lim_{y\rightarrow\infty}\frac{1}{y}\log P(\left\Vert Y\right\Vert _{\infty}/y>\alpha)=-\alpha\quad\forall\alpha\geq0,\label{eq:sup_limit}
\end{equation}
so the tail of the maximum of $Y_{1},..,Y_{m}$ satisfies the same
limit relation as the tails of each of $Y_{1},..,Y_{m}$ individually.
This implies that the family of probability measures corresponding
to the random variables $\{Y/y,\: y>0\}$ is \emph{exponentially tight}
(\citet{Dembo=000026Zeitouni}): for every $\alpha<\infty$, a compact
$E_{\alpha}\subset\mathbb{R}^{m}$ exists such that
\begin{equation}
\limsup_{y\rightarrow\infty}\frac{1}{y}\log P(Y/y\in E_{\alpha}^{c})<-\alpha,\label{eq:exp_tight}
\end{equation}
which follows from (\ref{eq:sup_limit}) when taking $E_{\alpha}=\{x\in\mathbb{R}^{m}:\:\left\Vert x\right\Vert _{\infty}\leq\alpha+\varepsilon\}$
for some $\varepsilon>0$. As a consequence, 
\begin{theorem}
\label{thm:ldp_exp}If the random vector $Y:=(Y_{1},..,Y_{m})$ with
standard exponential marginals satisfies (\ref{eq:weakLDP}), then
it satisfies (\ref{eq:LDP_mexp}) for all Borel $A\subset[0,\infty)^{m}$
with good rate function $I$ satisfying (\ref{eq:I_mexp_symm}), $I(0)=0$
and the marginal condition
\begin{equation}
\inf_{x\in\mathbb{R}^{m}:\: x_{j}>\lambda}I(x)=\lambda\quad\forall\lambda\geq0,\: j\in\{1,..,m\}.\label{eq:I_marg}
\end{equation}
\end{theorem}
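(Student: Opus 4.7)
The plan is to assemble the theorem from three ingredients already prepared before its statement: the pointwise identity (\ref{eq:weakLDP}), which is precisely a ``weak LDP'' at every $x$ with rate $I$; the exponential tightness (\ref{eq:exp_tight}) obtained from (\ref{eq:sup_limit}); and the homogeneity (\ref{eq:I_mexp_symm}) together with the exponential marginals of $Y$. The full LDP then emerges from the standard upgrade of a weak LDP under exponential tightness (see, e.g., \citet{Dembo=000026Zeitouni}, Lemma 1.2.18): (\ref{eq:weakLDP}) equates the pointwise lower and upper local rates, so (\ref{eq:LDP_mexp}) holds for every Borel $A\subset\mathbb{R}^m$, and hence in particular on $[0,\infty)^m$ since $I\equiv\infty$ outside the non-negative orthant.

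At the same time I would verify that $I$ qualifies as a \emph{good} rate function. Lower semicontinuity is immediate: if $x'\to x$ and $\varepsilon''>0$ is fixed, then for $x'$ close enough to $x$ and any sufficiently small $\varepsilon>0$, $B_\varepsilon(x')\subset B_{\varepsilon''}(x)$, whence $-I(x')\leq\liminf_{y\to\infty}\tfrac{1}{y}\log P(Y/y\in B_{\varepsilon''}(x))$; taking the infimum over $\varepsilon''$ yields $\liminf_{x'\to x}I(x')\geq I(x)$. Combined with (\ref{eq:I_lb}), every sublevel set $\{I\leq a\}$ is closed and contained in the bounded box $\{x\in[0,\infty)^m:\max_j x_j\leq a\}$, hence compact. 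Homogeneity (\ref{eq:I_mexp_symm}) and $I(0)=0$ are already established in the text preceding the theorem.

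The only remaining piece is the marginal condition (\ref{eq:I_marg}). The inequality $\inf_{x_j>\lambda}I(x)\geq\lambda$ is immediate from (\ref{eq:I_lb}). For the reverse, consider the closed set $C_\lambda:=\{x\in\mathbb{R}^m:x_j\geq\lambda\}$ with $\lambda>0$. The upper bound of the now-established LDP and the identity $P(Y/y\in C_\lambda)=P(Y_j\geq\lambda y)=e^{-\lambda y}$ give $\inf I(C_\lambda)\leq\lambda$, and goodness of $I$ ensures the infimum is attained at some $x^*\in C_\lambda$. If $x^*_j>\lambda$ we are done; if $x^*_j=\lambda$, homogeneity yields, for every $\alpha>1$, $I(\alpha x^*)=\alpha I(x^*)\leq\alpha\lambda$ with $(\alpha x^*)_j=\alpha\lambda>\lambda$, so letting $\alpha\downarrow 1$ delivers $\inf_{x_j>\lambda}I(x)\leq\lambda$. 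The case $\lambda=0$ follows by letting $\lambda\downarrow 0$ in the result just obtained. The only real subtlety is this last boundary case $x^*_j=\lambda$, which is handled neatly by homogeneity once the LDP is in hand.
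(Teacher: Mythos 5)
Your proof follows the same route as the paper: invoke the weak LDP implied by (\ref{eq:weakLDP}) (the paper cites Theorem 4.1.11 of \citet{Dembo=000026Zeitouni} here), upgrade to a full LDP via exponential tightness (\ref{eq:exp_tight}) and Lemma 1.2.18 of \citet{Dembo=000026Zeitouni}, and then deduce (\ref{eq:I_marg}) from the LDP together with the exponential marginals; the paper leaves the marginal condition and lower semicontinuity of $I$ as unspoken consequences, which you spell out. One minor simplification: rather than applying the LDP upper bound to the closed set $\{x:x_j\geq\lambda\}$ and then invoking goodness and homogeneity to handle the boundary case $x^*_j=\lambda$, you can apply the LDP lower bound directly to the open set $\{x:x_j>\lambda\}$ (whose probability is exactly $e^{-\lambda y}$ for a standard exponential marginal), giving $\inf_{x_j>\lambda}I(x)\leq\lambda$ in one step.
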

\begin{svmultproof}
By Theorem 4.1.11 in \citet{Dembo=000026Zeitouni}, (\ref{eq:weakLDP})
implies the weak LDP, \emph{i.e}., the lower bound in (\ref{eq:LDP_mexp})
holds for all Borel $A$, and the upper bound of (\ref{eq:LDP_mexp})
holds for all compact $A$. Because of exponential tightness (\ref{eq:exp_tight}),
this implies the LDP (\ref{eq:LDP_mexp}); see Lemma 1.2.18 in \citet{Dembo=000026Zeitouni}.
Then (\ref{eq:I_marg}) follows from (\ref{eq:LDP_mexp}) and the
exponential marginals of $Y$.\end{svmultproof}

\begin{remark}
(\ref{eq:I_lb}) is implied by (\ref{eq:I_marg}). 
\end{remark}
For a continuity set of $I$ satisfying that $\inf I(\bar{A})=\inf I(A^{o})$,
the bounds in (\ref{eq:LDP_mexp}) reduce to a limit:
\begin{equation}
\lim_{y\rightarrow\infty}\frac{1}{y}\log P(Y\in Ay)=-\inf I(A)\quad\forall\lambda>0.\label{eq:LDL}
\end{equation}

A sufficient condition for a set $A$ to be a continuity set of $I$
is that $I$ is continuous and $A\subset\overline{A^{o}}$. Homogeneity
(\ref{eq:I_mexp_symm}) of $I$ allows us to relax this condition:
without assuming continuity of $I$, $A$ is a continuity set if $\inf I(\bar{A})=I(x)$
for some $x\in\overline{A^{o}}\cap\cup_{\lambda>0}(\lambda A^{o})$
($\cup_{\lambda>0}(\lambda A^{o})$ is the smallest cone containing
$A^{o}$). A bivariate example is sketched in Figure \ref{fig:cset}.2.
Let $A^{o}$ be the grey set; if $I$ attains its infimum over $\bar{A}$
on the part of its boundary drawn as a fat line (excluding the points
indicated by circles), then $A$ is a continuity set. In the remainder
of this article, we will discuss continuity sets of rate functions
without considering the particular conditions which make them so.
\begin{figure}[H]
\begin{raggedright}
\includegraphics[scale=0.25]{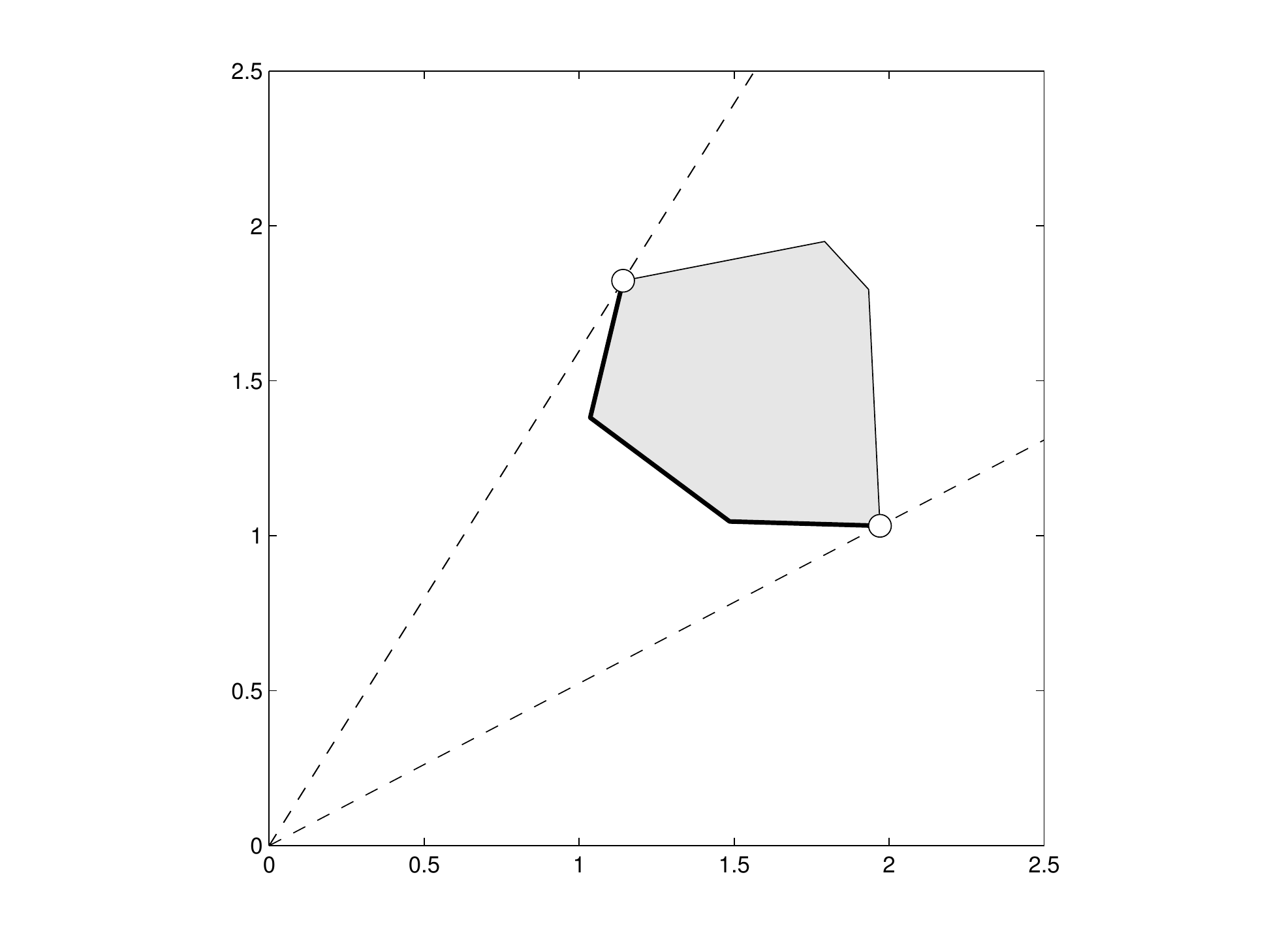}\label{fig:cset}
\par\end{raggedright}

\raggedright{}\protect\caption{Illustration of a continuity set of $I$ (see main text)}
\end{figure}

It is straightforward to extend Theorem \ref{thm:ldp_exp} to a random
vector $X$ with a distribution function $F$ having continuous marginals
$F_{1},..,F_{m}$. As in (\ref{eq:defq}), let for $i=1,..,m$,
\begin{equation}
q_{i}:=F_{i}^{-1}(1-e^{-\textrm{Id}})\label{eq:defq_i}
\end{equation}
and for every $x\in[0,\infty)^{m}$,
\begin{equation}
Q(x):=(q_{1}(x_{1}),..,q_{m}(x_{m})).\label{eq:Qdef}
\end{equation}

Let $Y:=(Y_{1},..,Y_{m})$ with for $j=1,..,m$,
\begin{equation}
Y_{j}:=-\log(1-F_{j}(X_{j})),\label{eq:Ydef}
\end{equation}
so $Y=Q^{-1}(X).$ Because $F_{1},..,F_{m}$ are continuous, $Y$
has exponential marginals. Almost surely, $X=Q(Y)$ with $Q$ defined
by (\ref{eq:Qdef}) and (\ref{eq:defq_i}). Since $Q$ is injective,
$P(X\in Q(yA))=P(Y\in yA)$, so (\ref{eq:LDP_mexp}) is equivalent
to 
\begin{equation}
-\inf I(A^{o})\leq\liminf_{y\rightarrow\infty}\frac{1}{y}\log P(X\in Q(yA))\leq\limsup_{y\rightarrow\infty}...\leq-\inf I(\bar{A}).\label{eq:LDP_mexp_Q}
\end{equation}

Having obtained a multivariate version of (\ref{eq:LDP_Y}), we are
now ready to generalise the univariate tail LDP (\ref{eq:LDP-log-GW})
and its extension (\ref{eq:LDP_univar_GWmixed}) to the multivariate
context. Concerning the latter, one would expect its multivariate
generalisation to be like (\ref{eq:LDP_mexp_Q}), with $Q$ replaced
by an approximation. Let $F_{1},..,F_{m}$ satisfy log-GW tail limits
with scaling functions $g_{1},..,g_{m}$ and log-GW indices $\theta_{1},..,\theta_{m}$,
respectively. As in (\ref{eq:q_curl}), define marginal quantile approximations
\begin{equation}
\tilde{q}_{j,y}(z):=\begin{cases}
q_{j}(z) & \textrm{if}\: z\leq y\\
q_{j}(y)e^{g_{_{j}}(y)h_{\theta_{j}}(z/y)} & \textrm{if}\: z>y
\end{cases}\label{eq:qtilde-def}
\end{equation}
for $y\in\cap_{j=1}^{m}q_{j}^{-1}((0,\infty))$, and let for all $x=(x_{1},..,x_{m})\in[0,\infty)^{m}$,
\begin{equation}
\tilde{Q}_{y}(x):=(\tilde{q}_{1,y}(x_{1}),..,\tilde{q}_{m,y}(x_{m})).\label{eq:Qtilde_def}
\end{equation}

\begin{theorem}
\noindent \label{thm:ldp_complete}Let the random vector $X=(X_{1},..,X_{m})$
have distribution function $F$ with continuous marginals $F_{1},..,F_{m}$
having positive endpoints. 

\noindent (a) If $Y$ defined by (\ref{eq:Ydef}) satisfies (\ref{eq:weakLDP})
and the marginals satisfy log-GW tail limits (\ref{eq:logq-ERV})
with $q=q_{j}$, $g=g_{j}$ and $\theta=\theta_{j}$ for $j=1,..,m$,
then $X$ satisfies
\begin{equation}
-\inf I(A^{o})\leq\liminf_{y\rightarrow\infty}\frac{1}{y}\log P(X\in\tilde{Q}_{y}(yA))\leq\limsup_{y\rightarrow\infty}...\leq-\inf I(\bar{A})\label{eq:LDP_mexp_approx}
\end{equation}

\noindent for every Borel set $A\subset[0,\infty)^{m}$, with $\tilde{Q}_{y}$
given by (\ref{eq:qtilde-def}) and (\ref{eq:Qtilde_def}), and $I$
a good rate function satisfying (\ref{eq:I_mexp_symm}), (\ref{eq:I_marg})
and $I(0)=0.$

\noindent (b) If $X$ satisfies (\ref{eq:LDP_mexp_approx}) for every
Borel set $A\subset[0,\infty)^{m}$ with $\tilde{Q}_{y}$ given by
(\ref{eq:qtilde-def}) and (\ref{eq:Qtilde_def}) and with rate function
$I$ satisfying (\ref{eq:I_marg}), then the marginals satisfy log-GW
tail limits, and $Y$ defined by (\ref{eq:Ydef}) satisfies (\ref{eq:LDP_mexp})
with good rate function $I$ satisfying (\ref{eq:I_mexp_symm}), (\ref{eq:I_marg})
and $I(0)=0.$
\end{theorem}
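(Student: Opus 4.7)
The plan is to transfer the LDP (\ref{eq:LDP_mexp}) for $Y$, provided by Theorem~\ref{thm:ldp_exp}, to an LDP for $X$ by means of a componentwise change of variables that, thanks to the marginal log-GW tail limits, reduces asymptotically to the identity.

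For part~(a), continuity of the $F_j$ and positivity of the endpoints make each $q_j$ strictly increasing, so $Q$ is almost surely a bijection from the support of $Y$ onto that of $X$, and $P(X\in\tilde{Q}_y(yA))=P(Y\in\pi_y(yA))$ with $\pi_y:=Q^{-1}\circ\tilde{Q}_y$ acting componentwise. Writing $\psi_{j,y}(\lambda):=q_j^{-1}(\tilde{q}_{j,y}(y\lambda))/y$, one has $\psi_{j,y}(\lambda)=\lambda$ for $\lambda\le 1$, while for $\lambda>1$, $\psi_{j,y}(\lambda)$ is implicitly defined by $(\log q_j(y\psi_{j,y}(\lambda))-\log q_j(y))/g_j(y)=h_{\theta_j}(\lambda)$. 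Combining (\ref{eq:logq-ERV}) with the uniform convergence theorem for extended regularly varying functions (Appendix~B of \citet{Laurens  boek}) yields $\psi_{j,y}\to\mathrm{Id}$ uniformly on compact subsets of $[0,\infty)$; by monotonicity, the coordinatewise inverse $\psi_y^{-1}$ of $\psi_y:=(\psi_{1,y},\ldots,\psi_{m,y})$ does the same, and $\pi_y(yA)/y=\psi_y(A)$.

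The lower bound in (\ref{eq:LDP_mexp_approx}) follows immediately: for any open $V$ with compact closure contained in $A^o$, $\psi_y^{-1}(V)\subset A^o$ for large $y$, so $\psi_y(A)\supset V$, and (\ref{eq:LDP_mexp}) gives $\liminf y^{-1}\log P(X\in\tilde{Q}_y(yA))\ge-\inf I(V)$; optimising over such $V$ yields the bound $-\inf I(A^o)$. For the upper bound, fix $\alpha$ large, let $E_\alpha$ be the compact furnished by exponential tightness (\ref{eq:exp_tight}), and split $P(X\in\tilde{Q}_y(yA))\le P(Y/y\in\psi_y(\bar A)\cap E_\alpha)+P(Y/y\in E_\alpha^c)$. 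On $E_\alpha$, uniform convergence places $\psi_y(\bar A)\cap E_\alpha$ inside the closed $\varepsilon$-thickening of $\bar A\cap E_\alpha'$, where $E_\alpha'$ is a slightly enlarged compact, for all large $y$; applying (\ref{eq:LDP_mexp}), sending $\varepsilon\downarrow 0$ (using lower semicontinuity of $I$ and compactness of its sublevel sets to ensure $\inf I((\bar A\cap E_\alpha')^{(\varepsilon)})\uparrow\inf I(\bar A\cap E_\alpha')\ge\inf I(\bar A)$), and then $\alpha\uparrow\infty$ completes the bound. The asserted properties of $I$ are inherited from Theorem~\ref{thm:ldp_exp}.

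For part~(b), applying (\ref{eq:LDP_mexp_approx}) to $A=\{x\in[0,\infty)^m:x_j>\lambda\}$ for each $j$ and each $\lambda>0$ collapses both bounds to $-\lambda$ by (\ref{eq:I_marg}) combined with $I(x)\ge x_j$; reading off $\tilde{q}_{j,y}$ this is exactly the log-GW tail limit (\ref{eq:logGW-in-F}) for $F_j$ with scale $g_j$ and index $\theta_j$. Once the marginals are known to be log-GW, the construction of $\pi_y$ in part~(a) becomes available and the argument runs in reverse: (\ref{eq:LDP_mexp_approx}) is equivalent to (\ref{eq:LDP_mexp_Q}), hence to (\ref{eq:LDP_mexp}) by injectivity of $Q$, and Theorem~\ref{thm:ldp_exp} supplies the remaining properties of $I$. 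The main technical obstacle throughout is the bookkeeping in the upper bound, where exponential tightness, uniform-on-compacts convergence of $\psi_y$, and the good-rate-function property must be interleaved so that the $\varepsilon$-thickening of $\bar A$ collapses back to $\bar A$ without inflating $\inf I(\bar A)$.
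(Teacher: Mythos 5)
Your argument is correct and rests on the same idea as the paper's: write $P(X\in\tilde{Q}_y(yA))=P(Y/y\in\psi_y(A))$ with $\psi_y:=Q^{-1}\tilde{Q}_y(y\,\cdot)/y$, use the locally uniform form of the log-GW marginal limits (B.1.4/B.2.9 of de Haan and Ferreira) to get $\psi_y\to\mathrm{Id}$ uniformly on compacts, and transfer the LDP for $Y/y$ across this near-identity reparametrisation. Where you diverge from the paper is in how the transfer is accomplished. The paper defines $\tilde{Y}_y:=\tilde{Q}_y^{-1}(X)$, notes that the uniform convergence plus (\ref{eq:sup_limit}) gives $\limsup_{y\to\infty}y^{-1}\log P(\|\tilde{Y}_y-Y\|_\infty>\delta y)=-\infty$ for every $\delta>0$, i.e.\ exponential equivalence of $\{\tilde{Y}_y/y\}$ and $\{Y/y\}$, and then invokes Theorem 4.2.13 of \citet{Dembo=000026Zeitouni} to conclude they share the same LDP. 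You instead re-derive the lower and upper bounds directly: the lower bound by approximating $A^o$ from inside with relatively compact open $V$ so that $\psi_y(A)\supset V$, and the upper bound by an exponential-tightness split plus an $\varepsilon$-thickening that you shrink using lower semicontinuity and goodness of $I$ (which is Lemma 4.1.6 of \citet{Dembo=000026Zeitouni}, the same tool the paper uses elsewhere). In effect you prove the relevant case of Theorem 4.2.13 from scratch; the paper's invocation of that theorem is shorter, while your version is more self-contained and makes the interaction between exponential tightness and the rate-function topology explicit. For part (b) both arguments extract the marginal log-GW limits from (\ref{eq:I_marg}) by testing $A=\{x:x_j>\lambda\}$ and then re-run the transfer in reverse; you note correctly that $(\ref{eq:I_marg})$ forces $\inf I(\bar{A})=\inf I(A^o)=\lambda$ so both bounds collapse. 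One small presentational caveat: the injectivity and near-surjectivity of $\psi_y$ on compacts, which you use freely, deserve a sentence for $\theta_j<0$ (finite right endpoint), where $\tilde{q}_{j,y}$ could overshoot $q_j(\infty)$; the uniform convergence (\ref{eq:GW_K_unif}) confines this to coordinates outside the compact range under consideration, so it is harmless, but a reader might stumble there.
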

The proof can be found in Subsection \ref{sub:proof_ldp_complete}.
\begin{remark}
This theorem justifies viewing (\ref{eq:LDP_mexp}) as representation
of tail dependence within the context of the LDP (\ref{eq:LDP_mexp_approx}),
which also represents the marginal tails. The relationship between
the LDPs (\ref{eq:LDP_mexp_approx}) and (\ref{eq:LDP_mexp}) is the
large deviations analogue of a similar relationship in classical extreme
value theory; compare \emph{e.g.} \citet{Resnick book}, Propositions
5.10 and 5.15. 
\end{remark}
From the multivariate generalisation of (\ref{eq:LDP_univar_GWmixed}),
we can now also derive a multivariate version of (\ref{eq:LDP-log-GW}),
equivalent to the restriction of (\ref{eq:LDP_mexp_approx}) to $A\subset[1,\infty)^{m}$:
\begin{corollary}
\label{cor:ldp_complete}Let $\theta:=(\theta_{1},..,\theta_{m})$
and $H_{\theta}(z):=(h_{\theta_{1}}(z_{1}),..,h_{\theta_{m}}(z_{m}))$
for all $z\in(0,\infty)^{m}$. Then (\ref{eq:LDP_mexp_approx}) implies
for every Borel set $D\in[0,\infty)^{m}$:
\[
-\inf I(H_{\theta}^{-1}(D^{o}))
\]
\begin{equation}
\leq\liminf_{y\rightarrow\infty}\frac{1}{y}\log P\left(\left(\frac{\log X_{1}-\log q_{1}(y)}{g_{1}(y)},..,\frac{\log X_{m}-\log q_{m}(y)}{g_{m}(y)}\right)\in D\right)\label{eq:LDP_mexp_tail}
\end{equation}
\[
\leq\limsup_{y\rightarrow\infty}...\leq-\inf I(H_{\theta}^{-1}(\bar{D})).
\]
\end{corollary}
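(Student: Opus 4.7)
The plan is to reduce (\ref{eq:LDP_mexp_tail}) to the LDP (\ref{eq:LDP_mexp_approx}) applied with $A := H_\theta^{-1}(D) \subset [1,\infty)^m$. Two things need to be verified: the event on the left of (\ref{eq:LDP_mexp_tail}) coincides with $\{X \in \tilde{Q}_y(y A)\}$, and the infima of $I$ appearing in (\ref{eq:LDP_mexp_approx}) match those in (\ref{eq:LDP_mexp_tail}) after this substitution.

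For the event identity, fix $d \in D \subset [0,\infty)^m$ and set $a := H_\theta^{-1}(d)$. Then $a_j = h_{\theta_j}^{-1}(d_j) \geq 1$, so $y a_j \geq y$, and (\ref{eq:qtilde-def}) yields $\tilde{q}_{j,y}(y a_j) = q_j(y)\mathrm{e}^{g_j(y) h_{\theta_j}(a_j)} = q_j(y)\mathrm{e}^{g_j(y) d_j}$. Since $z \mapsto \tilde{q}_{j,y}(z)$ is strictly increasing on $[y,\infty)$, the correspondence $d_j \leftrightarrow X_j = q_j(y)\mathrm{e}^{g_j(y) d_j}$ is bijective between $d_j \in [0,\infty)$ and $X_j \in [q_j(y),\infty)$. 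Values $X_j < q_j(y)$ produce negative transformed coordinates and therefore cannot lie in $D \subset [0,\infty)^m$, so they contribute to neither side. Consequently,
\[
\{X \in \tilde{Q}_y(y H_\theta^{-1}(D))\} = \left\{\left(\frac{\log X_1 - \log q_1(y)}{g_1(y)}, \ldots, \frac{\log X_m - \log q_m(y)}{g_m(y)}\right) \in D\right\}.
\]

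For the infima, each $h_{\theta_j}^{-1}$ is strictly increasing and continuous on its open domain, so $H_\theta^{-1}$ is a homeomorphism from a (relatively) open subset $U \subset \mathbb{R}^m$ onto $(0,\infty)^m$, carrying $U \cap [0,\infty)^m$ onto $[1,\infty)^m$. Since $[1,\infty)^m$ is closed in $\mathbb{R}^m$ and a positive distance away from $\partial(0,\infty)^m$, $H_\theta^{-1}$ sends open sets to sets that are open in $\mathbb{R}^m$ and closed sets to sets closed in $\mathbb{R}^m$. Hence $H_\theta^{-1}(D^o) \subset (H_\theta^{-1}(D))^o$ and $\overline{H_\theta^{-1}(D)} = H_\theta^{-1}(\bar D)$, yielding $\inf I((H_\theta^{-1}(D))^o) \leq \inf I(H_\theta^{-1}(D^o))$ and $\inf I(\overline{H_\theta^{-1}(D)}) = \inf I(H_\theta^{-1}(\bar D))$. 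Inserting these into (\ref{eq:LDP_mexp_approx}) with $A = H_\theta^{-1}(D)$, and combining with the event identity above, delivers (\ref{eq:LDP_mexp_tail}).

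The only genuine obstacle is the domain issue when some $\theta_j < 0$: then $h_{\theta_j}^{-1}$ is defined only on $(-\infty, -1/\theta_j)$, so one implicitly restricts $D$ (and its closure, for the upper bound) to $\prod_j \mathrm{dom}(h_{\theta_j}^{-1})$. Checking consistency at $d_j = 0$, where both branches of (\ref{eq:qtilde-def}) give $\tilde{q}_{j,y}(y) = q_j(y)$, is also routine. Apart from this bookkeeping, the result is a transparent change of variables under a homeomorphism.
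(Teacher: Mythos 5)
Your proof is correct and reaches the same two key ingredients as the paper's: the event identity $\{X\in\tilde{Q}_{y}(yA)\}=\bigl\{\bigl((\log X_{1}-\log q_{1}(y))/g_{1}(y),\ldots\bigr)\in H_{\theta}(A)\bigr\}$ for $A\subset[1,\infty)^{m}$, and the topological compatibility of $H_{\theta}^{-1}$ with interiors and closures. The route you take afterward is genuinely different from the paper's, however. The paper, having established the event identity, invokes the contraction principle (Theorem 4.2.1 in Dembo \& Zeitouni) to push the LDP for $\tilde{Y}_{y}/y$ forward through $H_{\theta}$ and reads (\ref{eq:LDP_mexp_tail}) off the transferred LDP. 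You instead substitute $A=H_{\theta}^{-1}(D)$ directly into (\ref{eq:LDP_mexp_approx}) and verify by hand that $H_{\theta}^{-1}(D^{o})\subset(H_{\theta}^{-1}(D))^{o}$ and $\overline{H_{\theta}^{-1}(D)}\subset H_{\theta}^{-1}(\bar{D})$, so that the infima in (\ref{eq:LDP_mexp_approx}) dominate those required by (\ref{eq:LDP_mexp_tail}). Your argument is more elementary and self-contained, and it quietly sidesteps the fact that $H_{\theta}$ is a homeomorphism only on $(0,\infty)^{m}$ and does not extend continuously to $[0,\infty)^{m}$ when some $\theta_{j}\leq0$ --- a point the contraction principle glosses over and which your restriction $D\subset[0,\infty)^{m}\Rightarrow H_{\theta}^{-1}(D)\subset[1,\infty)^{m}$ handles explicitly. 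The contraction-principle route buys a full LDP with rate function $I\circ H_{\theta}^{-1}$ on the image space, which is stronger than needed here. Two small remarks: the equality $\overline{H_{\theta}^{-1}(D)}=H_{\theta}^{-1}(\bar{D})$ you assert is stronger than needed (and may fail for $D$ touching the boundary of the range of $H_{\theta}$ when some $\theta_{j}<0$), but only the inclusion $\subset$ is used, and that one holds; and $H_{\theta}^{-1}$ should throughout be read as a preimage, so no explicit restriction of the domain of $D$ is required --- points of $D$ outside the range of $H_{\theta}$ simply contribute nothing.
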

\begin{proof}
See Subsection \ref{sub:proof_ldp_complete}. 
\end{proof}

Note that (\ref{eq:LDP_mexp_tail}) only addresses events within $(q_{1}(y),\infty)\times..\times(q_{m}(y),\infty)$,
which is ``covered'' by all marginal log-GW tail approximations
simultaneously. Just as (\ref{eq:LDP-log-GW}), it can be extended
somewhat. However, the main interest of (\ref{eq:LDP_mexp_tail})
is that it shows the multivariate tail LDP explicitly as a pair of
asymptotic bounds for the probabilities of extreme events defined
in terms of affinely normalised logarithms of the components of $X$.
For applications in statistics, (\ref{eq:LDP_mexp_approx}) should
be more useful, as it applies also to events which are not simultaneously
extreme in every component of $X$.

\section{\label{sec:residual} A connection to residual tail dependence and
related models}

In this section, we digress from the main storyline to examine an
interesting connection between the theory of Section \ref{sec:Dependence}
and earlier work on residual tail dependence (RTD) or hidden regular
variation, introduced in \citet{Ledford=000026Tawn96,Ledford=000026Tawn97,Ledford=000026Tawn98}
and studied in depth in \citet{Resnick2002}, amongst others. In the
bivariate case, RTD offers a model of tail dependence within the classical
domain of asymptotic independence of component-wise maxima (\emph{e.g.}
\citet{Laurens  boek}, Section 7.6). For a random vector $X$ on
$\mathbb{R}^{m}$ with continuous marginals $F_{1},...,F_{m}$, defining
the random vector $V:=(V_{1},..,V_{m})$ with standard Pareto-distributed
variables by (\ref{eq:Ydef}), one way to describe RTD is that for
some positive function $S$ on $(0,\infty)^{m}$,

\begin{equation}
\lim_{t\rightarrow\infty}\frac{P(V_{j}>tx_{j}\;\forall j\in\{1,..,m\})}{P(V_{j}>t\;\forall j\in\{1,..,m\})}=:S(x)>0\label{eq:rtd}
\end{equation}
for all $x\in(0,\infty)^{m}$. The limiting function $S$ satisfies
$S(\mathit{1})=1$, with $\mathit{1}$ the vector in $\mathbb{R}^{m}$
with all its components equal to $1$. Furthermore, the denominator
in (\ref{eq:rtd}) must be regularly varying, so $S(\mathit{1}\lambda)=\lambda^{\nicefrac{-1}{\eta}}$
for all $\lambda>0$ with $\eta\in(0,1]$ the \textit{residual dependence
index, }and by (\ref{eq:rtd}),
\begin{equation}
S(x\lambda)=\lambda^{\nicefrac{-1}{\eta}}S(x)\quad\forall x\in(0,\infty)^{m},\:\lambda>0.\label{eq:RTD_symm}
\end{equation}

Every regularly varying function $f\in RV_{\alpha}$ can be represented
as
\begin{equation}
f(y)=c(y)e^{\int_{y_{0}}^{y}a(t)t^{-1}dt}\label{eq:rep-RV}
\end{equation}
with $c(y)\rightarrow c_{0}>0$ and $a(y)\rightarrow\alpha$ as $y\rightarrow\infty$.
A minor strengthening of regular variation is that $f$ satisfies
the \emph{Von Mises condition} (see \emph{e.g.} Proposition 1.15 of
\citet{Resnick book}), which means that $c$ in (\ref{eq:rep-RV})
can be taken equal to a positive number $c_{0}$; it implies that
$f$ is differentiable with derivative $f'(y)=a(y)f(y)/y$. Note that
whenever the LDP (\ref{eq:LDP_mexp}) holds and $\inf I(A)\in(0,\infty)$
for a particular Borel set $A$, then the function $(y\mapsto-\log P(Y/y\in A))$
is in $RV_{1}$. Therefore, within the context of the LDP (\ref{eq:LDP_mexp}),
the statement that $(y\mapsto-\log P(Y/y\in A))$ satisfies the Von
Mises condition makes sense as a smoothness condition. The following
relates RTD to the tail LDP (\ref{eq:LDP_mexp}). 
\begin{proposition}
\label{pro:hidden}(a) RTD (\ref{eq:rtd}) implies
\begin{equation}
\lim_{y\rightarrow\infty}\frac{1}{y}\log P(Y/y\in(\lambda,\infty)^{m})=-\lambda/\eta\quad\forall\lambda>0,\label{eq:quasiLDP}
\end{equation}
with $\eta$ the residual dependence index of $X$.

\noindent (b) If $X$ satisfies an LDP (\ref{eq:LDP_mexp}) with the
function $(y\mapsto-\log P(Y/y\in(1,\infty)^{m}))$ satisfying the
Von Mises condition, then (\ref{eq:rtd}) holds for $x=\mathit{1}\lambda$
for all $\lambda>0$ with $S(\mathit{1}\lambda)=\lambda^{\nicefrac{-1}{\eta}}$
and $\eta=1/I(\mathit{1})$. \end{proposition}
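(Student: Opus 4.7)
For part (a), I would translate (\ref{eq:rtd}) from the Pareto $V$ to the exponential $Y$ via $V_j = \mathrm{e}^{Y_j}$. Setting $x = \mathit{1}\lambda$ and using $S(\mathit{1}\lambda) = \lambda^{-1/\eta}$ for all $\lambda > 0$ shows that $t \mapsto P(V > t\mathit{1}) = P(Y \in (\log t, \infty)^m)$ is regularly varying of index $-1/\eta$, so $P(Y \in (y,\infty)^m) = L(\mathrm{e}^y)\mathrm{e}^{-y/\eta}$ for some slowly varying $L$. Since $\log L(\mathrm{e}^y)/y \to 0$ (a standard fact for slow variation), we obtain $y^{-1}\log P(Y/y \in (1,\infty)^m) \to -1/\eta$. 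For arbitrary $\lambda > 0$, apply the same limit at $\lambda y$ using $P(Y/y \in (\lambda,\infty)^m) = P(Y/(\lambda y) \in (1,\infty)^m)$ to conclude (\ref{eq:quasiLDP}).

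For part (b), set $f(y) := -\log P(Y/y \in (1,\infty)^m)$. First apply the LDP to $A = (1,\infty)^m$, with $\bar A = [1,\infty)^m$. Homogeneity (\ref{eq:I_mexp_symm}) forces $\inf I((1,\infty)^m) = \inf I([1,\infty)^m)$: for every $x \in [1,\infty)^m$, the point $(1+n^{-1})x$ lies in $(1,\infty)^m$ and $I((1+n^{-1})x) = (1+n^{-1})I(x) \to I(x)$. Identifying this common infimum with $I(\mathit{1})$ (discussed below), the LDP pinches $f(y)/y \to I(\mathit{1})$. Because the limit is a positive finite constant, $f \in RV_1$, and the Von Mises hypothesis yields the differentiable representation $f'(y) = (a(y)/y)f(y)$ with $a(y) \to 1$, so $f'(y) = a(y)\cdot f(y)/y \to I(\mathit{1})$. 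Then
\[
\frac{P(V > t\lambda\mathit{1})}{P(V > t\mathit{1})} = \exp\bigl(f(\log t) - f(\log t + \log\lambda)\bigr),
\]
and the mean value theorem gives $f(y + \log\lambda) - f(y) = f'(\xi_y)\log\lambda \to I(\mathit{1})\log\lambda$ as $y\to\infty$, so the ratio tends to $\lambda^{-I(\mathit{1})} = \lambda^{-1/\eta}$, yielding $S(\mathit{1}\lambda) = \lambda^{-1/\eta}$ with $\eta = 1/I(\mathit{1})$.

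The hardest step is the identification $\inf I([1,\infty)^m) = I(\mathit{1})$: the inequality $\leq$ is immediate because $\mathit{1}$ lies in the set, but the reverse is not automatic, since a priori the infimum might sit on an off-diagonal face of $[1,\infty)^m$ where some coordinate equals 1. My plan would be to establish that $I$ is coordinatewise nondecreasing on $[0,\infty)^m$, so that $x \geq \mathit{1}$ componentwise forces $I(x) \geq I(\mathit{1})$. This should follow from the defining formula (\ref{eq:weakLDP}) together with the exponential marginal estimate $P(Y_j \in (y(x_j-\varepsilon), y(x_j+\varepsilon))) = \mathrm{e}^{-y(x_j-\varepsilon)}(1-\mathrm{e}^{-2y\varepsilon})$, which is strictly decreasing in $x_j$; combined with the reduction, via homogeneity, of any candidate minimizer $x^*$ to one with $\min x^* = 1$, this pins the infimum at $\mathit{1}$. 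This monotonicity argument is where the real technical work lies.
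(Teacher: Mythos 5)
Your part (a) is correct and amounts to the same reasoning as the paper, which routes through the minimum $Y_{\wedge}:=\min_j Y_j$ and invokes the Potter bounds; your slowly-varying representation $P(Y\in(y,\infty)^m)=L(\mathrm{e}^y)\mathrm{e}^{-y/\eta}$ with $\log L(\mathrm{e}^y)/y\to0$ achieves exactly the same thing. The overall skeleton of your (b) also matches the paper: apply the LDP to $(1,\infty)^m$ to pin $f(y)/y$, invoke Von Mises for differentiability, and then use the mean value theorem (the paper says ``by averaging'') to convert $f'(y)\to1/\eta$ into the probability-ratio limit. Your observation that homogeneity makes $(1,\infty)^m$ a continuity set is also correct and is what the paper's phrase ``due to (\ref{eq:I_mexp_symm})'' is pointing at.

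However, the step you flag as the hardest --- identifying $\inf I([1,\infty)^m)$ with $I(\mathit{1})$ --- is where your proposed argument fails, and it would fail even if carried out in full. The exponential marginal estimate $P(Y_j\in(y(x_j-\varepsilon),y(x_j+\varepsilon)))=\mathrm{e}^{-y(x_j-\varepsilon)}(1-\mathrm{e}^{-2y\varepsilon})$ is only an upper bound for the joint ball probability $P(Y/y\in B_\varepsilon(x))$; it yields the one-sided inequality $I(x)\geq\max_j x_j$ (this is exactly (\ref{eq:I_lb})) and says nothing about how $I$ varies as you move $x$. More importantly, coordinatewise monotonicity of $I$ on $[0,\infty)^m$ is false in general: if $I(a+x)\geq I(a)$ held for all $a,x\in[0,\infty)^m$ then $\kappa(a):=\inf I(A_a)$ would equal $I(a)$ for every $a$, whereas the paper explicitly points out after Theorem \ref{thm:RD_general} that $\kappa=I$ requires a ``rather restrictive'' condition (``a rate function resembles a density more than it resembles a survival function''), and Example \ref{exa:kappa} exhibits $\kappa\neq I$ on wide regions already for the bivariate normal. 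So your plan to establish global coordinatewise monotonicity is the wrong tool. What the argument actually needs is the much weaker assertion $\inf I((1,\infty)^m)=I(\mathit{1})$ --- equivalently $\kappa(\mathit{1})=I(\mathit{1})$ --- which the paper asserts with only a pointer to homogeneity; if you want to flesh this out, the right target is that single equality on the diagonal, not a density-like monotonicity that the paper itself warns you not to expect.
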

\begin{svmultproof}
Define $Y_{_{\wedge}}:=\min_{j\in\{1,..,m\}}Y_{j}$, and let $H_{_{\wedge}}$
be the distribution function of $Y_{_{\wedge}}$. By (\ref{eq:rtd}),
the survival function $1-H_{_{\wedge}}\circ\log$ of the random variable
$\exp Y_{_{\wedge}}$ is regularly varying with index $-1/\eta$.
Therefore, $f:=1/(1-H_{_{\wedge}}\circ\log)\in RV_{\{1/\eta\}}$,
so by the Potter bounds (\citet{Bingham}), for every $\varepsilon\in(0,1/\eta),$
there is $z_{\varepsilon}>0$ such that $(1-\varepsilon)(x/z)^{\nicefrac{1}{\eta}-\varepsilon}\leq f(x)/f(z)\leq(1+\varepsilon)(x/z)^{\nicefrac{1}{\eta}+\varepsilon}$
for all $z\geq z_{\varepsilon}$ and $x\geq z$. Taking logarithms
and substituting $\textrm{e}^{y\lambda}$ for $x$ gives $\lim_{y\rightarrow\infty}y^{-1}\log f(\textrm{e}^{y\lambda})\rightarrow\lambda/\eta$
for all $\lambda>0$, so (\ref{eq:quasiLDP}) follows. For (b), note
that due to (\ref{eq:I_mexp_symm}), the LDP (\ref{eq:LDP_mexp})
implies (\ref{eq:quasiLDP}) with $\eta=1/I(\mathit{1})$, so $w(y):=-\log(1-H_{_{\wedge}}(y))\sim y/\eta$
as $y\rightarrow\infty$. Therefore, since $w$ satisfies the Von
Mises condition, $w'(y)\rightarrow1/\eta$ and by averaging, $w(y+r)-w(y)\rightarrow r/\eta$
as $y\rightarrow\infty$ for every $r\in\mathbb{R}$. This is equivalent
to (\ref{eq:rtd}) for $x=\mathit{1}\lambda$ with $S(\mathit{1}\lambda)=\lambda^{\nicefrac{-1}{\eta}}$
for every $\lambda>0$. 
\end{svmultproof}

Proposition \ref{pro:hidden} shows that RTD implies a limited LDP-like
condition and in turn, the LDP (\ref{eq:LDP_mexp}) with an additional
smoothness condition implies an RTD-like condition. 
\begin{example}
\label{exa:rtd}The bivariate normal $X$ of Example \ref{exa:biv-normal}
satisfies the conditions for Proposition \ref{pro:hidden}(b) with
$I(\mathit{1})=2/(1+\rho)$. Indeed, (\ref{eq:rtd}) holds for $x=\mathit{1}\lambda$
for all $\lambda>0$, with $S(\mathit{1}\lambda)=\lambda^{\nicefrac{-1}{\eta}}$
and $\eta=(1+\rho)/2$; see Example Class 2(1) in \citet{Ledford=000026Tawn96}. 
\end{example}
If $\lim_{t\rightarrow\infty}t^{-1}P(V_{j}>t\;\forall j\in\{1,..,m\})=0$,
then there is a discrepancy between the ``hidden'' regularity of
the survival function in $(0,\infty)^{m}$ described by (\ref{eq:rtd})
and the regularity of the marginals. In contrast, the LDP (\ref{eq:LDP_mexp})
provides a single consistent description of the multivariate tail
which includes the marginal tails. Furthermore, the next theorem shows
that under a smoothness assumption similar to the one in Proposition
\ref{pro:hidden}(b), the LDP (\ref{eq:LDP_mexp}) implies a useful
extension of RTD. Let for all $a\in\mathbb{R}^{m}$,
\begin{equation}
A_{a}:=\{x\in\mathbb{R}^{m}:\: x_{j}>a_{j}\:\forall j\in\{1,..,m\}\}.\label{eq:A_a}
\end{equation}

\begin{theorem}
\label{thm:RD_general}(a) Assume that the LDP (\ref{eq:LDP_mexp})
applies. To any Borel set $A\subset\mathbb{R}^{m}$ which is a continuity
set of $I$ with $(y\mapsto-\log P(Y/y\in A))$ satisfying the Von
Mises condition, the following limit relation applies:
\begin{equation}
\lim_{t\rightarrow\infty}\frac{P(Y\in A\log(t\lambda))}{P(Y\in A\log t)}=\lambda^{-\inf I(A)}\quad\forall\lambda>0,\label{eq:short-range-limit}
\end{equation}
with $I$ satisfying (\ref{eq:I_mexp_symm}), (\ref{eq:I_marg}) and
$I(0)=0$. In particular, for every $a\in[0,\infty)^{m}$ such that
the function $(y\mapsto-\log P(Y_{j}>ya_{j}\;\forall j\in\{1,..,m\})$
satisfies the Von Mises condition,
\begin{equation}
\lim_{t\rightarrow\infty}\frac{P(V_{j}>(t\lambda)^{a_{j}}\;\forall j\in\{1,..,m\})}{P(V_{j}>t{}^{a_{j}}\;\forall j\in\{1,..,m\})}=\lambda^{-\inf I(A_{a})}\quad\forall\lambda>0.\label{eq:short-range-SF}
\end{equation}

(b) Eq. (\ref{eq:short-range-limit}) with $\inf I(A)\in(0,\infty)$
implies (\ref{eq:LDL}).\end{theorem}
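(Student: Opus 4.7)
The plan is to reduce both parts to statements about the univariate function $w(y) := -\log P(Y \in yA)$, then invoke the Von Mises hypothesis in part (a) and a Karamata slow-variation argument in part (b).

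For part (a), the LDP (\ref{eq:LDP_mexp}) together with the continuity-set assumption collapses its two bounds to the single limit
$$\lim_{y\to\infty} w(y)/y = \inf I(A) =: I_A.$$
When $I_A \in (0,\infty)$, this places $w$ in $RV_1$, and the Von Mises hypothesis, via the representation (\ref{eq:rep-RV}) with constant prefactor, makes $w$ differentiable with $w'(y) = a(y)w(y)/y$ where $a(y)\to 1$; combined with $w(y) \sim I_A y$ this yields $w'(y) \to I_A$. Hence for every $v \in \mathbb{R}$,
$$w(y+v) - w(y) = \int_y^{y+v} w'(t)\,dt \to I_A v$$
as $y \to \infty$, uniformly on compact sets in $v$. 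Substituting $y = \log t$ and $v = \log\lambda$ and exponentiating gives exactly (\ref{eq:short-range-limit}); the properties of $I$ listed in the conclusion are inherited directly from Theorem \ref{thm:ldp_exp}, which applies because (\ref{eq:LDP_mexp}) is assumed and $Y$ has standard exponential marginals. For the specialisation (\ref{eq:short-range-SF}), I take $A = A_a$ and note that $\{Y \in A_a\log t\} = \{V_j > t^{a_j}\;\forall j\in\{1,..,m\}\}$.

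For part (b), substituting $y = \log t$ and $v = \log\lambda$ in (\ref{eq:short-range-limit}) rewrites the hypothesis in additive form: $w(y+v) - w(y) \to I_A v$ for every $v \in \mathbb{R}$. Setting $\tilde w(y) := w(y) - I_A y$, this becomes $\tilde w(y+v) - \tilde w(y) \to 0$, so the Borel-measurable function $L(t) := \exp(\tilde w(\log t))$ is slowly varying at infinity. Karamata's theorem for measurable slowly varying functions then yields $\log L(t)/\log t \to 0$, which translates back to $\tilde w(y)/y \to 0$ and hence $w(y)/y \to I_A$, which is (\ref{eq:LDL}).

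The main obstacle in both parts is the same: each hypothesis delivers only pointwise-in-$v$ information about $w(y+v) - w(y)$, whereas the targets require global growth information on $w$. In part (a) the Von Mises condition resolves this cleanly through the derivative estimate, which upgrades pointwise control to uniform-in-$v$ control via the integral representation. In part (b), without any smoothness available, the slow-variation/Karamata detour is essential, as it is precisely what packages the pointwise additive asymptotics into a statement about the average growth rate of $w$.
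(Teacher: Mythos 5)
Your proof is correct and follows essentially the same route as the paper's. For part (a), the paper invokes "the same manner as in the proof of Proposition~\ref{pro:hidden}(b)" (Von Mises gives $w'(y)\to\inf I(A)$, then "by averaging" $w(y+r)-w(y)\to r\inf I(A)$); your integral $\int_y^{y+v}w'(t)\,dt$ makes that averaging step explicit but is the same argument. For part (b), the paper sets $f(t):=1/P(Y\in A\log t)\in RV_{\inf I(A)}$ and invokes Potter bounds as in Proposition~\ref{pro:hidden}(a), whereas you factor out $t^{-\inf I(A)}$ to get a slowly varying $L$ and cite $\log L(t)/\log t\to 0$; these are equivalent consequences of the representation theorem and give the same conclusion.
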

\begin{svmultproof}
For $A$ a continuity set of $I$, (\ref{eq:LDP_mexp}) implies (\ref{eq:LDL}),
and (\ref{eq:short-range-limit}) is obtained in the same manner as
in the proof of Proposition \ref{pro:hidden}(b). In particular, $A_{a}$
is a continuity set of $I$ for every $a\in[0,\infty)^{m}$. Therefore,
substituting $A_{a}$ for $A$ in (\ref{eq:short-range-limit}), we
obtain (\ref{eq:short-range-SF}). This proves (a). For (b), note
that $f:=(t\mapsto1/P(Y\in A\log t))\in RV_{\inf I(A)}$. Therefore,
just as in the proof of Proposition \ref{pro:hidden}(a), $\lim_{y\rightarrow\infty}y^{-1}\log f(\textrm{e}^{y\lambda})\rightarrow\lambda\inf I(A)$
for all $\lambda\geq1$, which implies (\ref{eq:LDL}).
\end{svmultproof}

Combining (a) and (b) in Theorem \ref{thm:RD_general}, we see that
under the Von Mises condition (for $A$ a Borel continuity set of
$I$), the limit relation (\ref{eq:short-range-limit}) for a probability
ratio, and the limit relation (\ref{eq:LDL}) of the normalised logarithm
of a probability are equivalent. 

In the special case of $a=\mathit{1}$, (\ref{eq:short-range-SF})
becomes equivalent to (\ref{eq:rtd}) with $x=\mathit{1}\lambda$
and $\eta=1/I(\mathit{1})$, so on the diagonal, (\ref{eq:short-range-SF})
and RTD (\ref{eq:rtd}) agree; elsewhere, they differ. Defining a
function $\kappa$ by $\kappa(a):=\inf I(A_{a})$ for every $a\in[0,\infty)^{m}$,
(\ref{eq:short-range-SF}) becomes identical to an extension of RTD
recently introduced in \citet{Wadsworth=000026Tawn}. \citet{Wadsworth=000026Tawn}
proposed this assumption to close the possible gap between (\ref{eq:rtd})
and the regularity of the marginal tails. It is curious that this
condition, requiring the existence of separate limits of the survival
function along chosen paths, is derivable from the simple LDP (\ref{eq:LDP_mexp}).

The generalisation of (\ref{eq:short-range-SF}) with $\inf I(A_{a})$
replaced by $\kappa(a)$ to the apparently new limit relation (\ref{eq:short-range-limit})
is not trivial. Another generalisation, proposed in \citet{Wadsworth=000026Tawn},
is
\begin{equation}
\lim_{t\rightarrow\infty}\frac{P(Y\in B+a\log(t\lambda))}{P(Y\in B+a\log t)}=\lambda^{-\kappa(a)},\label{eq:WandT_2}
\end{equation}
derived in Section 3.3 of \citet{Wadsworth=000026Tawn} for the bivariate
case under the assumption that $\kappa$ is differentiable and $a\in[0,\infty)^{m}\setminus\{0\}$
satisfies $\partial\kappa(a)/\partial a_{j}>0$ for $j=1,..,m$. As
noted in \citet{Wadsworth=000026Tawn}, $a$ in (\ref{eq:WandT_2})
would have to be chosen in an application. This would be no problem
if the choice did not matter. However, the limiting behaviour of the
probability of the event $Y\in B+a\log t$ as $t\rightarrow\infty$
is determined by $a$ in (\ref{eq:WandT_2}); not by $B$. Therefore,
for estimating probabilities of extreme events, (\ref{eq:short-range-limit})
seems more promising than the local limits (\ref{eq:WandT_2}) for
chosen $a.$

In (\ref{eq:short-range-limit}), it is not $\kappa$, but the rate
function $I$ which determines the attenuation rate. For any $a\in[0,\infty)^{m}$,
$I(a)$ and $\kappa(a)$ are identical only if $I(a+x)\geq I(a)$
for all $x\in[0,\infty)^{m}$. This condition is rather restrictive,
as a rate function resembles a density more than it resembles a survival
function; see definition (\ref{eq:weakLDP}). 
\begin{example}
\label{exa:kappa}As an illustration, let $X$ be bivariate normal
with correlation coefficient $\rho$ as in Example \ref{exa:biv-normal}
(Section \ref{sec:Dependence}). By Example 1(a,b) in Table 1 of \citet{Wadsworth=000026Tawn},
$\kappa(x)=I(x)$ if $\min(x_{1}/x_{2},x_{2}/x_{2})>\rho^{2}$ or
if $\rho<0$ and $\min(x_{1},x_{2})>0$, and $\kappa(x)=\max(x_{1},x_{2})$
for all other $x\in[0,\infty)^{2}$. The left and middle panels of
Figure \ref{fig:bivar_normal}.1 display contours of $\kappa$ overlaying
the contours of $I$ for $\rho=0.8$ and $\rho=0.2$. For $\rho=0.2$,
contours of $\kappa$ largely overlap with those of $I$; for $\rho=0.8$,
there are wide zones where the contours of $\kappa$ and $I$ differ. 
\end{example}

\section{\label{sec:estimator}A simple estimator for very small probabilities}

We are now going to apply the theory of Section \ref{sec:Dependence}
to the problem of estimation of probabilities of extreme events $p_{n}$
satisfying (\ref{eq:p_n}) from $X^{(1)},...,X^{(n)}$, with $X^{(1)},X^{(2)},...$
a sequence of \emph{iid} copies of a random vector $X$ in $\mathbb{R}^{m}$
with distribution function $F$ having continuous marginals $F_{1},..,F_{m}$.
Denoting the underlying probability space as $(\Omega,\mathcal{F},\mathbb{\mathcal{P}})$,
let $\mathcal{F}_{n}\subset\mathcal{F}$ be the $\sigma$-algebra
generated by $X^{(1)},...,X^{(n)}$. 

Generalising (\ref{eq:p_n}) to $\tau_{2}>\tau_{1}>0$, consider events
of the form $B_{n}:=Q(A\log n)$ with $A\subset[0,\infty)^{m}$ and
$Q$ given by (\ref{eq:Qdef}). Suppose that the tail LDP (\ref{eq:LDP_mexp})
applies. Then for every Borel set $A\subset[0,\infty)^{m}$ which
is a continuity set of $I$ satisfying that $\inf I(A)\in(0,\infty)$,
we have: $-\log P(X\in B_{n})=-\log P(Y\in A\log n)\sim(\log n)\inf I(A)$
and $-\log P(Q(Y/\ell)\in B_{n})=-\log P(Y\in A\ell\log n)\sim\ell(\log n)\inf I(A)$
for all $\ell>0$, so
\begin{equation}
\log P(X\in B_{n})\sim\ell^{-1}\log P(Q(Y/\ell)\in B_{n})\quad\forall\ell>0.\label{eq:P_approx}
\end{equation}

This suggests estimating the left-hand side of (\ref{eq:P_approx})
by replacing $Q$ on the right-hand side by an estimator $\hat{Q}_{n}$
and $Y$ by an estimator $\hat{Y}_{n}$, and then choosing $\ell$
small enough that $P(\hat{Q}_{n}(\hat{Y}_{n}/\ell)\in B_{n})$ can
be estimated directly from the data by counting. 

Estimation of $Q$ boils down to a univariate quantile estimation
problem, so we will proceed to examine this first. Assume that every
marginal satisfies a log-GW tail limit (\emph{i.e}., the univariate
tail LDP, see Section \ref{sec:univar}). Let $X_{j,1:n}\leq...\leq X_{j,n:n}$
be the marginal order statistics derived from the marginal sample
$X_{j}^{(1)},...,X_{j}^{(n)}$. For some intermediate sequence $(k_{n})$
and for $n$ large enough that $X_{j,n-k_{n}+1:n}>0$ for $j=1,..,m$,
define the following estimator $\hat{q}_{j,n}$ for $q_{j}$ (compare
(\ref{eq:qtilde-def})):
\begin{equation}
\hat{q}_{j,n}(z):=\begin{cases}
X_{j,\left\lfloor n(1-\textrm{e}^{-z})\right\rfloor +1:n} & \textrm{if\:}z\in[0,y_{n}]\\
X_{j,n-k_{n}+1:n}\exp\left(\hat{g}_{j,n}h_{\hat{\theta}_{j,n}}(z/y_{n})\right) & \textrm{if\:}z>y_{n}
\end{cases}\label{eq:q_hat}
\end{equation}
with
\begin{equation}
y_{n}:=\log(n/k_{n}).\label{eq:def_y_n}
\end{equation}

For $z>y_{n}$, $\hat{q}_{j,n}(z)$ follows a log-GW tail with $\hat{\theta}_{j,n}$
and $\hat{g}_{j,n}$ estimators for $\theta_{j}$ and $g_{j}$ in
(\ref{eq:qtilde-def}), respectively; for other $z$, the empirical
quantile is used as estimator. The only assumption we will make on
the quantile estimator is that the probability-based quantile estimation
error $\hat{\nu}_{j,n}$, defined by
\begin{equation}
\hat{\nu}_{j,n}(z):=\frac{\log(1-F_{j}(\hat{q}_{j,n}(z)))}{\log(1-F_{j}(q_{j}(z)))}-1=z^{-1}q_{j}^{-1}\hat{q}_{j,n}(z)-1\label{eq:nu_def}
\end{equation}
for $z\geq0$, satisfies
\begin{equation}
\lim_{n\rightarrow\infty}\sup_{\lambda\in[1,\Lambda]}\left|\hat{\nu}_{j,n}(y_{n}\lambda)\right|=0\quad a.s.\quad\forall\Lambda>1,\: j=1,..,m.\label{eq:nu_conv-0}
\end{equation}

Estimators $\hat{\theta}_{n,j}$ and $\hat{g}_{n,j}$ in (\ref{eq:q_hat})
satisfying this requirement were considered in \citet{De Valk I}.
Let 
\begin{equation}
\hat{Q}_{n}(x):=(\hat{q}_{1,n}(x_{1}),...,\hat{q}_{m,n}(x_{m}))\label{eq:Q_hat}
\end{equation}
for every $x\in[0,\infty)^{m}$. Define the following estimator for
$Y_{j}^{(i)}:=-\log(1-F(X_{j}^{(i)}))$:
\begin{equation}
\hat{Y}_{j,n}^{(i)}:=-\log(1-(R_{j,n}^{(i)}-{\scriptstyle \frac{1}{2}})/n)\label{eq:Zhat_def_curly}
\end{equation}
with $R_{j,n}^{(i)}:=\sum_{l=1}^{n}\mathbf{1}(X_{j}^{(l)}\leq X_{j}^{(i)})$
the marginal rank of $X_{j}^{(i)}$. 

For every $n$-tuple of events $\mathcal{C}_{n}:=(\mathcal{C}_{n}^{(1)},..,\mathcal{C}_{n}^{(n)})$
satisfying $\mathcal{C}_{n}^{(i)}\in\mathcal{F}_{n}$ for $i=1,..,n$,
define the ``empirical probability'' $\hat{p}_{n}(\mathcal{C}_{n}):=\omega\mapsto\hat{p}_{n}(\mathcal{C}_{n})(\omega)$
on $\Omega$ by
\begin{equation}
\hat{p}_{n}(\mathcal{C}_{n})(\omega):=n^{-1}\sum_{i=1}^{n}\mathbf{1}(\omega\in\mathcal{C}_{n}^{(i)}).\label{eq:p_hat_def}
\end{equation}

For some $\xi>0$, determine a value of the analogue of $\ell$ in
(\ref{eq:P_approx}) as
\begin{equation}
\ell_{n}^{+}(B):=\sup\{l>0:\:\hat{p}_{n}(\hat{Q}_{n}(\hat{Y}_{n}/l)\in B)\geq(k_{n}/n)^{\xi}\},\label{eq:l+def}
\end{equation}
with $\sup\{\emptyset\}:=0$ and with $\hat{p}_{n}(\hat{Q}_{n}(\hat{Y}_{n}/l)\in B)=n^{-1}\sum_{i=1}^{n}\mathbf{1}(\hat{Q}_{n}(\hat{Y}_{n}^{(i)}/l)\in B)$
in accordance with (\ref{eq:p_hat_def}). 

Let $\pi$ denote the probability measure corresponding to $F$. Now
consider the following estimator for $\pi(B):=P(X\in B)$:
\begin{equation}
{\displaystyle \hat{\pi}_{n}^{\mathrm{I}}(B):=\left(k_{n}/n\right)^{\xi/\ell_{n}^{+}(B)}}.\label{eq:estimator-I}
\end{equation}

If $B_{n,\tau}:=Q(A\tau\log n)$ is substituted for $B$, then under
mild restrictions on $A$ and $(k_{n})$, this estimator converges
in the large deviation sense for all $\tau>0$:
\begin{theorem}
\label{thm: estimator-I}Let $X^{(1)},X^{(2)},...$ be iid copies
of a random vector $X$ on $\mathbb{R}^{m}$ satisfying the conditions
of Theorem \ref{thm:ldp_complete}(a), including continuous marginals
satisfying log-GW tail limits.  For a sequence  $(k_{n})$ satisfying
\begin{equation}
0\leq c':=\liminf_{n\rightarrow\infty}\frac{\log k_{n}}{\log n}\leq\limsup_{n\rightarrow\infty}\frac{\log k_{n}}{\log n}=:c<1,\label{eq:k0_for_estimator}
\end{equation}
consider the estimator (\ref{eq:estimator-I}) for $P(X\in B)$, with
the quantile estimator (\ref{eq:q_hat}) satisfying (\ref{eq:nu_conv-0})
and with $\xi\in(0,(1-c')^{-1})$. Then for $B_{n,\tau}:=Q(A\tau\log n)$,
with $A\subset[0,\infty)^{m}$ any Borel set which is a continuity
set of $I$ defined by (\ref{eq:weakLDP}) and (\ref{eq:Ydef}) and
satisfies $\inf I(A)\in(0,\infty)$,
\begin{equation}
\lim_{n\rightarrow\infty}\sup_{\tau\in[T^{-1},T]}\left|\frac{\log\hat{\pi}_{n}^{\mathrm{I}}(B_{n,\tau})}{\log P(X\in B_{n,\tau})}-1\right|=0\quad a.s.\quad\forall T>1.\label{eq:claim_estimator-I}
\end{equation}

\end{theorem}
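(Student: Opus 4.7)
The plan is to transform the claim into one about the critical level $\ell_n^+(B_{n,\tau})$. Since $\log\hat{\pi}_n^{\mathrm{I}}(B_{n,\tau}) = -\xi y_n/\ell_n^+(B_{n,\tau})$ and, by Theorem~\ref{thm:ldp_complete}(a) applied to the continuity set $A$ (noting $P(X\in B_{n,\tau}) = P(Y\in(\tau\log n)A)$ and that $\tau\log n\to\infty$ uniformly in $\tau\in[T^{-1},T]$), we have $\log P(X\in B_{n,\tau}) = -\tau(\log n)\inf I(A)(1+o(1))$ uniformly in $\tau$, so it suffices to prove
\[
\ell_n^+(B_{n,\tau}) = \frac{\xi y_n}{\tau(\log n)\inf I(A)}\,(1+o(1))\quad\textrm{a.s., uniformly in }\tau\in[T^{-1},T].
\]

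To analyse $\ell_n^+$, I would re-express the plug-in condition in (\ref{eq:l+def}) in terms of the true $Y^{(i)}$. By (\ref{eq:nu_def}), the event $\hat{Q}_n(\hat{Y}_n^{(i)}/l)\in B_{n,\tau}$ is equivalent to $\tilde{Y}_n^{(i)}(l)\in l\tau(\log n)A$, where $\tilde{Y}_{j,n}^{(i)}(l):=\hat{Y}_{j,n}^{(i)}\bigl(1+\hat{\nu}_{j,n}(\hat{Y}_{j,n}^{(i)}/l)\bigr)$. Combining (\ref{eq:nu_conv-0}) with a tail Glivenko--Cantelli result for the standard-exponential empirical distribution (controlling $\hat{Y}_{j,n}^{(i)}/Y_j^{(i)}-1$ uniformly on the relevant tail range of ranks) yields $\max_{j,i}|\tilde{Y}_{j,n}^{(i)}(l)/Y_j^{(i)}-1|\to 0$ a.s., uniformly over $(l,\tau)$ in the ranges of interest. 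Using homogeneity of $I$ and the continuity-set property, I can then sandwich the plug-in empirical probability between $\hat p_n(Y\in l\tau(\log n)A^{\pm\epsilon})$ for open inner and outer $\epsilon$-approximations $A^{-\epsilon}\subset A\subset A^{+\epsilon}$ with $\inf I(A^{\pm\epsilon})\to\inf I(A)$ as $\epsilon\to 0$.

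Next I apply the LDP to the sandwich: $p_n^\pm(l,\tau) := P(Y\in l\tau(\log n)A^{\pm\epsilon}) = (k_n/n)^{l\tau(\log n)\inf I(A^{\pm\epsilon})/y_n\,(1+o(1))}$. Bernstein's inequality gives $P(|\hat p_n-p_n^\pm|>\delta p_n^\pm)\leq 2e^{-c\delta^2 np_n^\pm}$, and the assumption $\xi\in(0,1/(1-c'))$ forces $n(k_n/n)^\xi\geq n^{1-\xi(1-c')-o(1)}\to\infty$ polynomially, so for $l$ in a bounded range the failure probability is summable and Borel--Cantelli yields $\hat p_n/p_n^\pm\to 1$ a.s. Choosing $l_n^\pm(\tau):=(1\mp\delta)\xi y_n/(\tau(\log n)\inf I(A))$ makes $p_n^\pm$ polynomially larger (resp.\ smaller) than the threshold $(k_n/n)^\xi$; combining with the sandwich gives $\hat p_n(\hat{Q}_n(\hat{Y}_n/l_n^-)\in B_{n,\tau})>(k_n/n)^\xi$ and $\hat p_n(\hat{Q}_n(\hat{Y}_n/l_n^+)\in B_{n,\tau})<(k_n/n)^\xi$ a.s. eventually, so $l_n^-(\tau)\leq\ell_n^+(B_{n,\tau})\leq l_n^+(\tau)$; letting $\delta,\epsilon\to 0$ closes the argument pointwise in $\tau$.

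For uniformity in $\tau\in[T^{-1},T]$, the event $\tilde{Y}^{(i)}(l)\in l\tau(\log n)A$ depends on $(l,\tau)$ only through the product $l\tau$ modulo the vanishing $\tau$-dependence in $\hat{\nu}_{j,n}$, so $\tau\mapsto\tau\ell_n^+(B_{n,\tau})$ is essentially $\tau$-constant; hence pointwise convergence on a finite grid in $\tau$, combined with monotonicity of the empirical probabilities in $l$, delivers uniformity. The principal obstacle is maintaining uniform control of the plug-in quantile-estimation error and the Bernstein concentration simultaneously over $l$ in a bounded range and $\tau\in[T^{-1},T]$; both are manageable thanks to the polynomial growth of $n(k_n/n)^\xi$ and the uniform bound (\ref{eq:nu_conv-0}).
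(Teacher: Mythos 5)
Your reduction to the level $\ell_n^+(B_{n,\tau})$ is exactly right, as is the outer/inner sandwich, and your Bernstein plus Borel--Cantelli computation correctly uses $\xi\in(0,(1-c')^{-1})$ to force $n(k_n/n)^\xi$ to grow polynomially. However, there is a genuine gap in how you pass from pointwise concentration to the uniformity in $(l,\tau)$ asserted in (\ref{eq:claim_estimator-I}). You propose a finite grid in $\tau$ together with ``monotonicity of the empirical probabilities in $l$'' to interpolate, but for a general Borel continuity set $A$ the events $\{Y^{(i)}\in l\tau(\log n)A\}$ (or their plug-in versions) are \emph{not} nested as $l$ varies: $A$ is not assumed to be a cone or an upper set, so there is no a priori monotonicity in $l$, and your interpolation step does not go through.

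The paper's Lemma \ref{lem: for En} resolves exactly this difficulty by introducing the cone $\mathcal{A}:=\cup_{\lambda\geq1}(\lambda A)$, whose dilates are nested ($l'\mathcal{A}\subset l\mathcal{A}$ for $l'\geq l$), defining the scalar ``radial hitting level'' $v:=\inf\{w>0: Yw\in\mathcal{A}\}$, and reducing the whole family of multivariate events to the one-dimensional family $\{v\leq w/y_n\}$; Wellner's Corollary 1 then delivers an a.s.\ uniform-in-$w$ log-ratio bound in one shot. It then transfers from $\mathcal{A}$ back to $A$ by a subtraction $\hat{p}_n(Y\in dAy_n)\geq\hat{p}_n(Y\in d\mathcal{B}y_n)-\hat{p}_n(Y\in d\eta\mathcal{B}y_n)$ using a carefully constructed auxiliary cone $\mathcal{B}$. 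Without an equivalent device (either the radial reduction to one dimension or an explicit chaining/covering argument over $(l,\tau)$ that does not rely on a monotonicity which fails), the uniformity in $\tau$, which is part of the statement you are proving, remains unjustified. Swapping Bernstein for Wellner's ratio bound is a legitimate change of concentration tool, but the cone construction is the load-bearing idea you have omitted.
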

The proof can be found in Subsection \ref{sub:Proof-P-estimator-1}. 
\begin{remark}
By (\ref{eq:LDP_mexp_Q}), as $\inf I(A^{o})=\inf I(\bar{A})$, $P(X\in B_{n,\tau})=n^{-\tau\inf I(A)(1+o(1))}$
in (\ref{eq:claim_estimator-I}), so the probability range (\ref{eq:p_n})
is covered by Theorem \ref{thm: estimator-I}.
\end{remark}
In practice, computing or approximating (\ref{eq:l+def}) may not
be easy; for example, in engineering applications, it may involve
running a complex numerical model for every datapoint. Therefore,
it would be an advantage to replace $\ell_{n}^{+}(B)$ in (\ref{eq:estimator-I})
by an arbitrary value in some suitable interval. Define for some $\vartheta\in(0,\xi]$
\begin{equation}
\ell_{n}^{-}(B):=\sup\{l>0:\:\hat{p}_{n}(\hat{Q}_{n}(\hat{Y}_{n}/l)\in B)\geq(k_{n}/n)^{\vartheta}\}.\label{eq:l-def}
\end{equation}
Then $\ell_{n}^{-}(B)\leq\ell_{n}^{+}(B)$. Let $\ell_{n}(B)$ be
the result of an algorithm designed to satisfy
\begin{equation}
\ell_{n}(B)\in[\ell_{n}^{-}(B),\ell_{n}^{+}(B)];\label{eq:l_G_def}
\end{equation}
for the present analysis, it is sufficient to assume that $\ell_{n}(B)$
is a random variable satisfying (\ref{eq:l_G_def}). Now consider
the following generalisation of the estimator (\ref{eq:estimator-I})
for $\pi(B):=P(X\in B)$:
\begin{equation}
\hat{\pi}_{n}^{\mathrm{II}}(B):=\left(\hat{p}_{n}(\hat{Q}_{n}(\hat{Y}_{n}/\ell_{n}(B))\in B)\right)^{1/\ell_{n}(B)}.\label{eq:def_estimator-II}
\end{equation}

\begin{theorem}
\label{thm: estimator-II}For $X^{(1)},X^{(2)},...$, $(k_{n})$ and
$c'$ as in Theorem \ref{thm: estimator-I}, consider the estimator
(\ref{eq:def_estimator-II}) for $P(X\in B)$, with the quantile estimator
(\ref{eq:q_hat}) satisfying (\ref{eq:nu_conv-0}) and with $\xi\in(0,(1-c')^{-1})$
and $\vartheta\in(0,\xi]$. Then for $B_{n,\tau}$ as in Theorem \ref{thm: estimator-I},
\begin{equation}
\lim_{n\rightarrow\infty}\sup_{\tau\in[T^{-1},T]}\left|\frac{\log\hat{\pi}_{n}^{\mathrm{II}}(B_{n,\tau})}{\log P(X\in B_{n,\tau})}-1\right|=0\quad a.s.\quad T>1.\label{eq:claim_estimator-II}
\end{equation}

\end{theorem}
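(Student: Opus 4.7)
The plan is to express $\log\hat\pi_n^{\mathrm{II}}(B_{n,\tau})$ as $\log p_n(\ell_n(B_{n,\tau}))/\ell_n(B_{n,\tau})$, where $p_n(l):=\hat p_n(\hat Q_n(\hat Y_n/l)\in B_{n,\tau})$, and show that both this quotient divided by $\log n$ and $\log P(X\in B_{n,\tau})/\log n$ converge to $-\tau\inf I(A)$ uniformly in $\tau\in[T^{-1},T]$, whence the ratio claimed in~\eqref{eq:claim_estimator-II} tends to $1$. The target asymptotic $\log P(X\in B_{n,\tau})\sim -\tau(\log n)\inf I(A)$ follows from~\eqref{eq:LDP_mexp_Q} since $A$ is a continuity set of $I$ and $\inf I(A)\in(0,\infty)$.

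The first step pins down the range of $\ell_n(B_{n,\tau})$ by applying Theorem~\ref{thm: estimator-I} twice: once with the stated $\xi$, and once with $\vartheta$ in place of $\xi$ (this is legal because $\vartheta\in(0,\xi]\subset(0,(1-c')^{-1})$). The two applications give, uniformly in $\tau\in[T^{-1},T]$ and a.s.,
\[
\frac{-\xi\log(k_n/n)}{\ell_n^+(B_{n,\tau})\log n}\to\tau\inf I(A),\qquad \frac{-\vartheta\log(k_n/n)}{\ell_n^-(B_{n,\tau})\log n}\to\tau\inf I(A).
\]
Combined with $-\log(k_n/n)/\log n\in[1-c-\delta,1-c'+\delta]$ eventually for any $\delta>0$, these force $\ell_n^\pm(B_{n,\tau})\tau\inf I(A)$ to lie eventually in $[\vartheta(1-c)-\delta,\xi(1-c')+\delta]$. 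Since $\ell_n(B_{n,\tau})\in[\ell_n^-(B_{n,\tau}),\ell_n^+(B_{n,\tau})]$ by~\eqref{eq:l_G_def}, the same interval traps $\ell_n(B_{n,\tau})\tau\inf I(A)$. Two strict inequalities are crucial here: $\xi(1-c')<1$ (which keeps the upper bound strictly below the critical value $1$), and $\vartheta(1-c)>0$ (which keeps the lower bound strictly positive).

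The second step is a uniform law of large numbers for $p_n$: for any compact $K\subset(0,1/\inf I(A))$,
\[
\lim_{n\to\infty}\sup_{\mu\in K,\,\tau\in[T^{-1},T]}\Bigl|\frac{\log p_n(\mu/\tau)}{\mu\log n}+\inf I(A)\Bigr|=0\quad\text{a.s.}
\]
The dependence on $(l,\tau)$ collapses to the one-parameter problem in $\mu:=l\tau$ because $\hat Q_n(\hat Y_n^{(i)}/l)\in Q(A\tau\log n)$ is, after invoking~\eqref{eq:nu_conv-0} and the standard rank approximation $\hat Y_n^{(i)}\approx Y^{(i)}$, asymptotically equivalent to $\{Y^{(i)}\in A\,\mu\log n\}$, up to inner/outer perturbation of $A$ by a factor that vanishes and, since $A$ is a continuity set of $I$, leaves $\inf I(A)$ intact. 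The LDP~\eqref{eq:LDP_mexp} delivered by Theorem~\ref{thm:ldp_exp} then gives $P(Y\in A\,\mu\log n)=n^{-\mu\inf I(A)(1+o(1))}$, so the expected count $n\,P(Y\in A\,\mu\log n)$ diverges as a strictly positive power of $n$ throughout $K$; a Borel--Cantelli argument on a fine finite grid in $\mu$, together with a monotone sandwich in $\mu$, upgrades pointwise to uniform a.s. convergence.

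Applying this uniform LLN at $\mu=\ell_n(B_{n,\tau})\tau$, which by the first step lies eventually in a compact $K\subset(0,1/\inf I(A))$ independently of $\tau\in[T^{-1},T]$, yields $\log p_n(\ell_n(B_{n,\tau}))/(\ell_n(B_{n,\tau})\log n)\to-\tau\inf I(A)$ a.s., uniformly in $\tau$. Dividing by $\log P(X\in B_{n,\tau})/\log n\to-\tau\inf I(A)$, which is bounded away from $0$ uniformly in $\tau\in[T^{-1},T]$, gives~\eqref{eq:claim_estimator-II}. The hardest part of this programme is the uniform LLN of the preceding paragraph: one must simultaneously handle probabilities that shrink as $n^{-\alpha(\mu)}$ with $\mu$-dependent exponents and propagate the estimation errors in $\hat Q_n$ and $\hat Y_n$ through the continuity-set property of $A$. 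Everything hinges on the strict separation $\xi(1-c')<1$ built into the hypothesis, transferred to $\ell_n$ by Step~1, which is precisely what keeps $\alpha(\mu)<1$ throughout the relevant range and makes the Borel--Cantelli step work.
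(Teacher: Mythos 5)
Your architecture mirrors the paper's: control $\ell_n$, then feed it into a uniform large-deviation law of large numbers, then compare with the true probability via~(\ref{eq:LDP_mexp_Q}). Step~1 is correct, and applying Theorem~\ref{thm: estimator-I} twice (with $\xi$ and with $\vartheta$) to trap $\ell_n\tau\inf I(A)$ in a compact subset of $(0,1)$ is a legitimate shortcut, since Theorem~\ref{thm: estimator-I} is provable from Lemma~\ref{lem: sampling_LDL} independently of the present statement.

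The gap is in Step~2, which is where the real work lies. Your ``monotone sandwich in $\mu$'' does not go through for a general Borel continuity set $A$: the dilates $\{A\mu:\mu>0\}$ are not nested, so there is no set inclusion between $A\mu_i\log n$ and $A\mu\log n$ for $\mu_i<\mu$, and hence no monotonicity of $\mu\mapsto\hat p_n(Y\in A\mu\log n)$ with which to interpolate between grid points. This is precisely the obstacle that the paper's Lemma~\ref{lem: for En} is engineered to overcome. For the upper bound it passes to the cone $\mathcal{A}:=\cup_{\lambda\geq1}(\lambda A)$, which is nested and satisfies $\inf I(\mathcal{A})=\inf I(A)$, thereby reducing the problem to the scalar random variable $v:=\inf\{w>0:\,Yw\in\mathcal{A}\}$, whose empirical distribution function is then controlled uniformly via \citet{Wellner} (your grid-plus-Borel--Cantelli strategy would also work at this stage, since the relevant events are now genuinely monotone). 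For the lower bound there is no comparably cheap reduction: the paper constructs a nested open set $\mathcal{B}$, pinned at a near-minimiser $x_\varepsilon\in A^o$ with $I(x_\varepsilon)<\inf I(A)+\varepsilon$, shrinks it by a homotopy argument to ensure it is a continuity set of $I$ (the construction around~(\ref{eq:cond_B_i})), and then bounds $\hat p_n(Y\in dAy_n)$ from below by the difference $\hat p_n(Y\in d\mathcal{B}y_n)-\hat p_n(Y\in d\eta\mathcal{B}y_n)$ of two quantities with monotone dilates. Omitting this lower-bound construction, as your sketch does, leaves the uniform law of large numbers unestablished for arbitrary continuity sets $A$, and in particular leaves~(\ref{eq:claim_estimator-II}) unproved except for the special $A$ whose dilates happen to be nested (e.g.\ $A=(a_1,\infty)\times\cdots\times(a_m,\infty)$).
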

The proof can be found in Subsection \ref{sub:Proof-estimator-II}.

The constraints on $(k_{n})$, $\xi$ and $\vartheta$ ensure that
$n\hat{p}_{n}(\hat{Q}_{n}(\hat{Y}_{n}/\ell_{n}(B_{n,\tau}))\in B_{n,\tau})$
is eventually bounded by powers of $n$ with exponents in $(0,1)$.
This does not seem restrictive for applications. 

In practice, based on a few trial values of $\ell_{n}(B)$ which give
``acceptable'' numbers of $n\hat{p}_{n}(\hat{Q}_{n}(\hat{Y}_{n}/\ell_{n}(B))\in B)$,
one could check the stability of $\hat{\pi}_{n}^{\mathrm{II}}(B)$
with respect to $n\hat{p}_{n}(\hat{Q}_{n}(\hat{Y}_{n}/\ell_{n}(B))\in B)$.

\section{\label{sec:Examples}Numerical examples}

First, we will discuss simulations, considering the case of a bivariate
normal random vector $U$ with standard normal marginals and correlation
coefficient $\rho=0.5$. We are not yet concerned with marginal estimation,
so for $X$, we take the random vector with standard exponential marginals
obtained from $U$ by marginal transformations; therefore, $X=Y$
in this case. 

As extreme events, we will consider halfspaces, \emph{i.e.}, $U\in\{x\in\mathbb{R}^{2}:\: a_{1}x_{1}+a_{2}x_{2}>c\}$
for some $a\in\mathbb{R}^{2}$ and $c>0$; their probabilities are
easily calculated. In terms of $X$, these events are represented
by $X\in B$ with
\begin{equation}
B=\{x\in[0,\infty)^{m}:\: a_{1}\Phi^{-1}(1-\textrm{e}^{-x_{1}})+a_{2}\Phi^{-1}(1-\textrm{e}^{-x_{2}})>c\}.\label{eq:B_example}
\end{equation}

Experiments were performed with $a_{2}=1$ and with several different
values of $a_{1}$, with $c$ in each case chosen to ensure that $P(X\in B)=4\cdot10^{-8}$.
In all experiments, $n=5000$, and the estimator (\ref{eq:estimator-I})
was applied with $\xi=1$ and $k_{n}=20$. 

Our first case concerns $a_{1}=0.5$; $B$ is shown as a grey patch
in Figure \ref{fig:bivnormal_illustration}.1. Figure \ref{fig:bivnormal_illustration}.1(a)
shows $\hat{Y}_{n}$, which has no datapoint in $B$. The stretched
data cloud $\hat{Y}_{n}/\ell_{n}^{+}(B)$ is shown in Figure \ref{fig:bivnormal_illustration}.1(b),
with $k_{n}=20$ datapoints in $B$; $\ell_{n}^{+}(B)$ equals 0.334.
According to (\ref{eq:estimator-I}), the probability of $B$ is estimated
as $(20/5000)^{1/0.334}=6.6\cdot10^{-8}$. 

To appreciate how this estimator differs from the classical approach,
an estimator similar to (\ref{eq:estimator-I}) but based on the classical
multivariate tail limit (\ref{eq:conv_expmeasure}) was applied as
well: $\hat{\pi}_{n}^{\mathrm{cl}}(B):=(k_{n}/n)\textrm{e}^{-\lambda_{n}(B)}$
with $\lambda_{n}(B):=\inf\{l>0:\:\hat{p}_{n}(\hat{Y}_{n}+l)\in B)\geq k_{n}/n\}$.
It is similar to the estimator considered in \citet{deHaan=000026Sinha}
and \citet{Drees =000026 de Haan} without marginal estimation. Figure
\ref{fig:bivnormal_illustration}.1(c) shows $\hat{Y}_{n}+\lambda_{n}(B)$
with $\lambda_{n}(B)=8.92$; the corresponding probability estimate
equals $5.4\cdot10^{-7}$. The qualitative difference between Figures
\ref{fig:bivnormal_illustration}.1(b) and \ref{fig:bivnormal_illustration}.1(c)
is striking. 

Figure \ref{fig:bivnormal}.2 summarises the results of simulations
with $a_{1}=1,0.5,0.1,0,-0.1,-0.5$ and $a_{2}=1$, in each case with
500 realisations: (a) shows the boundaries of the events considered,
labelled by $a_{1}$; (b) shows the root mean square errors (RMSE)
of the logarithms of the probability estimates, and (c) shows the
bias of the logarithms of the probability estimates. For the bivariate
normal with $\rho<1$, the limiting measure in (\ref{eq:conv_expmeasure})
is concentrated on the boundaries, so the classical estimator is not
expected to do well in this case. Therefore, in addition, a correction
of the classical estimator based on residual tail dependence (\ref{eq:rtd})
was applied \emph{cf.} \citet{Draisma}. 

The results in Figure \ref{fig:bivnormal}.2 indicate that the standard
deviation is generally small in comparison to the bias, despite the
small value of $k_{n}$ used. The two classical estimators perform
better or worse depending on the value of $a_{1}$, but the LDP-based
estimator (\ref{eq:estimator-I}) performs consistently as good as
or better than both classical estimators in all cases. 

\begin{figure}
\label{fig:bivnormal_illustration}\includegraphics[scale=0.28]{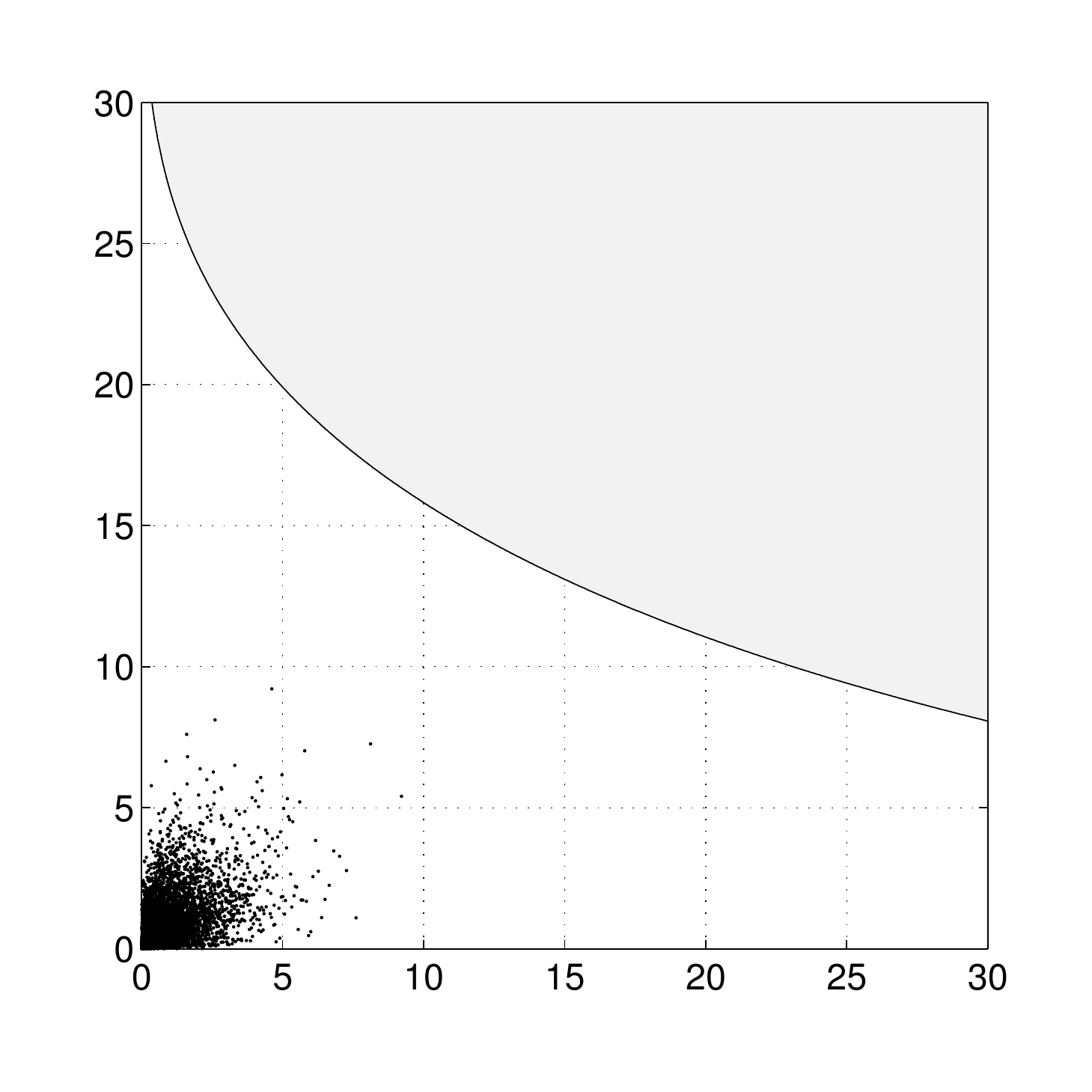}\includegraphics[scale=0.28]{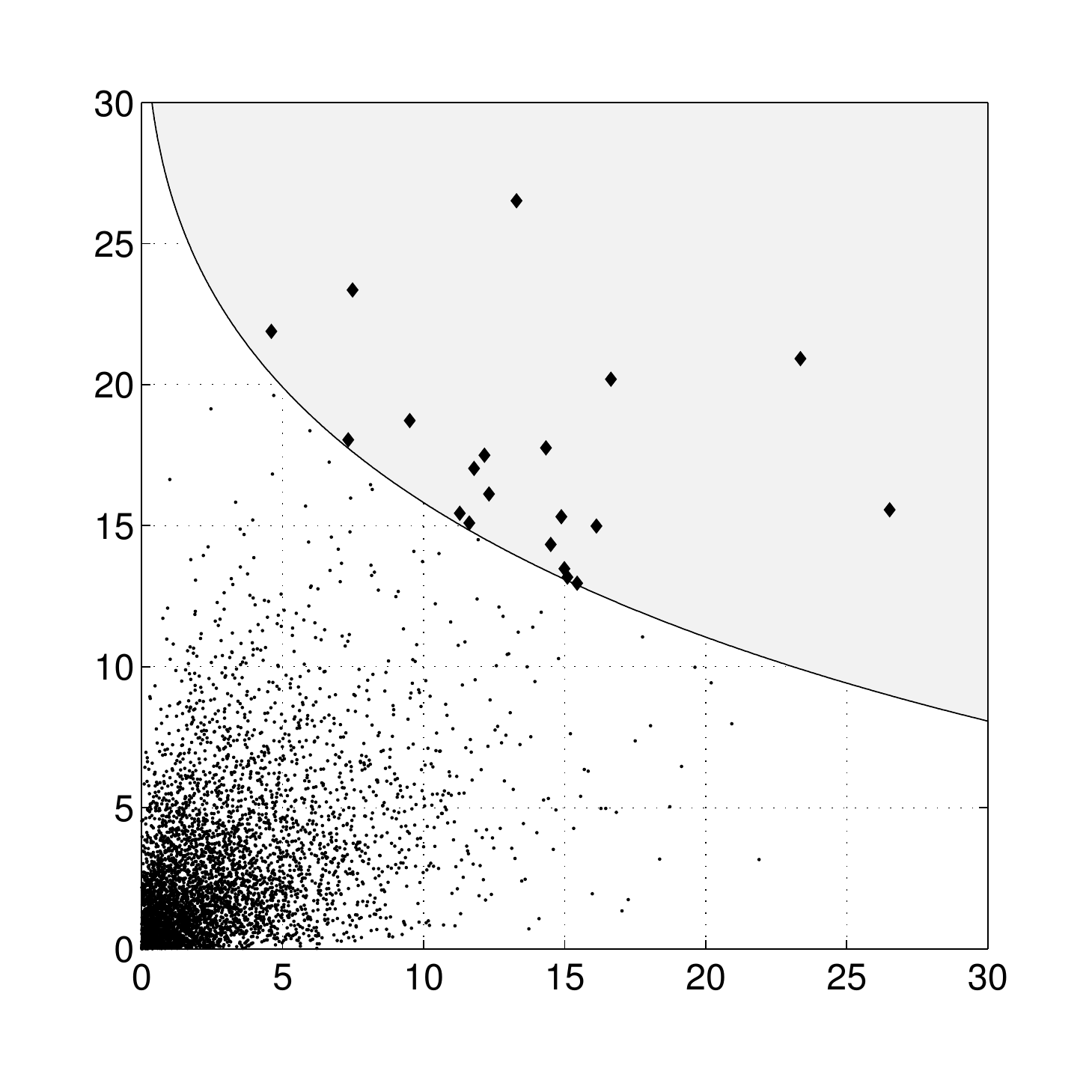}\includegraphics[scale=0.28]{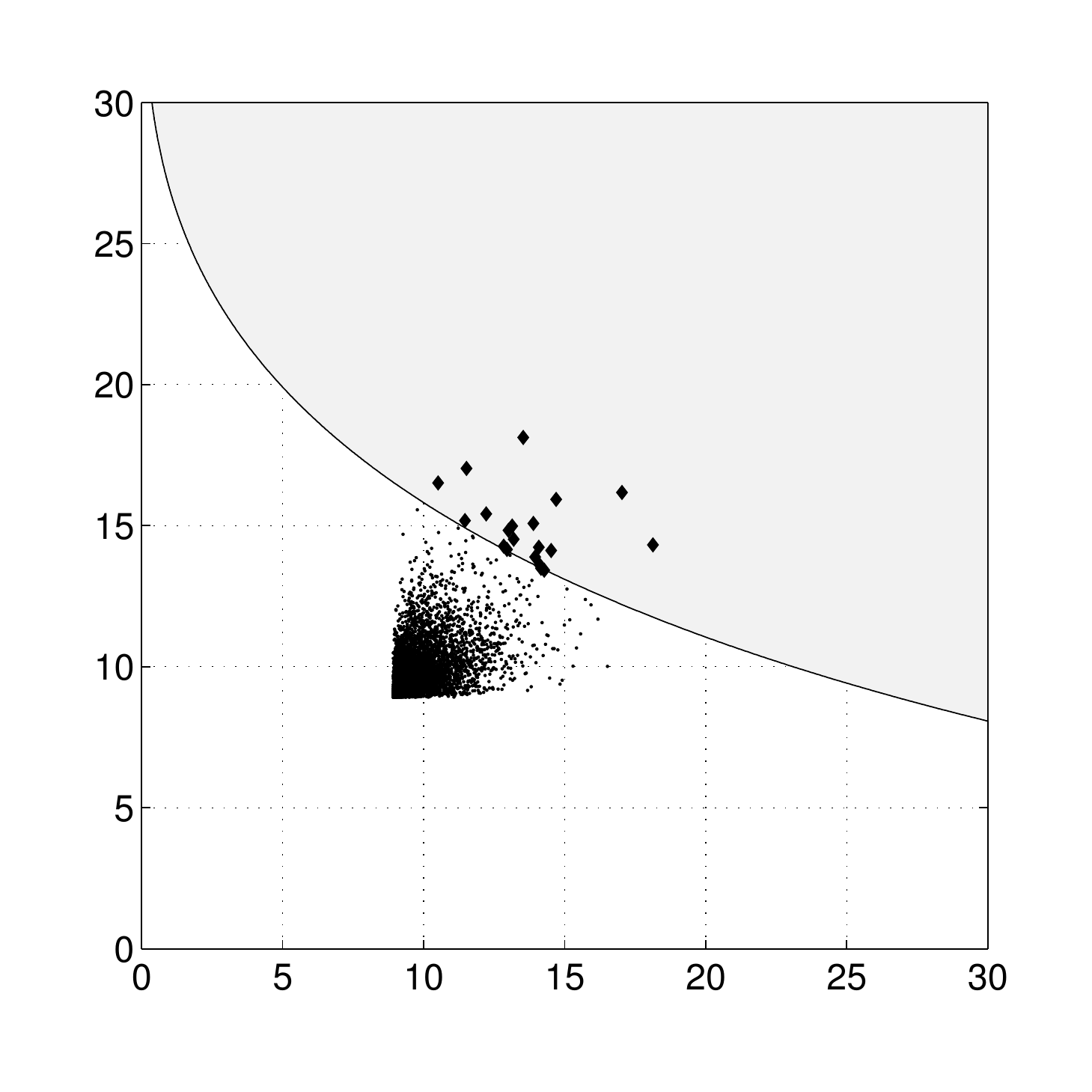}

\protect\caption{Simulation with bivariate normal dependence and exponential marginals
(see text). From left to right: (a) $\hat{Y}_{n}$ (dots) and failure
event $B$ given by (\ref{eq:B_example}) (grey); (b) $\hat{Y}_{n}/\ell_{n}^{+}(B)$
and failure event; (c) the classical analogue $\hat{Y}_{n}+\lambda_{n}(B)$
of (b) (see main text).}
\end{figure}

\begin{figure}
\label{fig:bivnormal}\includegraphics[scale=0.28]{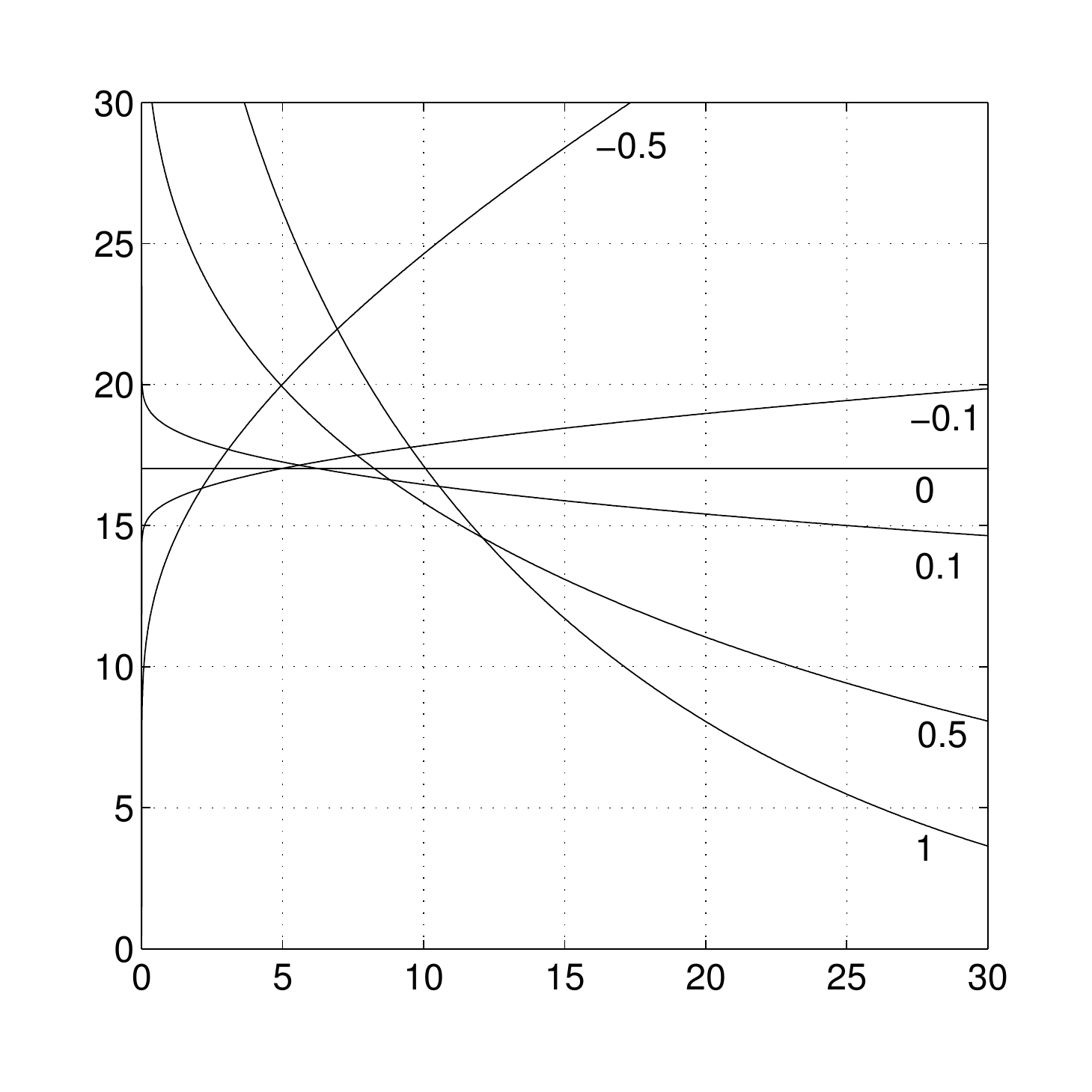}\includegraphics[scale=0.28]{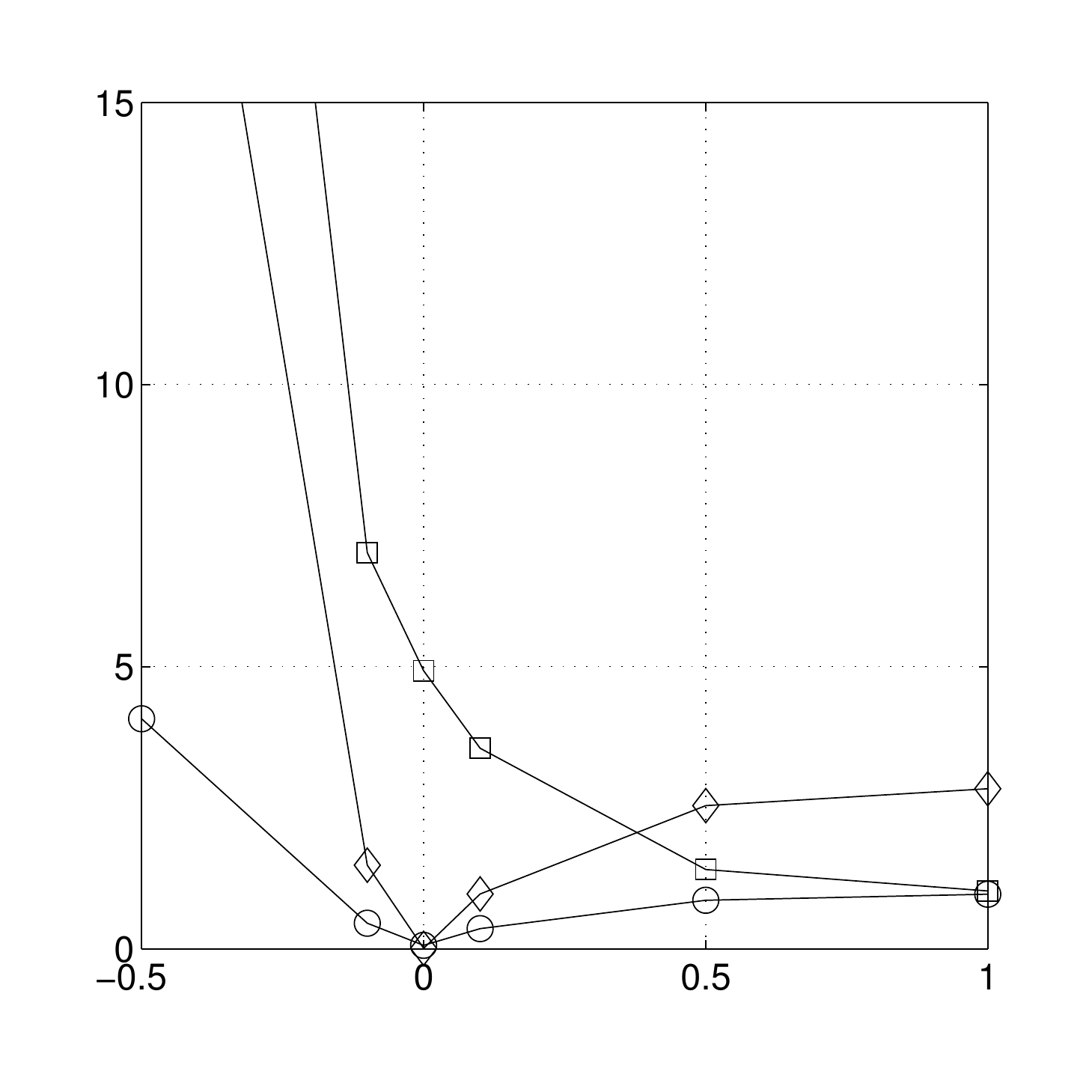}\includegraphics[scale=0.28]{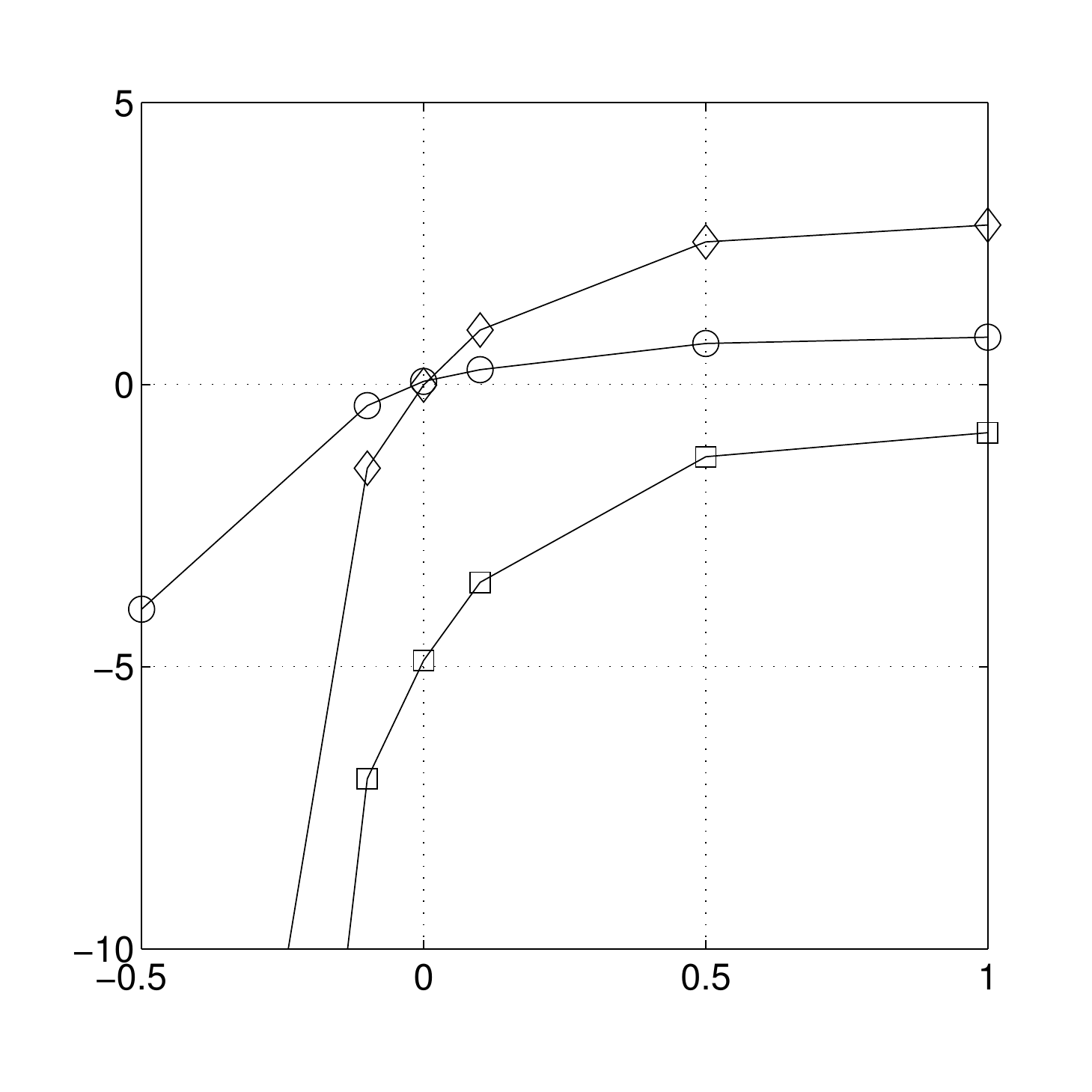}

\protect\caption{Simulations with bivariate normal dependence and exponential marginals
(see text). From left to right: (a) boundaries of failure events (\ref{eq:B_example})
labelled by $a_{1}$, for $a_{2}=1$; (b) RMSE of the logarithm of
probability as function of $a_{1}$ for estimator (\ref{eq:estimator-I})
(circles), its classical analogue (diamonds) and its classical analogue
accounting for residual tail dependence (squares); (c) bias for the
estimators as under (b). }
\end{figure}

The probability estimator (\ref{eq:estimator-I}) can also be applied
to estimate the survival function. This makes it possible to compare
it to the estimator for the survival function proposed in \citet{Wadsworth=000026Tawn}
(Sections 5.1 and 5.2) based on (\ref{eq:short-range-SF}) with $\inf I(A_{a})$
replaced by $\kappa(a)$, estimated using an approach employing the
Hill estimator. With both estimators, the same simulations were carried
out as reported in Section 5.3 of \citet{Wadsworth=000026Tawn}: for
$X$ considered above, estimates of the survival function $F^{c}(x_{1},x_{2})$
were made with $x_{2}=1.5\log n$ and $x_{1}/x_{2}=0.05,0.10,...,0.50$.
With $n=5000$, $k_{n}=20$, $500$ realisations, and $\hat{Y}_{n}$
replaced by the exact $Y=X$ as in \citet{Wadsworth=000026Tawn},
the RMSE of the logarithm of probability for (\ref{eq:estimator-I})
was 11-17\% higher than for the estimator from \citet{Wadsworth=000026Tawn},
which performed similarly to an estimator based on the conditional
probability approach of \citet{HeffernanTawn} (see Section 5.3 of
\citet{Wadsworth=000026Tawn}). This is an encouraging result for
an estimator as simple and widely applicable as (\ref{eq:estimator-I}). 

The final case is a trial application of the estimator (\ref{eq:def_estimator-II})
to an oceanographic dataset, in order to estimate the mean fraction
of time that wave run-up reaches the crest of a fictitious seawall.
Figure \ref{fig:YM6_analysis}.3 (upper right) shows simultaneous
3-hourly values of wave period measured at the offshore site YM6 in
the North Sea and surge level from the nearby harbour of IJmuiden
provided by Rijkswaterstaat, the Netherlands%
\footnote{Wave period is $T_{m_{-1,0}}$ (s); surge is still water level minus
estimated astronomical tide (m).%
}. The dataset covers 24 years ($n=70128$). For this trial, a strongly
simplified version of a model from \citet{TAW} is used to approximate
the run-up height of the 2\% highest waves on the seawall from wave
period and still water level. The set $B$ of wave period/water level
combinations leading to wave run-up exceeding 15m is indicated by
the grey area in the same figure. In the model, the mean depth at
the seawall base is 0m and the seawall has a flat smooth 1:4 slope.
The RMS wave height at the base is approximated by its upper bound
from \citet{Ruessink}. For the water level, we use surge data, ignoring
the astronomical tide. Dependence on wave direction is ignored in
marginals and nearshore wave transformation. Because of all these
simplifications, estimates obtained do not carry concrete relevance
to coastal flood safety.

Quantile estimates for wave period and surge were made using the simple
log-GW-based quantile estimator from \citet{De Valk I}: let $(k_{2,n})$
be a nondecreasing intermediate sequence satisfying that $\limsup_{n\rightarrow\infty}\log k_{2,n}/\log n<1$
and $\lim_{n\rightarrow\infty}k_{2,n}/\log_{2}n=\infty$ (with $\log_{2}$
the iterated logarithm), fix some $\iota>1$, and define
\begin{equation}
k_{i,n}:=\left\lfloor (n/k_{2,n})^{-\iota^{i-2}}n\right\rfloor \quad\textrm{for}\: i\in\{0,1\}.\label{eq:k_j}
\end{equation}

Taking $k_{n}=k_{0,n}$ in (\ref{eq:q_hat}), and
\begin{equation}
\hat{\theta}_{n,j}:=\frac{\log_{2}\frac{X_{j,n-k_{2,n}+1:n}}{X_{j,n-k_{1,n}+1:n}}-\log_{2}\frac{X_{j,n-k_{1,n}+1:n}}{X_{j,n-k_{0,n}+1:n}}}{\log\iota}\quad\textrm{and}\quad\hat{g}_{n,j}:=\frac{\log\frac{X_{j,n-k_{1,n}+1:n}}{X_{j,n-k_{0,n}+1:n}}}{h_{\hat{\theta}_{n,j}}(\iota)},\label{eq:params}
\end{equation}
(\ref{eq:nu_conv-0}) is ensured by Theorem 4 of \citet{De Valk I}.
Like the \citet{PickandsIII} estimator for the extreme value index,
the estimator $\hat{\theta}_{n}$ is based on only three order statistics.
However, its behaviour is entirely different, because the spacings
between $(k_{0,n})$, $(k_{1,n})$ and $(k_{2,n})$ are different. 

\begin{figure}
\label{fig:YM6_analysis}\includegraphics[scale=0.28]{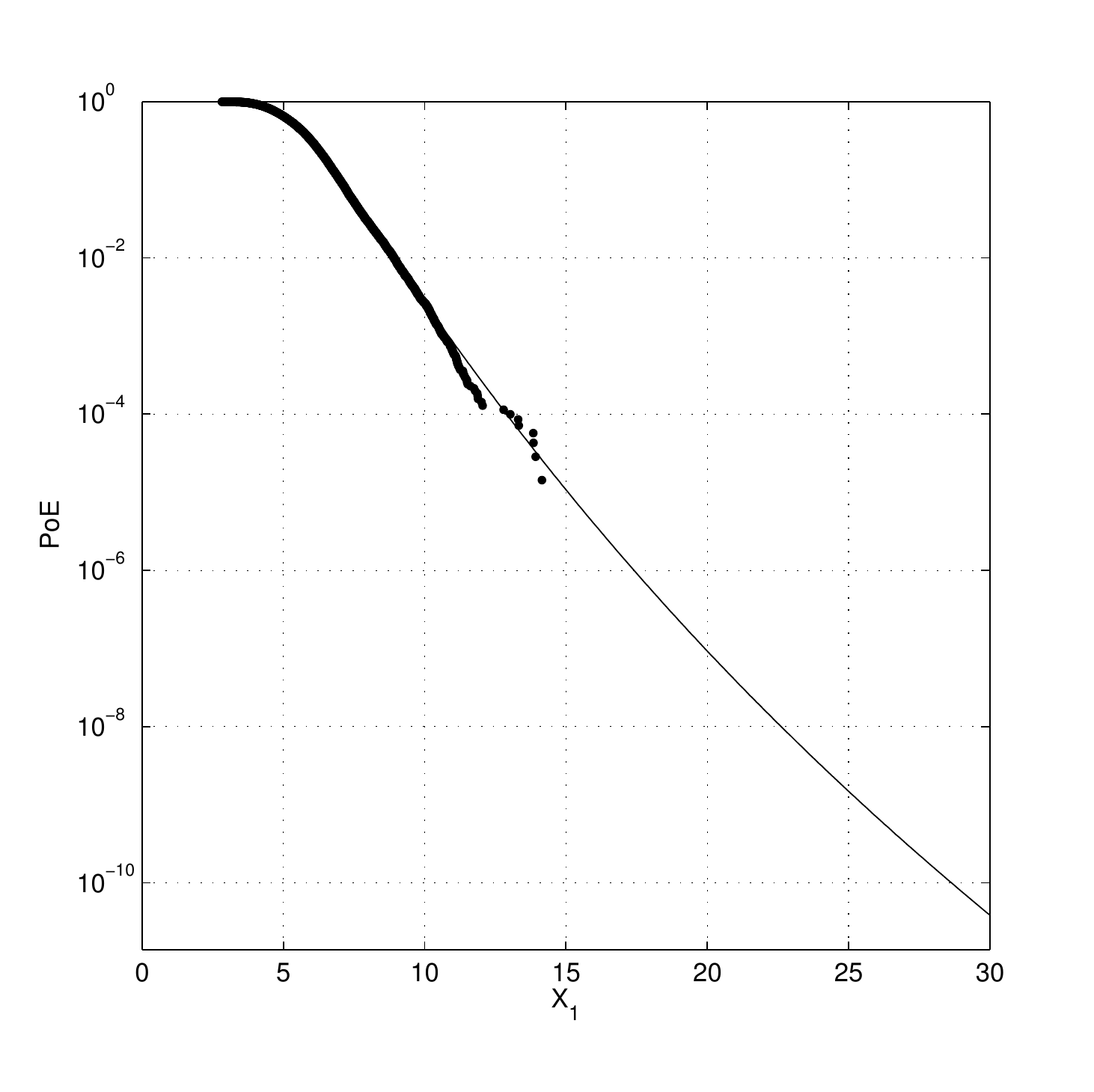}\includegraphics[scale=0.28]{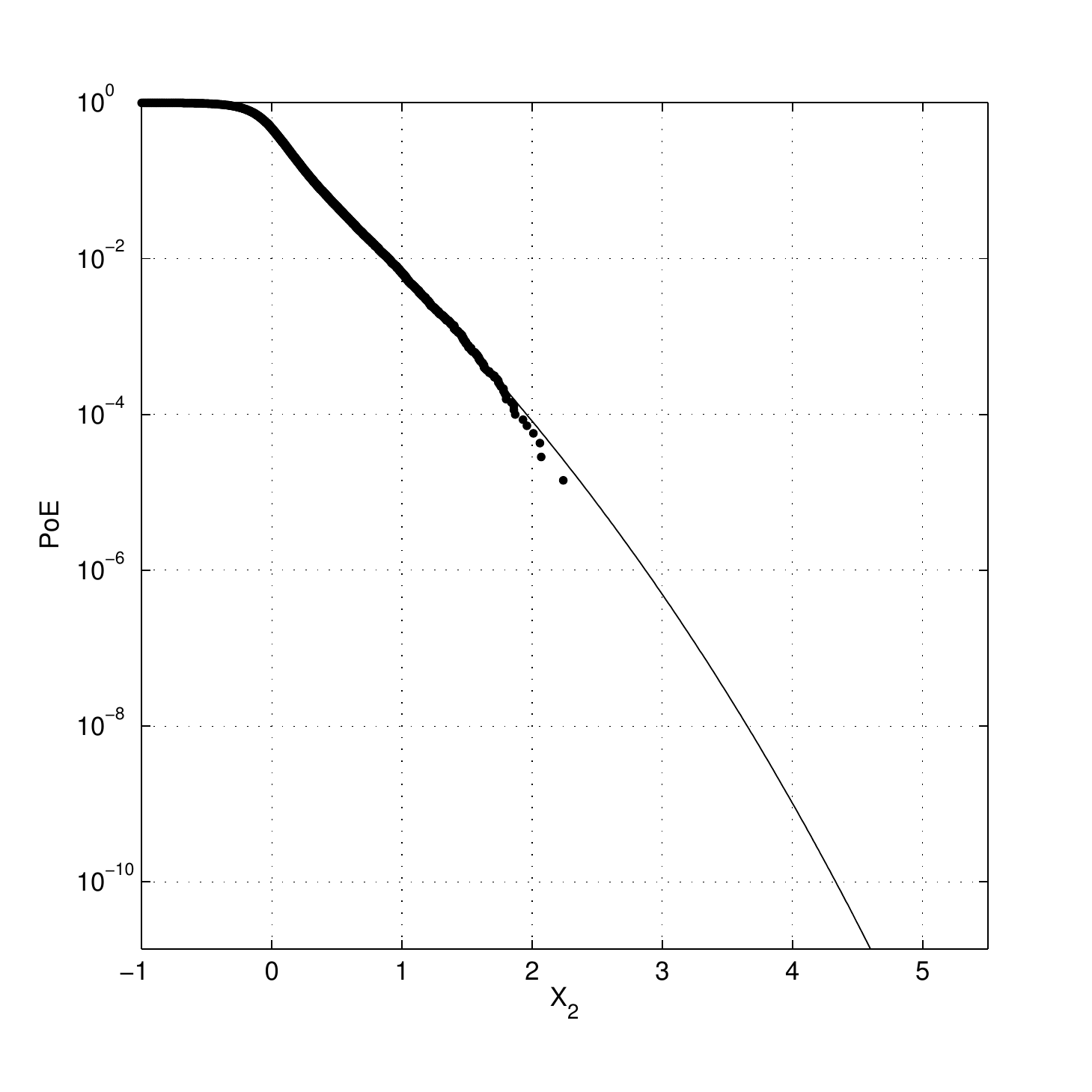}\includegraphics[scale=0.28]{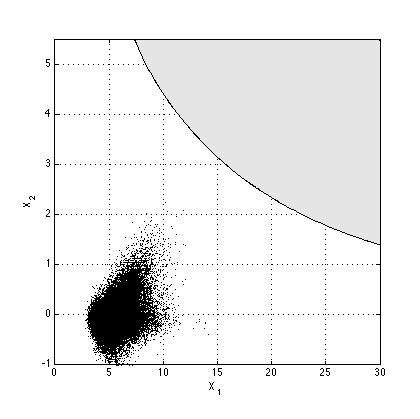}

\includegraphics[scale=0.28]{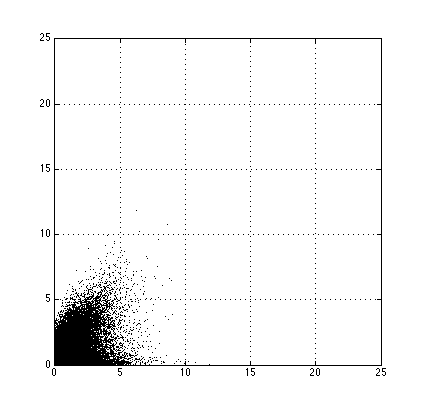}\includegraphics[scale=0.28]{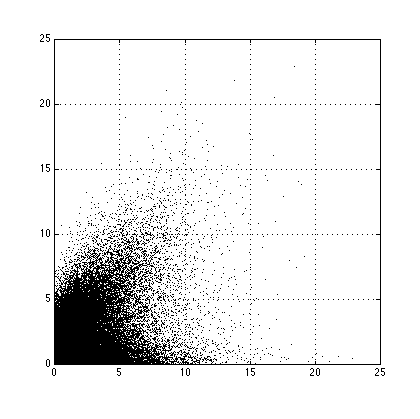}\includegraphics[scale=0.28]{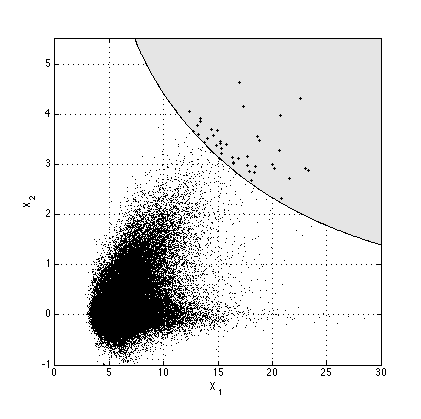}\protect\caption{Marginal log-GW tail estimates for wave period $X_{1}$ (upper left)
and surge level $X_{2}$ (upper middle); sample of $X$ and set $B$
corresponding to wave run-up exceeding 15m (upper right); $\hat{Y}_{n}$
(lower left); $\hat{Y}_{n}/\ell_{n}(B)$ (lower middle); $\hat{Q}_{n}(\hat{Y}_{n}/\ell_{n}(B))$
(lower right); fat dots indicating points with run-up exceeding 15m. }
\end{figure}
 Quantile estimates (\ref{eq:q_hat}) with (\ref{eq:k_j})-(\ref{eq:params}),
$\iota=2$ and $k_{n}=5009$ are shown in the upper left and middle
panels of Figure \ref{fig:YM6_analysis}.3. The lower left panel shows
$\hat{Y}_{n}$, the lower middle panel shows $\hat{Y}_{n}/\ell_{n}(B)$
with $\ell_{n}(B)=1/2.13$; $\hat{Q}_{n}(\hat{Y}_{n}/\ell_{n}(B))$
is shown in the lower right panel. In this case, $\hat{p}_{n}(\hat{Q}_{n}(\hat{Y}_{n}/\ell_{n}(B))\in B)=41/n=5.85\cdot10^{-4}$,
so for the estimator (\ref{eq:def_estimator-II}), $\hat{\pi}_{n}^{\mathrm{II}}(B)=(41/n)^{2.13}=1.3\cdot10^{-7}$,
about 11 hours per 10,000 years. 

Contrary to the assumptions made earlier, the 3-hourly surge and wave
period are serially dependent. Since we are estimating a fraction
of time, serial dependence does not need to invalidate the estimate;
its principal effect is that the estimate is less precise than it
would have been if the process were \emph{iid}. Imposing a minimum
separation of 24 hours between storm events, the 41 datapoints moved
into $B$ in Figure \ref{fig:YM6_analysis}.3 (lower right) represent
18 distinct events, giving a mean duration per event of 6.8 hours.
Using this value, the estimate $\hat{\pi}_{n}^{\mathrm{II}}(B)$ can
be converted to an estimate of the frequency of wave run-up exceeding
15m; its value is $1.7\cdot10^{-4}$ per year. Evidently, this unconventional,
but intuitively appealing variation of the peaks-over-threshold approach
would need formal underpinning by a model of serial dependence in
order to be taken seriously.

\section{\label{sec:Discussion}Discussion}

Like similar methods in the classical setting (\emph{e.g.} \citet{deHaan=000026Sinha,Drees =000026 de Haan,Draisma}),
the estimators (\ref{eq:estimator-I}) and (\ref{eq:def_estimator-II})
exploit homogeneity of a function describing tail dependence; in this
case, homogeneity (\ref{eq:I_mexp_symm}) of the rate function $I$.
This offers the advantage that no explicit estimate of $I$ is required.
However, in certain situations, there may be good reasons to estimate
$I$, such as if for a given random vector $X$, probabilities need
to be estimated for multiple sets in a consistent and reproducible
manner. Therefore, estimation of $I$ remains a topic deserving elaboration. 

The limitation of $A$ to continuity sets of $I$ in Theorems \ref{thm: estimator-I}
and \ref{thm: estimator-II} is less restrictive than it may seem,
since the homogeneity of $I$ makes continuity sets rather common,
as noted in Section \ref{sec:Dependence}. The other conditions on
$A$ are weak. 

To prove convergence of the estimators under such weak conditions,
local uniformity in $d$ of convergence in (\ref{eq:sampling_LDL_unif_A})
is employed, which is derived from uniformity in $d$ of convergence
in (\ref{eq:sample_LDL}). The latter also ensures local uniformity
in $\lambda$ of convergence in (\ref{eq:sampling_LDL_unif_A}), and
therefore local uniformity in $\tau$ of convergence of the estimators
in (\ref{eq:claim_estimator-I}) and (\ref{eq:claim_estimator-II}).
In practice, this means that if such an estimator applied to a given
dataset produces a fair estimate of $P(X\in B_{0})$ for some $B_{0}\subset\mathbb{R}^{m}$,
then it may also be applied with confidence to the same dataset to
estimate the probability of $B_{1}\subset\mathbb{R}^{m}$ such that
$P(X\in B_{1})\geq P(X\in B_{0})^{\tau}$ for $\tau>1$ not too large,
\emph{e.g.} $\tau=2$. If $P(X\in B_{0})\ll1$, \emph{e.g.} $P(X\in B_{0})=0.01$,
this amounts to extrapolation over several additional orders of magnitude
in probability. How far one can extrapolate in practice will depend
on the rates of convergence to the marginal log-GW tail limits and
in (\ref{eq:LDP_mexp}), which will differ from case to case. 

Convergence of log-probability ratios as in (\ref{eq:claim_estimator-I})
and (\ref{eq:claim_estimator-II}) is typical for the probability
range (\ref{eq:p_n}). A stronger notion of convergence might be desirable,
but would require restrictive additional assumptions which would be
hard to justify in applications. Rather, it is recommended to diagnose
bias in estimates and take this into account in estimates of uncertainty.
For this reason, modelling of bias and rate of convergence deserves
further study.

Deriving asymptotic error distributions will require additional assumptions
beyond those for Theorems \ref{thm: estimator-I} and \ref{thm: estimator-II}
and methods quite different from those employed in the present article.
Because it is complex (see \emph{e.g}. \citet{Drees =000026 de Haan}
for a comparable problem), this important topic needs to be left for
a follow-up study as well.

The theory is readily extended from events involving a high value
of at least one of the variables to events extreme ``in any direction'',
by replacing the exponential distribution as standard marginal by
the Laplace distribution \emph{cf.} \citet{Keef}. Other choices of
standard marginal are also possible, with minor adaptations to theory
and estimator. 

Furthermore, the main results of this article can be generalised straightforwardly
from a random vector in $\mathbb{R}^{m}$ to a random element of $\mathcal{C}_{b}(K)$,
the continuous functions on a compact metric space $K$. Classical
multivariate extreme value theory and estimation have been generalised
to this setting earlier; see \textit{e.g.} \citet{de Haan=000026Lin},
Part III of \citet{Laurens  boek}, \citet{EinmahlLin} and \citet{Ferreira=000026DeHaan}.
For the theory presented here, the main difference between the $\mathbb{R}^{m}$
setting and the $\mathcal{C}_{b}(K)$ setting is that in the latter,
exponential tightness of $\{P(Y/y\in\cdot),\: y>0\}$ no longer follows
from the exponential marginals; it is an independent assumption. In
loose terms, it entails that all but an exponentially small probability
mass is concentrated on equicontinuous sets of functions in $\mathcal{C}_{b}(K)$
(see \emph{e.g.} \citet{Dembo=000026Zeitouni}).

\section{\label{sec:lemmas}Proofs and lemmas}

\subsection{\label{sub:proof_ldp_complete}Proof of Theorem \ref{thm:ldp_complete}
and Corollary \ref{cor:ldp_complete}}

Convergence in (\ref{eq:logq-ERV}) is locally uniform in $\lambda$
(\textit{e.g.} \citet{Laurens  boek}, B.1.4 and B.2.9), so 
\begin{equation}
\lim_{y\rightarrow\infty}\sup_{\lambda\in[\Lambda^{-1},\Lambda]}\max_{j\in\{1,..,m\}}\left|h_{\theta j}^{-1}\left(\frac{\log q_{j}(y\lambda)-\log q_{j}(y)}{g_{j}(y)}\right)-\lambda\right|=0\quad\forall\Lambda>1.\label{eq:GW_K_unif}
\end{equation}

For every $y>0$, $\tilde{Q}_{y}$ is injective, so we can define
the random vector
\begin{equation}
\tilde{Y}_{y}:=\tilde{Q}_{y}^{-1}(X)=\tilde{Q}_{y}^{-1}Q(Y)\quad a.s.\label{eq:Ytilde_def}
\end{equation}
with $Y$ defined by (\ref{eq:Ydef}). By (\ref{eq:GW_K_unif}), there
exists almost surely for every $\Lambda>1$ and $\delta>0$ some $y_{\Lambda,\delta}>0$
such that for all $y\geq y_{\Lambda,\delta}$, $\left\Vert \tilde{Y}_{y}-Y\right\Vert _{\infty}>\delta y$
implies $\left\Vert Y\right\Vert _{\infty}>\Lambda y$. Therefore,
by (\ref{eq:sup_limit}), since $\Lambda>1$ is arbitrary, 
\begin{equation}
\limsup_{y\rightarrow\infty}\frac{1}{y}\log P(\left\Vert \tilde{Y}_{y}-Y\right\Vert _{\infty}>\delta y)\leq-\Lambda\quad\forall\varLambda>0.\label{eq:EE}
\end{equation}
By Theorem \ref{thm:ldp_exp}, the distribution functions of $\{Y/y,\: y>0\}$
satisfy the LDP (\ref{eq:LDP_mexp}) with good rate function $I$,
so (\ref{eq:EE}) implies the same for the distribution functions
of $\{\tilde{Y}_{y}/y,\: y>0\}$; see Theorem 4.2.13 of \citet{Dembo=000026Zeitouni}.
Therefore, (\ref{eq:LDP_mexp_approx}) follows from (\ref{eq:Ytilde_def}).
To prove (b), note that by (\ref{eq:LDP_mexp_approx}) and (\ref{eq:I_marg}),
\[
\lim_{y\rightarrow\infty}\frac{1}{y}\log\left(1-F_{j}\left(q_{j}(y)\textrm{e}^{g_{j}(y)h_{\theta_{j}}(\lambda)}\right)\right)=-\lambda\quad\forall\lambda\geq1
\]
for $j\in\{1,..,m\}$, so (\ref{eq:logq-ERV}) holds with $q=q_{j}$,
$g=g_{j}$ and $\theta=\theta_{j}$ for $j=1,..,m$. As in the proof
of (a), this implies (\ref{eq:EE}). Moreover, (\ref{eq:I_marg})
implies (\ref{eq:I_lb}), so $I$ is a good rate function. An application
of Theorem 4.2.13 of \citet{Dembo=000026Zeitouni} completes the proof
of the theorem. 

For the Corollary, note that for $A\subset[1,\infty)^{m}$, $P(X\in\tilde{Q}_{y}(yA))$
in (\ref{eq:LDP_mexp_approx}) is equal to
\[
P\left(\left(\frac{\log X_{1}-\log q_{1}(y)}{g_{1}(y)},..,\frac{\log X_{m}-\log q_{m}(y)}{g_{m}(y)}\right)\in H_{\theta}(A)\right)
\]
by (\ref{eq:qtilde-def}). Therefore, by the contraction principle
(see Theorem 4.2.1 in \citet{Dembo=000026Zeitouni}), (\ref{eq:LDP_mexp_tail})
follows from (\ref{eq:LDP_mexp_approx}).

\subsection{\label{sub:Proof-estimator-II}Proof of Theorem \ref{thm: estimator-II}}

For convenience, the following shorthand notation will be used:
\begin{equation}
\hat{\mu}_{n,l}(A):=\hat{p}_{n}(\hat{Q}_{n}(\hat{Y}_{n}/l)\in Q(y_{n}A)),\label{eq:mu_hat_def}
\end{equation}
and $l_{n}(A):=\ell_{n}(Q(y_{n}A))$, $l_{n}^{+}(A):=\ell_{n}^{+}(Q(y_{n}A))$,
$l_{n}^{-}(A):=\ell_{n}^{-}(Q(y_{n}A))$. 
\begin{svmultproof}
By Theorem \ref{thm:ldp_exp}, $Y$ defined by (\ref{eq:Ydef}) satisfies
the LDP (\ref{eq:LDP_mexp}) with good rate function $I$. As $\xi\in(0,(1-c')^{-1})$
with $c'$ as in (\ref{eq:k0_for_estimator}), take any $\Delta\in(\xi/\inf I(A),(1-c')^{-1}/\inf I(A))$.
Fixing an arbitrary $\Lambda>1$, then by Lemma \ref{lem: sampling_LDL},
for every $\delta\in(0,\Delta)$ (see (\ref{eq:mu_hat_def})),
\begin{equation}
\lim_{n\rightarrow\infty}\sup_{\lambda\in[\Lambda^{-1},\Lambda],\: d\in[\delta,\Delta]}\left|y_{n}^{-1}\log\hat{\mu}_{n,d/\lambda}(A\lambda)+d\inf I(A)\right|=0\quad a.s.\label{eq:sampling_LDL_unif_A}
\end{equation}
and
\begin{equation}
\limsup_{n\rightarrow\infty}\sup_{\lambda\in[\Lambda^{-1},\Lambda],\: d>\Delta}y_{n}^{-1}\log\hat{\mu}_{n,d/\lambda}(A\lambda)\leq-\Delta\inf I(A)<-\zeta\quad a.s.\label{eq:sampling_ubound_unif_A}
\end{equation}

Choosing $\delta<\vartheta/\inf I(A)$, since $\Delta>\xi/\inf I(A)$,
we observe that
\[
d\inf I(A)\begin{cases}
\leq\xi & \textrm{if}\: d\in[\delta,\xi/\inf I(A)]\subset[\delta,\Delta]\\
>\xi & \textrm{if}\: d\in(\xi/\inf I(A),\Delta]\subset[\delta,\Delta]
\end{cases}
\]
in (\ref{eq:sampling_LDL_unif_A}). Therefore, with (\ref{eq:sampling_ubound_unif_A}),
using (\ref{eq:l+def}), 
\begin{equation}
\lim_{n\rightarrow\infty}\sup_{\lambda\in[\Lambda^{-1},\Lambda]}\left|\lambda l_{n}^{+}(A\lambda)-\xi/\inf I(A)\right|=0\quad a.s.\label{eq:l+conv}
\end{equation}
and similarly, using (\ref{eq:l-def}), we find that
\begin{equation}
\lim_{n\rightarrow\infty}\sup_{\lambda\in[\Lambda^{-1},\Lambda]}\left|\lambda l_{n}^{-}(A\lambda)-\vartheta/\inf I(A)\right|=0\quad a.s.\label{eq:l-conv}
\end{equation}

By (\ref{eq:l+conv}), (\ref{eq:l-conv}), (\ref{eq:l_G_def}) and
(\ref{eq:sampling_LDL_unif_A}),
\begin{equation}
\lim_{n\rightarrow\infty}\sup_{\lambda\in[\Lambda^{-1},\Lambda]}\left|y_{n}^{-1}l_{n}^{-1}(A\lambda)\log\hat{\mu}_{n,l_{n}(A\lambda)}(A\lambda)+\lambda\inf I(A)\right|=0\quad a.s.\label{eq:l_conv}
\end{equation}
or equivalently, by (\ref{eq:def_estimator-II}),
\begin{equation}
\lim_{n\rightarrow\infty}\sup_{\lambda\in[\Lambda^{-1},\Lambda]}\left|y_{n}^{-1}\log\hat{\pi}_{n}^{\mathrm{II}}(Q(y_{n}A\lambda))+\lambda\inf I(A)\right|=0\quad a.s.\label{eq:Phat_conv}
\end{equation}

Since (\ref{eq:LDP_mexp_Q}) holds with $\inf I(A^{o})=\inf I(\bar{A})$
and $\inf I(A)>0$, by (\ref{eq:I_mexp_symm}) and (\ref{eq:Phat_conv}),
\[
\lim_{n\rightarrow\infty}\sup_{\lambda\in[\Lambda^{-1},\Lambda]}\left|\frac{\log\hat{\pi}_{n}^{\mathrm{II}}(Q(y_{n}A\lambda))}{\log P(X\in Q(y_{n}A\lambda))}-1\right|=0\quad a.s.,
\]
and (\ref{eq:claim_estimator-II}) follows from (\ref{eq:k0_for_estimator}),
since $\Lambda>1$ is arbitrary. 
\end{svmultproof}

\subsection{\label{sub:Proof-P-estimator-1}Proof of Theorem \ref{thm: estimator-I}}

Following the proof of Theorem \ref{thm: estimator-II} in Subsection
\ref{sub:Proof-estimator-II}, (\ref{eq:l+conv}) and (\ref{eq:estimator-I})
yield
\[
\lim_{n\rightarrow\infty}\sup_{\lambda\in[\Lambda^{-1},\Lambda]}\left|y_{n}^{-1}\log\hat{\pi}_{n}^{\mathrm{I}}(Q(y_{n}A\lambda))+\lambda\inf I(A)\right|=0
\]
and the result (\ref{eq:claim_estimator-I}) follows as in the proof
of Theorem \ref{thm: estimator-II}.

\subsection{Lemmas}
\begin{lemma}
\label{lem: for En}Let $Y$ be a random vector in $[0,\infty)^{m}$
with standard exponential marginals satisfying the LDP (\ref{eq:LDP_mexp})
with good rate function $I$, and $Y^{(1)},Y^{(2)},...$ a sequence
of iid copies of $Y$. Let the Borel set $A\subset[0,\infty)^{m}$
be a continuity set of $I$ satisfying $\inf I(A)\in(0,\infty)$.
If $(y_{n}>0)$ and $\Delta>0$ satisfy $\lim_{n\rightarrow\infty}y_{n}=\infty$
and\textup{
\begin{equation}
\Delta<\liminf_{n\rightarrow\infty}\frac{\log n}{y_{n}\inf I(A)}<\infty,\label{eq:cond_for_En}
\end{equation}
}then with $\hat{p}_{n}$ defined by (\ref{eq:p_hat_def}),
\begin{equation}
\lim_{n\rightarrow\infty}\sup_{d\in[\delta,\Delta]}\bigl|y_{n}^{-1}\log\hat{p}_{n}(Y\in dAy_{n})+d\inf I(A)\bigr|\rightarrow0\quad a.s.\quad\forall\delta\in(0,\varDelta)\label{eq:sample_LDL}
\end{equation}
and
\begin{equation}
\limsup_{n\rightarrow\infty}\sup_{d>\Delta}y_{n}^{-1}\log\hat{p}_{n}(Y\in dAy_{n})\leq-\Delta\inf I(A)\quad a.s.\label{eq:sample_ubound}
\end{equation}
\end{lemma}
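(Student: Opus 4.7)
The plan is to first establish the uniform deterministic LDP limit $y_n^{-1}\log p_n(d)\to -d\inf I(A)$ for $p_n(d):=P(Y\in dAy_n)$ on $d\in[\delta,\Delta]$, and then transfer it to $\hat p_n$ via a Chernoff bound plus Borel--Cantelli. In both steps, uniformity in $d$ is obtained by sandwiching $dA$ between two $d$-independent envelope sets indexed by a finite grid of $[\delta,\Delta]$.

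For the deterministic part, fix a grid $\delta=d_0<d_1<\cdots<d_N=\Delta$ of mesh $\eta$ and, on each subinterval $[d_i,d_{i+1}]$, bracket $dA$ by $S_i\subset dA\subset A_i^+$. The upper envelope is $A_i^+:=\bigcup_{t\in[d_i,d_{i+1}]}tA$; homogeneity (\ref{eq:I_mexp_symm}) and the continuity-set property force $\inf I(\overline{A_i^+})\geq d_i\inf I(A)$, since every $x\in\overline{A_i^+}$ factors as $x=ty$ with $t\in[d_i,d_{i+1}]$ and $y\in\bar A$ (boundedness of $t_n$ lets one extract a convergent subsequence of $y_n=x_n/t_n$), whence $I(x)=tI(y)\geq d_i\inf I(\bar A)=d_i\inf I(A)$. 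For the lower envelope, the continuity-set property with $\inf I(A)<\infty$ provides $x_0\in A^o$ with $I(x_0)<\inf I(A)+\eta$ and $r>0$ with $B_r(x_0)\subset A$; setting $S_i:=B_{d_ir/2}(d_ix_0)$, one verifies $S_i\subset B_{dr}(dx_0)\subset dA$ for every $d\in[d_i,d_{i+1}]$ provided $\eta\|x_0\|\leq d_ir/2$, and $\inf I(S_i)\leq I(d_ix_0)=d_iI(x_0)\leq d_i(\inf I(A)+\eta)$. Applying (\ref{eq:LDP_mexp}) (upper bound to $A_i^+$, lower bound to $S_i$) and refining via $\eta\downarrow 0$ then delivers $\sup_{d\in[\delta,\Delta]}|y_n^{-1}\log p_n(d)+d\inf I(A)|\to 0$.

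For the sampling step, it suffices to control $\hat p_n(Y/y_n\in C)$ for the finitely many envelope sets $C\in\{A_i^+\}\cup\{S_i\}$: $n\hat p_n(Y/y_n\in C)$ is Binomial$(n,P(Y/y_n\in C))$. Hypothesis (\ref{eq:cond_for_En}) yields $\log n>y_n(\Delta+\epsilon_0)\inf I(A)$ eventually for some $\epsilon_0>0$, and since $\inf I(C)\leq\Delta(\inf I(A)+\eta)$ for each envelope, $nP(Y/y_n\in C)\geq n^\beta$ eventually for some $\beta>0$. A Chernoff bound gives $P(|\hat p_n(C)/P(C)-1|>\epsilon)\leq 2\exp(-cn^\beta)$, which is summable, and Borel--Cantelli yields $y_n^{-1}\log(\hat p_n(C)/P(C))\to 0$ almost surely. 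Combining with the deterministic limit through $\hat p_n(S_i)\leq\hat p_n(dA)\leq\hat p_n(A_i^+)$ closes the sandwich uniformly in $d\in[\delta,\Delta]$ and yields (\ref{eq:sample_LDL}). For (\ref{eq:sample_ubound}), run the same concentration argument on the single upper envelope $[\Delta,\infty)A$, which contains $dA$ for every $d\geq\Delta$; this gives $\limsup_n y_n^{-1}\log\hat p_n([\Delta,\infty)A)\leq -\Delta\inf I(A)$ almost surely.

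The main technical obstacle is the simultaneous design of the lower envelopes $S_i$: they must be open, lie inside $dA$ for every $d\in[d_i,d_{i+1}]$, have $\inf I(S_i)$ close to $d_i\inf I(A)$, and be large enough that $nP(Y/y_n\in S_i)$ still grows polynomially in $n$ so Borel--Cantelli applies. Homogeneity of $I$ together with the continuity-set property of $A$ make these three requirements compatible, but they force the grid mesh to scale with $d_i$, which is ultimately why the strict lower endpoint $\delta>0$ appears in (\ref{eq:sample_LDL}).
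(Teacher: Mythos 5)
Your proof is correct, and it follows a genuinely different route from the paper's. The paper reduces the problem to a univariate one by introducing the random variable $v := \inf\{w>0 : Yw \in \mathcal{A}\}$ with $\mathcal{A} := \cup_{\lambda\geq 1}(\lambda A)$, observes that $-\log G(1/\cdot)$ for the distribution function $G$ of $v$ is regularly varying of index $1$, obtains uniformity in $d$ from Bingham et al.'s uniform convergence theorem for regular variation, and transfers to the empirical measure via Wellner's (1978) result on a.s. convergence of the empirical-to-true ratio. It then recovers the lower bound for $A$ itself by constructing an auxiliary continuity set $\mathcal{B}$ with $x_\varepsilon \in \mathcal{B}\setminus(\mathcal{B}\eta)\subset A^o$. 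You instead keep the multivariate picture throughout: you bracket $dA$ by a finite family of $d$-independent envelope sets $S_i\subset dA\subset A_i^+$, get the deterministic uniform LDP limit from homogeneity of $I$ plus the continuity-set property, and handle sampling error with a Chernoff bound and Borel--Cantelli applied to each of the finitely many envelopes, using the hypothesis $\Delta<\liminf_n \log n/(y_n\inf I(A))$ to guarantee $nP(Y/y_n\in C)\geq n^\beta$ eventually. Your route is more elementary (no appeal to Wellner or to the uniform convergence theorem for regular variation) but carries a heavier bookkeeping burden in the envelope construction; the paper's route is shorter because the reduction to $v$ puts everything into a one-dimensional regular-variation framework where off-the-shelf results apply, at the cost of requiring the auxiliary construction of $\mathcal{B}$ to return from $\mathcal{A}$ to $A$.

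One small notational caveat worth cleaning up in your writeup: you use $\eta$ both as the grid mesh and as the tolerance in $I(x_0)<\inf I(A)+\eta$, but the mesh constraint $\eta\|x_0\|\leq d_i r/2$ involves $x_0$ and $r$, which themselves depend on the choice of tolerance; the dependency should be broken by introducing two separate parameters (first choose the tolerance $\varepsilon$, which fixes $x_0$ and $r$, then choose $\eta\leq \delta r/(2\|x_0\|)$). Also, in the lower-envelope step, the LDP lower bound controls $\inf I(S_i^o)$ rather than $\inf I(S_i)$; since $S_i$ is open these coincide, but the statement should be phrased in terms of the interior to match the form of the LDP. Neither of these affects the substance of the argument.
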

\begin{svmultproof}
Let $\mathcal{A}:=\cup_{\lambda\geq1}(\lambda A)$; by (\ref{eq:I_mexp_symm}),
$\mathcal{A}$ is a continuity set of $I$ satisfying $\inf I(\mathcal{A})=\inf I(A)<\infty$.
Define the random variable
\begin{equation}
v:=\inf\{w>0:\: Yw\in\mathcal{A}\}\label{eq:omega_def}
\end{equation}
with $\inf\{\emptyset\}:=\infty$, and let $G$ be its distribution
function. Since $\cup_{\lambda\geq1}(\mathcal{A}\lambda)\subset\mathcal{A}$,
$Y\in\mathcal{A}^{o}z$ $\Rightarrow$ $v\leq z^{-1}$ $\Rightarrow$
$Y\in\mathcal{\bar{A}}z$ for every $z>0$, so by (\ref{eq:LDP_mexp})
and (\ref{eq:I_mexp_symm}),
\begin{equation}
\lim_{y\rightarrow\infty}y^{-1}\log G(w/y)=-w^{-1}\inf I(\mathcal{A})\quad\forall w>0.\label{eq:w_lim}
\end{equation}

Therefore, since $\inf I(\mathcal{A})\in(0,\infty)$, $-\log G(1/\textrm{Id})\in RV_{\{1\}}$,
so by \citet{Bingham} (Theorem 1.5.2) and (\ref{eq:w_lim}) again,
for every $a>0$,
\begin{equation}
\lim_{y\rightarrow\infty}\sup_{w\geq a}\bigl|y^{-1}\log G(w/y)+w^{-1}\inf I(\mathcal{A})\bigr|=0.\label{eq:lambda_lim}
\end{equation}

By (\ref{eq:w_lim}), there is for every $\varepsilon>0$ an $n_{\varepsilon}\in\mathbb{N}$
such that for all $n\geq n_{\varepsilon}$, 
\begin{equation}
nG(a/y_{n})\geq\textrm{e}^{\log n-(\varepsilon+a^{-1}\inf I(\mathcal{A}))y_{n}}\label{eq:nF}
\end{equation}

Taking $a=1/\Delta$, then by (\ref{eq:cond_for_En}), $\varepsilon>0$
can be chosen small enough that the exponent in (\ref{eq:nF}) eventually
exceeds $\varepsilon\log n$. Therefore,
\begin{equation}
\lim_{n\rightarrow\infty}nG(a/y_{n})/\log n=\infty.\label{eq:Flim}
\end{equation}

With $G^{-1}$ the left-continuous inverse of $G$, almost surely
$v^{(i)}=G^{-1}(\mathcal{U}^{(i)})$ for all $i\in\mathbb{N}$, with
$\mathcal{U}^{(1)},\mathcal{U}^{(2)},...$ independent and uniformly
distributed on $(0,1)$, so almost surely (see def. (\ref{eq:p_hat_def})),
$\hat{p}_{n}(v\leq w/y_{n})=\hat{p}_{n}(\mathcal{U}\leq G(w/y_{n}))$
for all $n\in\mathbb{N}$ and all $w\geq a$. Therefore, by \citet{Wellner}
(Corollary 1) and (\ref{eq:Flim}),
\begin{equation}
\lim_{n\rightarrow\infty}\sup_{w\geq a}\bigl|\log\hat{p}_{n}(v\leq w/y_{n})-\log G(w/y_{n})\bigr|=0\quad a.s.\label{eq:Ulim}
\end{equation}
and since $v\leq w/y_{n}\Rightarrow$$Y\in\mathcal{A}y_{n}/(wl)$
$\Rightarrow$ $v\leq wl/y_{n}$ for all $l>1$ and $w>0$, using
(\ref{eq:lambda_lim}) and (\ref{eq:I_mexp_symm}), as $a=1/\Delta$,
\begin{equation}
\lim_{n\rightarrow\infty}\sup_{d\in(0,\Delta]}\bigl|y_{n}^{-1}\log\hat{p}_{n}(Y\in d\mathcal{A}y_{n})+d\inf I(\mathcal{A})\bigr|=0\quad a.s.\label{eq:sample_LDL-1}
\end{equation}

Therefore, as $A\subset\mathcal{A}$ and $\inf I(\mathcal{A})=\inf I(A)$,
\begin{equation}
\limsup_{n\rightarrow\infty}\sup_{d\in(0,\Delta]}y_{n}^{-1}\log\hat{p}_{n}(Y\in dAy_{n})+d\inf I(A)\leq0\quad a.s.\label{eq:sample_A_ineq_up}
\end{equation}

$A$ is a continuity set of $I$ and $I$ satisfies (\ref{eq:I_mexp_symm}),
so there is for every $\varepsilon>0$ a point $x_{\varepsilon}\in A^{o}$
such that $I(x_{\varepsilon})<\inf I(A)+\varepsilon$. Let $\varepsilon>0$
and $\eta>1$ be such that $\Delta\eta(\inf I(A)+\varepsilon)<\liminf_{n\rightarrow\infty}y_{n}^{-1}\log n$
(see (\ref{eq:cond_for_En})). Then for $\eta$ sufficiently close
to $1$, an open set $\mathcal{B}\subset[0,\infty)^{m}$ can be constructed
such that
\[
\cup_{\lambda\geq1}(\lambda\mathcal{B})\subset\mathcal{B},\quad x_{\varepsilon}\in\mathcal{B}\setminus(\mathcal{B}\eta)\subset A^{o},\quad\textrm{and}
\]
\begin{equation}
\inf I(\mathcal{B}^{o})=\inf I(\bar{\mathcal{B}})\in(\inf I(A),I(x_{\varepsilon})]\label{eq:cond_B_i}
\end{equation}
as follows. The first two requirements on $\mathcal{B}$ are satisfied
by $\mathcal{B}'=\cup_{\lambda\geq1}(\lambda U)$ for some sufficiently
small neighbourhood $U\subset A^{o}$ of $x_{\varepsilon}$, with
$\eta>1$ close enough to $1$. If $\mathcal{B}'$ is a continuity
set of $I$, then set $\mathcal{B}=\mathcal{B}'$. Else, consider
the function $f:[0,\infty)^{m}\times[0,1]\rightarrow[0,\infty)^{m}$
defined by $f(y,a):=ay+(1-a)(\left\Vert y\right\Vert _{\infty}/\left\Vert x_{\varepsilon}\right\Vert _{\infty})x_{\varepsilon}$.
It satisfies $f(\mathcal{B}',1)=\mathcal{B}'$, $f(\mathcal{B}',0)=\mathcal{B}'\cap\cup_{\lambda>0}(\lambda x_{\varepsilon})$,
and $f(\mathcal{B}',a)\subset f(\mathcal{B}',a')$ if $a\leq a'$.
Therefore, $a\mapsto\inf I(f(\mathcal{B}',a))$ is nonincreasing,
so with $\alpha$ any of its continuity points in $(0,1)$, $\mathcal{B}=f(\mathcal{B}',\alpha)$
is a continuity set of $I$ and satisfies (\ref{eq:cond_B_i}). By
(\ref{eq:cond_B_i}),
\[
\hat{p}_{n}(Y\in dAy_{n})\geq\hat{p}_{n}(Y\in d\mathcal{B}y_{n})-\hat{p}_{n}(Y\in d\eta\mathcal{B}y_{n})
\]
\begin{equation}
=\hat{p}_{n}(Y\in d\mathcal{B}y_{n})(1-\textrm{e}^{\log\hat{p}_{n}(Y\in d\eta\mathcal{B}y_{n})-\log\hat{p}_{n}(Y\in d\mathcal{B}y_{n})})\label{eq:p_hat_B}
\end{equation}
and furthermore, (\ref{eq:sample_LDL-1}) continues to hold after
substituting $\mathcal{B}$ or $\mathcal{B}\eta$ for $\mathcal{A}$.
Therefore, by (\ref{eq:I_mexp_symm}), for every $\delta\in(0,\varDelta)$
almost surely, the right-hand side of (\ref{eq:p_hat_B}) is $\hat{p}_{n}(Y\in d\mathcal{B}y_{n})(1+o(1))$
uniformly in $d\in[\delta,\Delta]$ and furthermore, using (\ref{eq:cond_B_i}),
\begin{equation}
\liminf_{n\rightarrow\infty}\inf_{d\in[\delta,\Delta]}y_{n}^{-1}\log\hat{p}_{n}(Y\in dAy_{n})+dI(x_{\varepsilon})\geq0\quad a.s.\label{eq:sample_A_ineq_low}
\end{equation}
Now (\ref{eq:sample_LDL}) follows from (\ref{eq:sample_A_ineq_up})
and (\ref{eq:sample_A_ineq_low}), because $I(x_{\varepsilon})<\inf I(A)+\varepsilon$,
and $\varepsilon>0$ can be chosen arbitrarily close to $0$. Finally,
by (\ref{eq:sample_LDL-1}), as $\cup_{\lambda\geq1}(\mathcal{A}\lambda)\subset\mathcal{A}$,
\begin{equation}
\limsup_{n\rightarrow\infty}\sup_{d>\Delta}y_{n}^{-1}\log\hat{p}_{n}(Y\in d\mathcal{A}y_{n})\leq-\Delta\inf I(\mathcal{A})\quad a.s.\label{eq:sample_uboud-1}
\end{equation}
and because $A\subset\mathcal{A}$ and $\inf I(\mathcal{A})=\inf I(A)$,
(\ref{eq:sample_ubound}) follows.\end{svmultproof}

\begin{lemma}
\label{lem: for Bn}

Let $Y$ be a random vector on $[0,\infty)^{m}$ with standard exponential
marginals and $Y^{(1)},Y^{(2)},...$ a sequence of iid copies of $Y$.
Define $\hat{Y}_{n}^{(i)}:=(\hat{Y}_{1,n}^{(i)},..,\hat{Y}_{m,n}^{(i)})$
for $i=1,..,n$ with
\begin{equation}
\hat{Y}_{j,n}^{(i)}:=-\log(1-(R_{j,n}^{(i)}-{\scriptstyle \frac{1}{2}})/n).\label{eq:eq:Zhat_def}
\end{equation}
and $R_{j,n}^{(i)}:=\sum_{l=1}^{n}\mathbf{1}(Y_{j}^{(l)}\leq Y_{j}^{(i)})$.
For $(y_{n}>0)$ satisfying $\liminf_{n\rightarrow\infty}y_{n}/\log n>0$,
\begin{equation}
\sup_{\varepsilon>0}\limsup_{n\rightarrow\infty}y_{n}^{-1}\log\hat{p}_{n}\left(\bigl\Vert\hat{Y}_{n}-Y\bigr\Vert_{\infty}>y_{n}\varepsilon\right)=-\infty\quad a.s.\label{eq:forBn_claim}
\end{equation}
\end{lemma}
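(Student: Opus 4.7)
Plan. The plan is to prove the stronger pointwise statement that almost surely $\max_{1 \leq i \leq n}\bigl\|\hat{Y}_n^{(i)} - Y^{(i)}\bigr\|_\infty = o(y_n)$. Granted this, the event inside the probability in (\ref{eq:forBn_claim}) is empty for all large $n$ almost surely, for every $\varepsilon > 0$; hence $\hat{p}_n(\cdot) = 0$ and $y_n^{-1}\log\hat{p}_n(\cdot) = -\infty$ eventually. The inner $\limsup$ is then $-\infty$ a.s.\ for every $\varepsilon > 0$, so the supremum in (\ref{eq:forBn_claim}) equals $-\infty$.

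Reduction and key identity. A union bound over the $m$ coordinates in $\|\cdot\|_\infty$ reduces the task to the univariate case; drop the index $j$. Define $U^{(i)} := 1 - e^{-Y^{(i)}}$ (iid Uniform$(0,1)$) and $\bar{\mathbb{G}}_n(u) := n^{-1}\sum_{l=1}^n \mathbf{1}(U^{(l)} > u)$, the empirical survival function of the $U^{(l)}$. Since $(R_n^{(i)} - {\scriptstyle\frac{1}{2}})/n = 1 - \bar{\mathbb{G}}_n(U^{(i)}) - 1/(2n)$, the definition (\ref{eq:eq:Zhat_def}) gives
\[
\hat{Y}_n^{(i)} - Y^{(i)} = \log\frac{1 - U^{(i)}}{\bar{\mathbb{G}}_n(U^{(i)}) + 1/(2n)}.
\]
Because $\liminf y_n/\log n > 0$, it is enough to show $\max_i|\hat{Y}_n^{(i)} - Y^{(i)}| = o(\log n)$ a.s.

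Bulk--tail split. Let $c_n := (\log n)^2$ (any $c_n \to \infty$ with $c_n/\log n \to \infty$ would do) and partition the indices by whether $1 - U^{(i)} \geq c_n/n$ (bulk) or $< c_n/n$ (tail). For the bulk, \citet{Wellner} (Corollary 1), applied to the survival function, gives $\sup\{|\bar{\mathbb{G}}_n(u)/(1-u) - 1| : 1 - u \geq c_n/n\} \to 0$ a.s., so in particular $\bar{\mathbb{G}}_n(U^{(i)}) \geq c_n/(2n)$ eventually; the $1/(2n)$ term then adds at most $O(1/c_n)$ to the logarithm, and $\max_{i \in \mathrm{bulk}}|\hat{Y}_n^{(i)} - Y^{(i)}| \to 0$ a.s. For the tail, a Bernstein bound on $\#\{l : 1 - U^{(l)} < c_n/n\}$ (a binomial with mean $c_n$) shows there are at most $2c_n$ tail indices eventually a.s. Writing $V_{(j;n)}$ for the $j$-th smallest of $1 - U^{(1)}, \ldots, 1 - U^{(n)}$, a tail index of rank $n - j + 1$ contributes $\bigl|\log(nV_{(j;n)}/(j - {\scriptstyle\frac{1}{2}}))\bigr|$, and one needs
\[
\max_{1 \leq j \leq 2 c_n} \bigl|\log\bigl(nV_{(j;n)}/(j - {\scriptstyle\frac{1}{2}})\bigr)\bigr| = o(\log n) \quad\text{a.s.}
\]
By Rényi's representation $Y_{(n-j+1;n)} = -\log V_{(j;n)} = \sum_{m=j}^n \tilde{W}_m/m$ with $\tilde{W}_m$ iid standard exponential, one writes $Y_{(n-j+1;n)} - \log(n/(j-{\scriptstyle\frac{1}{2}}))$ as a centred sum of independent variables with variance $O(1/j)$ plus a deterministic $O(1/j)$ term; the classical LIL for such sums yields $\max_{j \leq 2 c_n}|\log(nV_{(j;n)}/(j-{\scriptstyle\frac{1}{2}}))| = O(\sqrt{\log\log n})$ a.s., hence $o(\log n)$.

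Combining the bulk and tail estimates gives $\max_i |\hat{Y}_n^{(i)} - Y^{(i)}| = o(\log n) = o(y_n)$ a.s., as required. The principal technical hurdle is the tail bound: a naive Chernoff plus Borel--Cantelli argument is only sharp enough to give $O(\log n)$, which matches the scale of $y_n$ under the assumed $\liminf y_n/\log n > 0$ and would therefore fail to deliver a limit of $-\infty$; the proof must exploit an LIL-type rate (via Rényi's representation and the Hartman--Wintner LIL, or directly via an LIL for the uniform quantile process) to sharpen the bound below $\log n$.
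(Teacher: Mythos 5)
Your high-level strategy matches the paper's exactly: show $\max_{i}\lVert\hat{Y}_n^{(i)}-Y^{(i)}\rVert_\infty=o(y_n)$ a.s., conclude the empirical probability is eventually zero, hence the normalised logarithm is $-\infty$. The reduction to the univariate case, the identity $\hat{Y}_n^{(i)}-Y^{(i)}=\log\bigl((1-U^{(i)})/(\bar{\mathbb{G}}_n(U^{(i)})+1/(2n))\bigr)$, and the bulk control via Wellner (Corollary 1) are all sound. However, your treatment of the tail indices has a genuine gap on two levels. First, the R\'enyi representation $Y_{(n-j+1;n)}=\sum_{m=j}^n\tilde{W}_m/m$ is a \emph{distributional} identity for each fixed $n$; the exponentials $\tilde{W}_m$ depend on $n$ and cannot be taken as a single fixed sequence across $n$, so the Hartman--Wintner LIL (or a fixed-sequence tail-sum LIL) cannot be invoked to control $\max_{j\leq 2c_n}$ as $n\to\infty$ without constructing an explicit coupling, which you do not supply. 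Second, even setting the coupling aside, the claimed rate $O(\sqrt{\log\log n})$ is wrong: for $j=1$, $\log(nV_{(1;n)})=\log n-Y_{(n;n)}$, and $Y_{(n;n)}-\log n$ converges to a Gumbel law whose right tail is exponential, not Gaussian, giving the well-known a.s. rate $\limsup_n(Y_{(n;n)}-\log n)/\log\log n=1$. So the correct almost-sure fluctuation scale is $O(\log\log n)$, not $O(\sqrt{\log\log n})$; this is precisely the result packaged in Theorems 2 and 3 of \citet{ShorackWellner1978}, and it is why the Kiefer-process LIL for the bulk of the uniform quantile process does not carry over to the extreme order statistics.

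Fortunately the flaw is in the cited tool, not the conclusion: $O(\log\log n)=o(\log n)$ still holds, so your overall plan can be repaired. The paper does exactly this repair, and more economically: rather than a bulk--tail split, it applies \citet{ShorackWellner1978} Theorems 2 and 3 directly over the full range $t\in[1/n,1]$ to obtain $\sup_{t\in[1/n,1]}\lvert\log(t/\Gamma_n^{-1}(t))\rvert=O(\log_2 n)$ a.s.\ in one stroke, and this is $o(y_n)$ because $y_n/\log_2 n\to\infty$. You should replace the R\'enyi/LIL argument in the tail by a citation of that result (or an equivalent LIL for extreme uniform order statistics, e.g.\ Robbins--Siegmund or Kiefer), correcting the rate from $\sqrt{\log\log n}$ to $\log\log n$; alternatively, drop the bulk--tail split entirely and follow the paper's single appeal to Shorack--Wellner.
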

\begin{svmultproof}
Since  $\hat{p}_{n}(\bigl\Vert\hat{Y}_{n}-Y\bigr\Vert{}_{\infty}>y_{n}\varepsilon)\leq\sum_{j=1}^{m}\hat{p}_{n}(|\hat{Y}_{j,n}-Y_{j}|>y_{n}\varepsilon)$,
it is sufficient to prove (\ref{eq:forBn_claim}) for the univariate
case. 

Let $\varGamma_{n}$ and $\varGamma_{n}^{-1}$ be the empirical distribution
function and quantile function of $\mathcal{U}^{(1)},..,\mathcal{U}^{(n)}$,
with $\mathcal{U}^{(i)}:=\exp(-Y^{(i)})$ uniformly distributed in
$(0,1)$ for every $i\in\mathbb{N}$. Because $\sup_{t\in[1/n,1]}|t/\varGamma_{n}^{-1}(t)|=\sup_{t\in[0,1]}|t^{-1}\varGamma_{n}(t)|$
(see \emph{e.g.} \citet{Wellner}), by Theorem 2 of \citet{ShorackWellner1978},
\begin{equation}
\sup_{t\in[1/n,1]}\log(t/\varGamma_{n}^{-1}(t))/\log_{2}n\rightarrow1\quad a.s.\label{eq:SWupp}
\end{equation}

Similarly, because $\sup_{t\in[1/n,1]}|t^{-1}\varGamma_{n}^{-1}(t)|\vee1=\sup_{t\in[\mathcal{U}_{1:n},1]}|t/\varGamma_{n}(t)|$,
by Theorem 3 of \citet{ShorackWellner1978},
\[
\sup_{t\in[1/n,1]}(t^{-1}\varGamma_{n}^{-1}(t))/\log_{2}n\rightarrow1\quad a.s.
\]
so
\begin{equation}
\inf_{t\in[1/n,1]}\log(t/\varGamma_{n}^{-1}(t))/\log_{2}n\rightarrow0\quad a.s.\label{eq:SWlow}
\end{equation}

Since $Y_{n-i+1:n}=-\log\varGamma_{n}^{-1}(i/n)$ for $i=1,..n$ and
$y_{n}/\log_{2}n\rightarrow\infty$, (\ref{eq:SWupp}) and (\ref{eq:SWlow})
imply, using (\ref{eq:eq:Zhat_def}),
\begin{equation}
\max_{i\in\{1,..,n\}}|Y_{n-i+1:n}-\hat{Y}_{n-i+1:n}|/y_{n}\rightarrow0\quad a.s.\label{eq:YminYhat}
\end{equation}

As a consequence, there is almost surely for every $\delta>0$ an
$n_{\delta}\in\mathbb{N}$ such that for all $\varepsilon\geq\delta$,
$\hat{p}_{n}(|Y-\hat{Y}_{n}|>y_{n}\varepsilon)=0$ for all $n\geq n_{\delta}$
and therefore, that $y_{n}^{-1}\log\hat{p}_{n}(|Y-\hat{Y}_{n}|>y_{n}\varepsilon)=-\infty$,
proving the univariate case of (\ref{eq:forBn_claim}). \end{svmultproof}

\begin{lemma}
\label{lem: For Cn}Let $X$ be a random vector on $\mathbb{R}^{m}$
having continuous marginals satisfying log-GW tail limits, and let
$X^{(1)},X^{(2)},...$ be a sequence of iid copies of $X$. With $Q$,
$\hat{Q}_{n}$ and $\hat{Y}_{n}$ defined by (\ref{eq:Qdef}), (\ref{eq:Q_hat})
and (\ref{eq:Zhat_def_curly}), let $(k_{n})$ satisfy (\ref{eq:k0_for_estimator})
and $\hat{q}_{j,n}$ defined by (\ref{eq:q_hat}) satisfy (\ref{eq:nu_conv-0})
for $j=1,..,m$, with $y_{n}$ defined by (\ref{eq:def_y_n}). Then
for every $\delta>0$ and $\varepsilon>0$, 
\begin{equation}
\lim_{n\rightarrow\infty}\sup_{l\geq\delta}y_{n}^{-1}\log\hat{p}_{n}\left(\bigl\Vert Q^{-1}\hat{Q}_{n}(\hat{Y}_{n}l^{-1})-\hat{Y}_{n}l^{-1}\bigr\Vert_{\infty}>y_{n}\varepsilon\right)=-\infty\quad a.s.\label{eq:marg_estimator_conv}
\end{equation}
\end{lemma}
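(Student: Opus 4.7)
\begin{svmultproof}
The plan is to show that the event inside $\hat{p}_{n}$ becomes empty for all data indices $i$ and all $l\geq\delta$ once $n$ is large enough, so $\hat{p}_{n}(\cdot)=0$ and its logarithm equals $-\infty$ with no residual $l$-dependence. This reduces the multivariate claim to an almost sure uniform control, over a suitable deterministic range of $z$, of the relative quantile error $\hat{\nu}_{j,n}(z)$ defined in (\ref{eq:nu_def}).

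Componentwise, $q_{j}^{-1}\hat{q}_{j,n}(z)-z=z\hat{\nu}_{j,n}(z)$, so the $j$-th entry of $Q^{-1}\hat{Q}_{n}(\hat{Y}_{n}^{(i)}/l)-\hat{Y}_{n}^{(i)}/l$ equals $(\hat{Y}_{j,n}^{(i)}/l)\,\hat{\nu}_{j,n}(\hat{Y}_{j,n}^{(i)}/l)$. By (\ref{eq:Zhat_def_curly}) one has $\hat{Y}_{j,n}^{(i)}\leq\log(2n)$, and $l\geq\delta$ forces the argument of $\hat{\nu}_{j,n}$ into $[0,\log(2n)/\delta]$. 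By (\ref{eq:k0_for_estimator}) some $\Lambda\geq1$ depending on $\delta$ satisfies $\log(2n)/\delta\leq\Lambda y_{n}$ eventually, so it suffices to prove $\max_{j}\sup_{z\in[0,\Lambda y_{n}]}y_{n}^{-1}|z\hat{\nu}_{j,n}(z)|\to0$ a.s.

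I split at $z=y_{n}$. On $[y_{n},\Lambda y_{n}]$, $\hat{q}_{j,n}$ is the log-GW extrapolation and assumption (\ref{eq:nu_conv-0}) directly gives $\sup_{\lambda\in[1,\Lambda]}|\hat{\nu}_{j,n}(y_{n}\lambda)|\to0$ a.s.; multiplying by the bounded prefactor $z/y_{n}\leq\Lambda$ yields the desired decay. On $[0,y_{n}]$, $\hat{q}_{j,n}(z)=X_{j,r:n}$ with $r=\lfloor n(1-e^{-z})\rfloor+1$, and continuity of $F_{j}$ gives $q_{j}^{-1}(X_{j,r:n})=Y_{j,r:n}$. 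I compare $z$ to $Y_{j,r:n}$ through the rank-based proxy $\hat{Y}_{j,r:n}=-\log(1-(r-{\scriptstyle \frac{1}{2}})/n)$: the arithmetic gap $|z-\hat{Y}_{j,r:n}|$ is of order $e^{z}/n\leq1/k_{n}$ uniformly in $z\in[0,y_{n}]$, hence $o(y_{n})$, while $|\hat{Y}_{j,r:n}-Y_{j,r:n}|/y_{n}\to0$ uniformly over all ranks by the a.s.\ bound (\ref{eq:YminYhat}) obtained in the proof of Lemma~\ref{lem: for Bn}.

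The main obstacle is the small-$z$ regime: hypothesis (\ref{eq:nu_conv-0}) only controls $\hat{\nu}_{j,n}$ above the threshold $y_{n}$, whereas $\hat{Y}_{j,n}^{(i)}/l$ can lie anywhere in $[0,\Lambda y_{n}]$ for large $l$. Bridging this gap via the rank-based identity for empirical quantiles and the uniform rank consistency (\ref{eq:YminYhat}) is the key ingredient; once both regimes are handled, the indicator inside $\hat{p}_{n}$ vanishes identically for large $n$, uniformly in $l\geq\delta$, and the claim follows.
\end{svmultproof}
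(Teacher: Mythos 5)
Your proposal is correct and takes essentially the same route as the paper's proof: it reduces the claim to showing the event inside $\hat{p}_{n}$ is eventually empty uniformly in $l\geq\delta$, reduces to the univariate case, notes via (\ref{eq:k0_for_estimator}) that the arguments $\hat{Y}_{j,n}^{(i)}/l$ lie in $[0,\Lambda y_{n}]$ for a suitable $\Lambda$, and then splits at $z=y_{n}$, handling $[y_{n},\Lambda y_{n}]$ by the hypothesis (\ref{eq:nu_conv-0}) and $[0,y_{n}]$ by uniform control of the empirical quantile error. The only stylistic divergence is in the empirical regime: the paper writes $q^{-1}\hat{q}_{n}(z)=-\log\varGamma_{n}^{-1}(\mathrm{e}^{-z})$ and applies the Shorack--Wellner bounds (\ref{eq:SWupp})--(\ref{eq:SWlow}) directly to $\sup_{t\in[\mathrm{e}^{-y_{n}},1]}|\log(t/\varGamma_{n}^{-1}(t))|/y_{n}$, whereas you route the same control through the rank-proxy bound (\ref{eq:YminYhat}) from the proof of Lemma~\ref{lem: for Bn} plus a deterministic $O(1/k_{n})$ bound on the floor/half-integer discrepancy $|z-\hat{Y}_{j,r:n}|$. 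Since (\ref{eq:YminYhat}) is itself derived from the same Shorack--Wellner bounds, the two arguments share the same underlying ingredient and differ only in packaging.
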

\begin{svmultproof}
Fix $\varepsilon>0$ and $\delta>0$. As in Lemma \ref{lem: for Bn},
we only need to prove (\ref{eq:marg_estimator_conv}) for the univariate
case, so we proceed with this. Note that (\ref{eq:marg_estimator_conv})
holds if an $n_{\delta,\varepsilon}\in\mathbb{N}$ exists such that
(suppressing the labels of vector components in the univariate case)
\begin{equation}
\sup_{l\geq\delta}\sup_{j\in\{1,..,n\}}|q^{-1}\hat{q}_{n}(\hat{Y}_{j:n}l^{-1})-\hat{Y}_{j:n}l^{-1}|\leq y_{n}\varepsilon\quad\forall n\geq n_{\delta,\varepsilon}\label{eq:ubound_QinvQhat-Id}
\end{equation}

Fixing $\Lambda>\max(1,\delta^{-1})/(1-c)\geq\max(1,\delta^{-1})\limsup_{n\rightarrow\infty}\log(2n)/y_{n}$
with $c$ as in (\ref{eq:k0_for_estimator}), (\ref{eq:ubound_QinvQhat-Id})
holds for some $n_{\delta,\varepsilon}\in\mathbb{N}$ if
\begin{equation}
\sup_{z\in[0,y_{n}\Lambda]}|q^{-1}\hat{q}_{n}(z)-z|\leq y_{n}\varepsilon\quad\forall n\geq n_{\delta,\varepsilon},\label{eq:qG_QinvQhat-Id}
\end{equation}
which is true if $\hat{\nu}_{n}$ defined by (\ref{eq:nu_def}) satisfies
\begin{equation}
\sup_{z\in[y_{n},y_{n}\Lambda]}|\hat{\nu}_{n}(z)|\rightarrow0\label{eq:nu_conv}
\end{equation}
and also
\begin{equation}
\sup_{t\in[\textrm{e}^{-y_{n}},1]}\bigl|\log(t/\varGamma_{n}^{-1}(t))\bigr|/y_{n}\rightarrow0,\label{eq:ynu_conv}
\end{equation}
with $\varGamma_{n}^{-1}$ the empirical quantile function of $\mathcal{U}^{(1)},..,\mathcal{U}^{(n)}$
as in the proof of Lemma \ref{lem: for Bn} (note that $q^{-1}\hat{q}_{n}(z)=-\log\varGamma_{n}^{-1}(\textrm{e}^{-z})$
for all $z\in[0,y_{n}]$). As in the proof of Lemma \ref{lem: for Bn},
(\ref{eq:SWupp}) and (\ref{eq:SWlow}) hold. Therefore, since the
upper bound in (\ref{eq:k0_for_estimator}) implies that $\liminf_{n\rightarrow\infty}y_{n}/\log n>0$,
(\ref{eq:ynu_conv}) holds almost surely. Moreover, by (\ref{eq:nu_conv-0}),
(\ref{eq:nu_conv}) holds almost surely. This proves the univariate
case.\end{svmultproof}

\begin{lemma}
\label{lem: sampling_LDL}Let the random vector X on $\mathbb{R}^{m}$
have continuous marginals satisfying log-GW tail limits and let $Y$
defined by (\ref{eq:Ydef}) satisfy the LDP (\ref{eq:LDP_mexp}) with
good rate function $I$. Let $X^{(1)},X^{(2)},...$ be a sequence
of iid copies of $X$. Let $(k_{n})$ satisfy (\ref{eq:k0_for_estimator})
and let the quantile estimator $\hat{q}_{j,n}$ given by (\ref{eq:q_hat})
satisfy (\ref{eq:nu_conv-0}). Let the Borel set $A\subset[0,\infty)^{m}$
be a continuity set of $I$ satisfying $\inf I(A)\in(0,\infty)$.
Then $\hat{\mu}$ defined by (\ref{eq:mu_hat_def}) satisfies for
every $\Lambda>1$ and every
\begin{equation}
\Delta\in\Bigl(0,\frac{1}{(1-c')\inf I(A)}\Bigr)\label{eq:Delta_condition}
\end{equation}
and $\delta\in(0,\Delta)$: 
\begin{equation}
\lim_{n\rightarrow\infty}\sup_{\lambda\in[\Lambda^{-1},\Lambda],\: d\in[\delta,\Delta]}\bigl|y_{n}^{-1}\log\hat{\mu}_{n,d/\lambda}(A\lambda)+d\inf I(A)\bigr|=0\quad a.s.\label{eq:Claim_random}
\end{equation}
and\textup{
\begin{equation}
\limsup_{n\rightarrow\infty}\sup_{\lambda\in[\Lambda^{-1},\Lambda],\: d>\Delta}y_{n}^{-1}\log\hat{\mu}_{n,d/\lambda}(A\lambda)\leq-\Delta\inf I(A)\quad a.s.\label{eq:Claim_random_ubound}
\end{equation}
}\end{lemma}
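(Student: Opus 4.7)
\begin{svmultproof}
The plan is to reduce the perturbed empirical probability $\hat{\mu}_{n,d/\lambda}(A\lambda)$ to an empirical probability of the ``clean'' event $\{Y\in y_n d A\}$ on which Lemma \ref{lem: for En} bites directly, controlling the perturbation with Lemmas \ref{lem: for Bn} and \ref{lem: For Cn}. Writing $l:=d/\lambda$, the injectivity of $Q$ gives
\begin{equation*}
\hat{\mu}_{n,d/\lambda}(A\lambda) = \hat{p}_n(Z_n \in y_n\lambda A), \qquad Z_n^{(i)} := Q^{-1}\hat{Q}_n(\hat{Y}_n^{(i)}/l).
\end{equation*}
In the ``ideal'' case $\hat Q_n=Q$, $\hat Y_n=Y$, one has $Z_n=Y/l$ and the event reduces to $\{Y\in y_n l\lambda A\} = \{Y\in y_n d A\}$, which depends on $\lambda,d$ only through the product $l\lambda=d$; this accounts for the uniformity in $\lambda$.

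For the actual $Z_n$, decompose
\begin{equation*}
Z_n - Y/l = \bigl(Q^{-1}\hat{Q}_n(\hat{Y}_n/l) - \hat{Y}_n/l\bigr) + (\hat{Y}_n - Y)/l .
\end{equation*}
Since $(\lambda,d)\in[\Lambda^{-1},\Lambda]\times[\delta,\infty)$ forces $l\geq\delta/\Lambda>0$, Lemma \ref{lem: For Cn} controls the first summand and Lemma \ref{lem: for Bn} (with error $\varepsilon l$, uniformly in $l$ above a fixed lower bound) controls the second. Hence for any $\varepsilon_0>0$,
\begin{equation*}
\sup_{\lambda\in[\Lambda^{-1},\Lambda],\,d\geq\delta} y_n^{-1}\log \hat{p}_n\bigl(\|Z_n/y_n - Y/(y_n l)\|_\infty > \varepsilon_0\bigr) \to -\infty \quad \text{a.s.}
\end{equation*}
On the complement of this negligible event, using $\lambda A^{(\pm\varepsilon_0)}=(\lambda A)^{(\pm\lambda\varepsilon_0)}$ and $y_n l\lambda=y_n d$,
\begin{equation*}
\bigl\{Y\in y_n d\,A^{(-\varepsilon_0)}\bigr\} \subset \bigl\{Z_n\in y_n\lambda A\bigr\} \subset \bigl\{Y\in y_n d\,A^{(\varepsilon_0)}\bigr\},
\end{equation*}
where $A^{(\pm\varepsilon_0)}$ denote the inner/outer $\varepsilon_0$-perturbations. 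Because the ``bad'' empirical mass is of order $e^{-\omega(y_n)}$ while the sandwich terms are of order $e^{-O(y_n)}$, the bad event is absorbed into $o(1)$ on the $y_n^{-1}\log$ scale.

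It remains to transfer Lemma \ref{lem: for En} from $A$ to $A^{(\pm\varepsilon_0)}$. The obstacle is that $A^{(\pm\varepsilon_0)}$ need not be continuity sets of $I$, which is the explicit hypothesis of Lemma \ref{lem: for En}. I would repeat the construction used in that proof to build $\mathcal B$: for any $\eta>0$ take a point $x^*\in A^o$ with $I(x^*)<\inf I(A)+\eta$, and for small enough $\varepsilon_0$ both a neighbourhood of $x^*$ lying in $A^{(-\varepsilon_0)}$ and a suitable enlargement of $A^{(\varepsilon_0)}$ can be shaped (by the flattening trick used for $\mathcal B$) into continuity sets $A_\eta^-\subset A^{(-\varepsilon_0)}$ and $A_\eta^+\supset A^{(\varepsilon_0)}$ with $\inf I(A_\eta^\pm)\in(\inf I(A)-\eta,\inf I(A)+\eta)$; here the continuity-set hypothesis on $A$, together with lower semicontinuity of $I$ and goodness (compact level sets), are what drive $\inf I(A^{(\pm\varepsilon_0)})\to\inf I(A)$ as $\varepsilon_0\downarrow0$. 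Choosing $\eta$ small enough that $\Delta<((1-c')(\inf I(A)+\eta))^{-1}$ still holds — possible by (\ref{eq:Delta_condition}) — Lemma \ref{lem: for En} applied to $A_\eta^\pm$ provides (\ref{eq:sample_LDL}) uniformly in $d\in[\delta,\Delta]$ and (\ref{eq:sample_ubound}) uniformly in $d>\Delta$. Combining with the sandwich and letting $\eta\downarrow0$ collapses both bounds to $-d\inf I(A)$, yielding (\ref{eq:Claim_random}); the corresponding conclusion for $d>\Delta$, using only the upper half of the sandwich and (\ref{eq:sample_ubound}) applied to $A_\eta^+$, gives (\ref{eq:Claim_random_ubound}). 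The principal technical hurdle is this continuity-set approximation of $A^{(\pm\varepsilon_0)}$; the rest is careful bookkeeping of the $y_n$, $l$, $\lambda$ scalings.
\end{svmultproof}
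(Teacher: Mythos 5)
Your proof follows essentially the same strategy as the paper: rewrite $\hat{\mu}_{n,d/\lambda}(A\lambda)$ as the empirical mass of $\{Z_n \in y_n\lambda A\}$ for $Z_n = Q^{-1}\hat{Q}_n(\hat{Y}_n/l)$, isolate the ``bad'' events where $Z_n$ deviates from $Y/l$ by more than $\varepsilon_0 y_n$ (controlled to super-exponential order by Lemmas~\ref{lem: for Bn} and~\ref{lem: For Cn}), sandwich the good part between $\{Y\in y_n d A^{(-\iota)}\}$ and $\{Y\in y_n d A^{(\iota)}\}$, and then apply Lemma~\ref{lem: for En} to the thickened/thinned sets, letting the thickness go to zero. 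That is exactly the architecture of the paper's proof, with its events $C_{a,b,n}$, $E_{d,\iota,n}$, $D_{\lambda,d,n}$.

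Where you go astray is in the mechanism for making the thickened/thinned sets into continuity sets of $I$ so that Lemma~\ref{lem: for En} applies. You propose to ``repeat the construction used in that proof to build $\mathcal B$'', i.e. the flattening map $f(y,a)=ay+(1-a)(\Vert y\Vert_\infty/\Vert x_\varepsilon\Vert_\infty)x_\varepsilon$. That map is monotone \emph{decreasing} in the sets: $f(\mathcal B',a)\subset f(\mathcal B',a')$ for $a\leq a'$, so it only ever produces \emph{subsets} of $\mathcal B'$. It can furnish an inner approximation $A_\eta^-\subset A^{(-\varepsilon_0)}$, but it cannot produce the claimed $A_\eta^+\supset A^{(\varepsilon_0)}$ — an ``enlargement'' is precisely what the flattening trick cannot do, so the upper-bound half of your sandwich is not supplied by it. The paper's route is different and cleaner: it invokes Lemma 4.1.6 of Dembo \& Zeitouni to obtain
\begin{equation*}
\lim_{\iota\downarrow 0}\inf I(A^{\iota})=\inf I(\bar A)=\inf I(A^{o})=\lim_{\iota\downarrow 0}\inf I(A^{-\iota}),
\end{equation*}
and then, since $\iota\mapsto\inf I(A^{\iota})$ is monotone, picks $\iota$ in the (dense, co-countable) set of its continuity points, which makes $A^{\iota}$ and $A^{-\iota}$ continuity sets directly. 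You do identify goodness and lower semicontinuity as the driving facts, but without the monotone-function argument (or its D\&Z form) your construction of the outer continuity set is missing. Fix this and the rest of your proposal goes through as a valid reconstruction of the paper's argument.
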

\begin{svmultproof}
With $\{prop\}$ denoting the subset of $\Omega$ satisfying the proposition
$prop$, consider
\begin{equation}
\begin{array}{c}
C_{a,b,n}^{(i)}:=\{\sup_{l\geq a}\bigl\Vert Q^{-1}\hat{Q}_{n}(\hat{Y}_{n}^{(i)}l^{-1})-Y^{(i)}l^{-1}\bigr\Vert_{\infty}>y_{n}b\}.\end{array}\label{eq:def_C}
\end{equation}
for $i=1,..,n$, which are elements of $\mathcal{F}_{n}$. Following
(\ref{eq:p_hat_def}), we can define empirical probabilities $\hat{p}_{n}(C_{a,b,n}):=n^{-1}\Sigma_{i\in\{1,..,n\}}\mathbf{1}(C_{a,b,n}^{(i)})$.
Combining Lemmas \ref{lem: for Bn} and \ref{lem: For Cn} gives
\begin{equation}
\lim_{n\rightarrow\infty}y_{n}^{-1}\log\hat{p}_{n}(C_{a,b,n})=-\infty\quad a.s.\quad\forall a,b>0.\label{eq:C_bnd}
\end{equation}

For every $S\subset\mathbb{R}^{m}$ and $\iota>0$, let $S^{\iota}:=\{x\in\mathbb{R}^{m}:\:\inf_{x'\in S}\bigl\Vert x-x'\bigr\Vert_{\infty}\leq\iota\}$
(closed), and $S^{-\iota}:=\{x\in\mathbb{R}^{m}:\:\inf_{x'\in S^{c}}\bigl\Vert x-x'\bigr\Vert_{\infty}>\iota\}$
(open). Set $S^{0}:=S$. Since $I$ is a good rate function, Lemma
4.1.6 of \citet{Dembo=000026Zeitouni} implies 
\begin{equation}
\lim_{\iota\downarrow0}\inf I(A{}^{\iota})=\inf I(\bar{A})=\inf I(A^{o})=\inf I(\cup_{\iota>0}A{}^{-\iota})=\lim_{\iota\downarrow0}\inf I(A{}^{-\iota}),\label{eq:lbl}
\end{equation}
so the nonincreasing function $\iota\mapsto\inf I(A^{\iota})$ is
continuous in $(-\iota_{0},\iota_{0})$ for some $\iota_{0}>0$, and
therefore, $A{}^{\iota}$ is a continuity set of $I$ for every $\iota\in(-\iota_{0},\iota_{0})$.
Moreover, by (\ref{eq:Delta_condition}), there exist $\varepsilon>0$
and $\iota_{1}\in(0,\iota_{0})$ such that $\inf I(A{}^{-\iota})\leq\inf I(A)+\varDelta^{-1}\varepsilon<\varDelta^{-1}(1-c')^{-1}$
for all $\iota\in[0,\iota_{1}]$. Therefore, for
\begin{equation}
\begin{array}{c}
E_{d,\iota,n}^{(i)}:=\{Y^{(i)}\in dy_{n}A{}^{\iota}\}\in\mathcal{F}_{n},\quad i=1,..,n,\end{array}\label{eq:def_E}
\end{equation}
Lemma \ref{lem: for En} implies for $\Delta$ satisfying (\ref{eq:Delta_condition})
and every $\delta\in(0,\Delta)$ that
\begin{equation}
\lim_{n\rightarrow\infty}\sup_{d\in[\delta,\Delta]}\bigl|y_{n}^{-1}\log\hat{p}_{n}(E_{d,\iota,n})+d\inf I(A^{\iota})\bigr|=0\quad a.s.\quad\forall\iota\in[-\iota_{1},\iota_{1}].\label{eq:E_lim}
\end{equation}

Therefore, 
\begin{equation}
\liminf_{n\rightarrow\infty}\inf_{d\in[\delta,\Delta]}y_{n}^{-1}\log\hat{p}_{n}(E_{d,-\iota,n})\geq-(1-c')^{-1}\quad\forall\iota\in[0,\iota_{1}]\quad a.s.\label{eq:E_bnd}
\end{equation}

Let 
\begin{equation}
D_{\lambda,d,n}^{(i)}:=\{\hat{Q}_{n}(\hat{Y}_{n}^{(i)}\lambda/d)\in Q(y_{n}A\lambda)\}\in\mathcal{F}_{n},\quad i=1,..,n.\label{eq:def_D}
\end{equation}

By (\ref{eq:def_E}), (\ref{eq:def_C}) and (\ref{eq:def_D}), we
have for all $d\geq\delta$, $\lambda\in[\Lambda^{-1},\Lambda]$ and
$\iota>0$ that $E_{d,-\iota,n}^{(i)}\cap(C_{d/\lambda,\iota\lambda,n}^{(i)})^{c}\subset D_{\lambda,d,n}^{(i)}$
and therefore, $\hat{p}_{n}(D_{\lambda,d,n})\geq\hat{p}_{n}(E_{d,-\iota,n})-\hat{p}_{n}(C_{\delta/\Lambda,\iota/\Lambda,n})$.
Therefore, for all $\iota\in(0,\iota_{1}]$,
\[
\liminf_{n\rightarrow\infty}\inf_{d\in[\delta,\Delta],\:\lambda\in[\Lambda^{-1},\Lambda]}y_{n}^{-1}\log\hat{p}_{n}(D_{\lambda,d,n})-y_{n}^{-1}\log\hat{p}_{n}(E_{d,-\iota,n})
\]
\begin{equation}
\geq\liminf_{n\rightarrow\infty}y_{n}^{-1}\log\left(1-\textrm{e}^{\log\hat{p}_{n}(C_{\delta/\Lambda,\iota/\Lambda,n})-\inf_{d\in[\delta,\Delta]}\log\hat{p}_{n}(E_{d,-\iota,n})}\right)\geq0\quad a.s.,\label{eq:DE_ineq}
\end{equation}
the last inequality following from (\ref{eq:C_bnd}) and (\ref{eq:E_bnd}).
Therefore, by (\ref{eq:E_lim}) and (\ref{eq:lbl}),
\begin{equation}
\liminf_{n\rightarrow\infty}\inf_{d\in[\delta,\Delta],\:\lambda\in[\Lambda^{-1},\Lambda]}y_{n}^{-1}\log\hat{p}_{n}(D_{\lambda,d,n})+d\inf I(A)\geq0\quad a.s.\label{eq:pnZ_lLDP}
\end{equation}

For all $d\geq\delta$, $\lambda\in[\Lambda^{-1},\Lambda]$ and $\iota>0$,
we have $D_{\lambda,d,n}^{(i)}\cap(C_{\delta/\Lambda,\iota/\Lambda,n}^{(i)})^{c}\subset E_{d,\iota,n}^{(i)}$,
so $D_{\lambda,d,n}^{(i)}\subset E_{d,\iota,n}^{(i)}\cup C_{\delta/\Lambda,\iota/\Lambda,n}^{(i)}$
and
\begin{equation}
\hat{p}_{n}(D_{\lambda,d,n})\leq2\max(\hat{p}_{n}(C_{\delta/\Lambda,\iota/\Lambda,n}),\hat{p}_{n}(E_{d,\iota,n})).\label{eq:pDbound}
\end{equation}

Therefore, by (\ref{eq:C_bnd}), (\ref{eq:E_lim}) and (\ref{eq:lbl}),
\begin{equation}
\limsup_{n\rightarrow\infty}\sup_{d\in[\delta,\Delta],\:\lambda\in[\Lambda^{-1},\Lambda]}y_{n}^{-1}\log\hat{p}_{n}(D_{\lambda,d,n})+d\inf I(A)\leq0\quad a.s.,\label{eq:pnZ_uLDP_iota}
\end{equation}
so with (\ref{eq:pnZ_lLDP}) and (\ref{eq:mu_hat_def}), (\ref{eq:Claim_random})
is obtained. By Lemma \ref{lem: for En}, with $\iota_{0}$ as above,
\begin{equation}
\limsup_{n\rightarrow\infty}\sup_{d>\Delta}y_{n}^{-1}\log\hat{p}_{n}(E_{d,\iota,n})\leq-\Delta\inf I(A^{\iota})\quad a.s.\quad\forall\iota\in[0,\iota_{0}).\label{eq:E_bnd_upp}
\end{equation}

Then (\ref{eq:Claim_random_ubound}) follows from (\ref{eq:pDbound})
using (\ref{eq:C_bnd}), (\ref{eq:E_bnd_upp}) and (\ref{eq:lbl}). \end{svmultproof}

\begin{acknowledgements}
The author is grateful to the Associate Editor and two anonymous Referees,
John Einmahl and Laurens de Haan for their comments and suggestions,
which made the manuscript much better. The support of Rijkswaterstaat
by making the oceanographic data available is gratefully acknowledged. 
\end{acknowledgements}

\noindent \begin{flushleft}
\textbf{\small{}Conflict of Interest }{\small{}The author declares
that he has no conflict of interest.}
\par\end{flushleft}{\small \par}

\end{document}